\definecolor{Thistle}{rgb}{0.847,0.749,0.847}
\definecolor{Khaki}{rgb}{0.941,0.902,0.549}
\definecolor{Orchid}{rgb}{0.855,0.439,0.839}
\definecolor{MediumOrchid}{rgb}{0.729,0.333,0.827}
\definecolor{brown}{rgb}{0.8,0.5,0}
\definecolor{LightBrown}{rgb}{0.8,0.2,0.4}
\definecolor{DarkGray}{rgb}{0.78,0.78,0.78}
\definecolor{DarkMidGray}{rgb}{0.81,0.81,0.81}
\definecolor{MidGray}{rgb}{0.85,0.85,0.85}
\definecolor{LightGray}{rgb}{0.88,0.88,0.88}
\definecolor{VeryLightGray}{rgb}{0.96,0.96,0.96}
\definecolor{GrayA}{rgb}{0.7,0.7,0.7}
\definecolor{GrayB}{rgb}{0.78,0.78,0.78}
\definecolor{GrayC}{rgb}{0.80,0.80,0.80}
\definecolor{GrayD}{rgb}{0.82,0.82,0.82}
\definecolor{GrayE}{rgb}{0.84,0.84,0.84}
\definecolor{GrayF}{rgb}{0.86,0.86,0.86}
\definecolor{GrayG}{rgb}{0.88,0.88,0.88}
\definecolor{GrayH}{rgb}{0.90,0.90,0.90}
\definecolor{GrayI}{rgb}{0.92,0.92,0.92}
\definecolor{GrayJ}{rgb}{0.94,0.94,0.94}
\definecolor{VeryLightBlue}{rgb}{0.9,0.9,1}
\definecolor{LightBlue}{rgb}{0.8,0.8,1}
\definecolor{MidBlue}{rgb}{0.5,0.5,1}
\definecolor{DarkBlue}{rgb}{0,0,0.6}
\definecolor{Gold}{rgb}{1,0.843,0}
\definecolor{LightGreen}{rgb}{0.88,1,0.88}
\definecolor{MidGreen}{rgb}{0.6,1,0.6}
\definecolor{DarkGreen}{rgb}{0,0.6,0}
\definecolor{VeryLightYellow}{rgb}{1,1,0.9}
\definecolor{LightYellow}{rgb}{1,1,0.6}
\definecolor{MidYellow}{rgb}{1,1,0.5}
\definecolor{DarkYellow}{rgb}{1,1,0.2}
\definecolor{VeryLightRed}{rgb}{1,0.9,0.9}
\definecolor{LightRed}{rgb}{1,0.8,0.8}
\definecolor{MidRed}{rgb}{1,0.55,0.55}
\long\def\delete#1{}
\newtheorem{theorem}{Theorem}[section]
\newtheorem{lemma}[theorem]{Lemma}
\newtheorem{corollary}[theorem]{Corollary}
\newtheorem{definition}{Definition}[section]
\newtheorem{conjecture}[theorem]{Conjecture}
\newcommand{\be}{\begin{equation}}
\newcommand{\ee}{\end{equation}}
\newcommand{\bea}{\begin{eqnarray}}
\newcommand{\eea}{\end{eqnarray}}
\newcommand{\bean}{\begin{eqnarray*}}
\newcommand{\eean}{\end{eqnarray*}}
\title{Hadwiger's Conjecture for Squares of $2$-trees}
\author{L. Sunil Chandran$^{1}$\thanks{Part of the work was done when this author was visiting Max Planck Institute for Informatics, Saarbruecken, Germany supported by Alexander von Humboldt Fellowship.}, Davis Issac$^{2}$ and Sanming Zhou$^{3}$\thanks{Research supported by ARC Discovery Project DP120101081.} \\ \\
{\small $^{1}$ Indian Institute of Science, Bangalore -560012, India} \\ 
{\small \texttt{sunil@csa.iisc.ernet.in}} \smallskip \\
{\small $^{2}$ Max Planck Institute for Informatics, Saarland Informatics Campus, Germany} \\ 
{\small \texttt{dissac@mpi-inf.mpg.de}} \smallskip \\
{\small $^{3}$ School of Mathematics and Statistics, The University of Melbourne, Parkville, VIC 3010, Australia}\\ 
{\small \texttt{sanming@unimelb.edu.au}}}
\date{}
\begin{document}
\openup 0.4\jot
\maketitle 

\begin{abstract}
Hadwiger's conjecture asserts that any graph contains a clique minor with order no less than the chromatic number of the graph. We prove that this well-known conjecture is true for all graphs if and only if it is true for squares of split graphs. This observation implies that Hadwiger's conjecture for squares of chordal graphs is as difficult as the general case, since chordal graphs are a superclass of split graphs. Then we consider 2-trees which are a subclass of each of planar graphs, 2-degenerate graphs and chordal graphs. We prove that Hadwiger's conjecture is true for squares of $2$-trees. We achieve this by proving the following stronger result: for any $2$-tree $T$, its square $T^2$ has a clique minor of order $\chi(T^2)$ for which each branch set induces a path, where $\chi(T^2)$ is the chromatic number of $T^2$.

\emph{Key words}: Hadwiger's conjecture; minors; split graph; chordal graph; 2-tree; generalized 2-tree; square of a graph

\emph{AMS subject classification}: 05C15, 05C83
\end{abstract}

\section{Introduction} 
\label{sec:introduction}

A graph $H$ is called a \emph{minor} of a graph $G$ if a graph isomorphic to $H$ can be obtained from a subgraph of $G$ by contracting edges. An \emph{$H$-minor} is a minor isomorphic to $H$, and a \emph{clique minor} is a $K_t$-minor for some positive integer $t$, where $K_t$ is the complete graph of order $t$. The \emph{Hadwiger number} of $G$, denoted by $\eta(G)$, is the largest integer $t$ such that $G$ contains a $K_t$-minor. A graph is called \emph{$H$-minor free} if it does not contain an $H$-minor. 
The \emph{chromatic number} of $G$, denoted by $\chi(G)$, is the least positive integer $k$ such that $G$ is \emph{$k$-colorable}, in the sense that $k$ colors are sufficient to color the vertices of $G$ such that adjacent vertices receive different colors. 

In 1937, Wagner \cite{wagner} proved that the Four Color Conjecture is equivalent to the following statement: If a graph is $K_5$-minor free, then it is $4$-colorable. In 1943, Hadwiger \cite{hadwiger1943klassifikation} proposed the following conjecture which is a far reaching generalization of the Four Color Theorem.
\begin{conjecture}
For any integer $t \ge 1$, every $K_{t+1}$-minor free graph is $t$-colorable; that is, $\eta(G) \ge \chi(G)$ for any graph $G$. 	
\end{conjecture}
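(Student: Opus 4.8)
The ``final statement'' above is Hadwiger's conjecture itself, which remains one of the central open problems in graph theory; the present paper does not prove it, but instead (i) reduces it to the subfamily of squares of split graphs --- equivalently squares of chordal graphs --- and (ii) verifies it for squares of ($2$-)trees. Accordingly, what follows is not a proof but a sketch of the only route by which a proof could plausibly be organised, together with an account of where the known obstruction sits.

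\textbf{Low values of $t$ and the general programme.} For $t\le 3$ the conjecture is elementary. The cases $t=4$ and $t=5$ are, via Wagner's excluded-minor analysis, equivalent to the Four Colour Theorem, and $t=6$ follows from the Four Colour Theorem combined with the Robertson--Seymour--Thomas structure theorem for $K_6$-minor-free graphs; $t\ge 7$ is open. The natural general attack mirrors the graph-minors programme: (a) establish a structure theorem expressing every $K_{t+1}$-minor-free graph as a bounded clique-sum of pieces each of which is of bounded size, or embeddable in a surface of bounded genus, or obtained from such an embeddable graph by adding a bounded apex set and bounded-width vortices; (b) show that chromatic number is governed by clique-sum decompositions, so $\chi$ of the whole graph equals the maximum of $\chi$ over the pieces (this part is genuinely easy, since clique-sums never increase chromatic number beyond the clique size); (c) bound $\chi$ of each piece by $t$, colouring the embeddable parts via colouring theorems on surfaces and handling the apex pieces by induction on $t$ after deleting the $O(1)$ apices.

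\textbf{The obstruction, and the paper's workaround.} Step (c) is precisely where every known approach fails: the rough structure theorem is far too lossy to yield $\chi\le t$ exactly; indeed even the best unconditional bound on chromatic number in terms of the Hadwiger number, $\chi(G)=O\!\bigl(\eta(G)\sqrt{\log\eta(G)}\bigr)$ (Kostochka; Thomason), is off by a logarithmic factor, and removing it is a well-known barrier. The paper therefore adopts the complementary strategy exploited throughout the rest of the excerpt: rather than colour arbitrary $K_{t+1}$-minor-free graphs, one shows it suffices to verify the conjecture on \emph{squares of split graphs}, whose rigid structure (a partition of the vertex set into a clique $K$ and an independent set $S$, with all distances small and $G[K]$ essentially perfect) makes both $\eta$ and $\chi$ far more tractable; and then one attacks concrete subclasses --- here squares of $2$-trees --- where $\chi$ can be computed exactly and a clique minor of that order, with each branch set a path, can be constructed explicitly. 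A complete proof of Hadwiger's conjecture would require carrying step (c) out in full generality, or, equivalently, proving the conjecture for \emph{all} squares of chordal graphs --- a reformulation that, despite its apparent concreteness, is no less open than the original.
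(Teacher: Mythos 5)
You are right not to offer a proof: the statement is Hadwiger's conjecture itself, which the paper records as Conjecture 1.1 and never proves --- its contributions are the reduction to squares of split (hence chordal) graphs and the verification for squares of $2$-trees, exactly as your remark describes. So there is nothing to compare against; your reading of the paper's strategy is accurate. One factual correction to your survey paragraph, since the paper states the record precisely: only the case $t=4$ is known to be \emph{equivalent} to the Four Colour Theorem (via Wagner); the case $t=5$ is the Robertson--Seymour--Thomas result, which uses the Four Colour Theorem together with their analysis of $K_6$-minor-free graphs but is not a known equivalence; and the conjecture is already open for $t \ge 6$ (not $t \ge 7$), with only the partial Kawarabayashi--Toft result ($K_7$- and $K_{4,4}$-minor-free graphs are $6$-colorable) available there. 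Your attribution of the RST theorem to $t=6$ shifts the known cases by one.
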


Hadwiger's conjecture is well known to be a challenging problem. 
Bollob\'{a}s, Catlin and Erd\H{o}s \cite{Bollobs1980195} describe it as ``one of the deepest unsolved problems in graph theory''. Hadwiger himself \cite{hadwiger1943klassifikation} proved the conjecture for $t = 3$. (The conjecture is trivially true for $t=1,2$.) In view of Wagner's result \cite{wagner}, Hadwiger's conjecture for $t = 4$ is equivalent to the Four Color Conjecture, the latter being proved by Appel and Haken \cite{appel1977everyi,appel1977everyii} in 1977. In 1993,  Robertson, Seymour and Thomas \cite{robertson1993hadwiger} proved that Hadwiger's conjecture is true for $t = 5$. The conjecture remains unsolved for $t \ge 6$, though for $t=6$ Kawarabayashi and Toft~\cite{kawarabayashi2005any} proved that any graph that is $K_7$-minor free and $K_{4,4}$-minor free is $6$-colorable.
 
Similar to other difficult conjectures in graph theory, attempting Hadwiger's conjecture for some natural graph classes may lead to new techniques and shed light on the general case. So far Hadwiger's conjecture has been proved for several classes of graphs, including line graphs \cite{reed2004hadwiger}, proper circular arc graphs \cite{belkale2009hadwiger}, quasi-line graphs \cite{Chudnovsky:2008:HCQ:1400140.1400143}, 3-arc graphs \cite{WXZ}, complements of Kneser graphs \cite{XZ}, and powers of cycles and their complements \cite{li2007hadwiger}. There is also an extensive body of work on the Hadwiger number; see, for example, \cite{chandran2008hadwiger} and \cite{GHPP}.  

As mentioned above, Reed and Seymour \cite{reed2004hadwiger} proved that Hadwiger's conjecture is true for line graphs. Recently, there have been multiple attempts to generalize this result to graph classes that properly contain all line graphs. This was typically achieved by identifying some features of line graphs and using them as defining properties of the super class. An important super class of line graphs introduced in \cite{CS}, for which Hadwiger's conjecture has been confirmed \cite{Chudnovsky:2008:HCQ:1400140.1400143}, is the class of \emph{quasi-line graphs}, which are graphs with the property that the neighborhood of every vertex can be partitioned into at most two cliques.

Our research for this paper began with an unsuccessful attempt to generalize the result above for quasi-line graphs by considering classes of graphs with the property that the neighborhood of every vertex can be partitioned into a small number of cliques. A natural choice for us was the class of square graphs of bounded degree graphs, where the \emph{square} of a graph $G$, denoted by $G^2$, is the graph with the same vertex set as $G$ such that two vertices are adjacent if and only if the distance between them in $G$ is equal to $1$ or $2$. It is readily seen that in $G^2$ the neighborhood of each vertex $v$ can be partitioned into at most $d_G(v)$ cliques, where $d_G(v)$ is the degree of $v$ in $G$. However, we soon realized that proving Hadwiger's conjecture for squares of even split graphs is as difficult as proving it for all graphs, where a graph is \emph{split} if its vertex set can be partitioned into an independent set and a clique. This observation is our first result whose proof is straightforward and will be given in the next section.
 
\begin{theorem}
\label{thm:hadsquare}
Hadwiger's conjecture is true for all graphs if and only if it is true for squares of split graphs.
\end{theorem}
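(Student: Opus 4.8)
The only-if direction needs no argument: squares of split graphs are themselves graphs, so if Hadwiger's conjecture holds for all graphs it holds for these in particular. For the converse I would argue by a reduction. Assuming Hadwiger's conjecture for squares of all split graphs, let $G$ be an arbitrary graph; the goal is to deduce $\eta(G) \ge \chi(G)$. The plan is to build from $G$ a split graph $H$ whose square is, up to isomorphism, the \emph{join} $K_N \vee G$ (the graph obtained from a disjoint union of a complete graph $K_N$ and $G$ by adding all edges between the two parts), for a suitable $N = N(G)$, and then to observe that passing to this join shifts both $\chi$ and $\eta$ by exactly $N$.

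For the construction I would represent $G$ as an intersection graph in the cheapest way: set $U = V(G) \cup E(G)$ and, for each $v \in V(G)$, let $S_v = \{v\} \cup \{e \in E(G) : v \in e\} \subseteq U$, a nonempty set with $S_u \cap S_v \ne \emptyset$ iff $u = v$ or $uv \in E(G)$. Let $H$ be the split graph with vertex set $C \cup I$, where $C$ is a clique on the set $U$, $I = \{v' : v \in V(G)\}$ is an independent set, and each $v' \in I$ is adjacent in $H$ to exactly the vertices of $S_v$ in $C$. Then in $H^2$: the clique $C$ persists; since each $S_v$ is nonempty and $C$ is a clique, every $v' \in I$ lies within distance $2$ in $H$ of every vertex of $C$, so $I$ is completely joined to $C$ in $H^2$; and for distinct $u', v' \in I$ any common $H$-neighbour must lie in $C$, so $u'v' \in E(H^2)$ iff $S_u \cap S_v \ne \emptyset$ iff $uv \in E(G)$, i.e. $I$ induces a copy of $G$. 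Hence $H^2 \cong K_N \vee G$ with $N = |V(G)| + |E(G)|$.

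The two parameter identities I would then establish are $\chi(K_N \vee G) = N + \chi(G)$ and $\eta(K_N \vee G) = N + \eta(G)$. The first is routine: the $N$ apex vertices form a clique adjacent to all of $G$, so they consume $N$ colours not used on $G$. For the second, ``$\ge$'' follows by adding $N$ singleton branch sets (one per apex vertex) to an optimal clique minor of $G$; for ``$\le$'', given a $K_t$-minor of $K_N \vee G$ with branch sets $B_1,\dots,B_t$, at most $N$ of the $B_i$ can meet the $N$-vertex apex clique, and each $B_i$ disjoint from it is contained in $V(G)$, induces a connected subgraph of $G$ (within $V(G)$ the edges of $H^2$ are precisely those of $G$), and is still adjacent in $G$ to the other such branch sets, so these $B_i$ yield a clique minor of $G$; whence $t \le N + \eta(G)$. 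Granting Hadwiger's conjecture for the split-graph square $H^2$ now gives $N + \eta(G) = \eta(H^2) \ge \chi(H^2) = N + \chi(G)$, that is $\eta(G) \ge \chi(G)$, as desired.

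I do not expect a serious obstacle; the real work is in locating the right construction. The two spots that need care are: ensuring the sets $S_v$ are nonempty, which is exactly what forces each vertex of $I$ to see the whole clique $C$ within distance $2$ and hence makes $H^2$ a clean clique-join rather than a more complicated graph; and the upper bound $\eta(K_N \vee G) \le N + \eta(G)$, which hinges on the fact that a branch set avoiding the apex clique cannot use it to stay connected, so both its connectivity and its adjacencies to the other apex-avoiding branch sets are already witnessed inside $G$.
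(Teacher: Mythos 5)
Your proposal is correct and matches the paper's proof in essence: construct a split graph $H$ whose vertices encode the incidence structure of $G$ (edges forming the clique side, vertices the independent side), observe that $H^2$ is $G$ joined to a clique of size $N$, and conclude via the additivity $\chi(K_N \vee G) = N + \chi(G)$ and $\eta(K_N \vee G) = N + \eta(G)$. The only (cosmetic) difference is that you also place a copy of $V(G)$ in the clique part, making each $S_v$ automatically nonempty, whereas the paper uses only $E(G)$ as the clique and instead reduces to the case that $G$ has no isolated vertices.
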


A graph is called a \emph{chordal graph} if it contains no induced cycles of length at least $4$. Since split graphs form a subclass of the class of chordal graphs, Theorem \ref{thm:hadsquare} implies:

\begin{corollary}
\label{coro:hadsquare}
Hadwiger's conjecture is true for all graphs if and only if it is true for squares of chordal graphs. 
\end{corollary}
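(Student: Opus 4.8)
The plan is to derive Corollary~\ref{coro:hadsquare} directly from Theorem~\ref{thm:hadsquare} by invoking the classical fact that every split graph is a chordal graph. This fact is standard: if the vertex set of a split graph is partitioned into a clique $K$ and an independent set $I$, then in any induced cycle of length at least four the vertices in $K$ form a clique (so at most two of them occur), while the vertices in $I$ form an independent set of the cycle, and a one-line count shows that the only surviving case, a $4$-cycle, is impossible as well; alternatively one simply cites F\"oldes and Hammer. Consequently the class of squares of split graphs is contained in the class of squares of chordal graphs.

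Granting this, both implications are immediate. For the forward direction, if Hadwiger's conjecture holds for every graph then in particular it holds for every square of a chordal graph, since these are graphs; nothing further is needed. For the reverse direction, assume Hadwiger's conjecture holds for the square of every chordal graph. Because every split graph is chordal, it follows that Hadwiger's conjecture holds for the square of every split graph, and then Theorem~\ref{thm:hadsquare} (its ``if'' direction) yields that Hadwiger's conjecture holds for all graphs.

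I do not expect any genuine obstacle: all the substance resides in Theorem~\ref{thm:hadsquare}, and the only thing to check is that the set inclusion runs in the direction needed for the reverse implication --- namely, that ``Hadwiger's conjecture holds for a class $\mathcal{C}$ of graphs'' is read in the natural monotone sense (for every member of $\mathcal{C}$), so that the hypothesis descends from the class of chordal graphs to the subclass of split graphs. With that convention the corollary follows in a couple of lines.
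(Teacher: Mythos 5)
Your proof is correct and matches the paper's argument exactly: both derive the corollary from Theorem~\ref{thm:hadsquare} by observing that split graphs form a subclass of chordal graphs, so the hypothesis for chordal-graph squares descends to split-graph squares. Nothing more is needed.
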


Theorem \ref{thm:hadsquare} and Corollary \ref{coro:hadsquare} suggest that squares of chordal or split graphs may capture the complexity of Hadwiger's conjecture. These are curious results for us, though they may not make Hadwiger's conjecture easier to prove. Nevertheless, the availability of the property of being square of a split or chordal graph may turn out to be useful. Moreover, Theorem \ref{thm:hadsquare} motivates the study of Hadwiger's conjecture for squares of graphs. In particular, in light of Corollary \ref{coro:hadsquare}, it would be interesting to study Hadwiger's conjecture for squares of some interesting subclasses of chordal graphs in the hope of getting new insights into the conjecture. As a step towards this, we prove that Hadwiger's conjecture is true for squares of $2$-trees. Before presenting this result, let us explain why it is interesting to consider squares of $2$-trees.  

Chordal graphs are precisely the graphs that can be constructed by recursively applying the following operation a finite number of times beginning with a clique: Choose a clique in the current graph, introduce a new vertex, and make this new vertex adjacent to all vertices in the chosen clique. If we begin with a $k$-clique and choose a $k$-clique at each step, then the graph constructed this way is called a \emph{$k$-tree}, where $k$ is a fixed  positive integer. 

We call a graph $G$ a \emph{2-simplicial graph} if $V(G)$ has an ordering such that the higher numbered neighbors of each vertex can be partitioned into at most two cliques. 


It can be easily verified that all quasi-line graphs are 2-simplicial  graphs, but the converse is not true. Thus, in view of the above-mentioned result for quasi-line graphs \cite{Chudnovsky:2008:HCQ:1400140.1400143}, it would be interesting to study whether Hadwiger's conjecture is true for all 2-simplicial graphs. Considering the effort \cite{Chudnovsky:2008:HCQ:1400140.1400143} required for quasi-line graphs, resolving Hadwiger's conjecture for
2-simplicial graphs is likely to be a difficult task.
Moreover, the class of circular arc graphs is a proper subclass of
2-simplicial graphs
\footnote {Consider an ordering of the vertices of a circular arc graph such that a vertex $u$ with a smaller arc always gets a smaller number.}
and as far as we know a lot of effort has already gone into proving Hadwiger's conjecture for circular arc graphs, without success. Therefore, before attempting the entire class of 2-simplicial graphs it seems rational to start with some different but  interesting subclasses of 2-simplicial graphs. Viewing from the context of  the squaring operation of  graphs, we asked the following question: Is there a subclass of  2-simplicial
graphs
which can be expressed as the square of some natural class of graphs? 
If $u \in V(G)$ and $u_1,u_2,\ldots,u_t \in N_G(u)\cap H(u)$ 
(where $H(u)$ is the vertices of $G$ that are
higher numbered than $u$ with respect to the $2$-simplicial ordering), it is clear that in $G^2$, $\cup_{i} (H(u) \cap N_G[u_i])$ will be a subset of the higher numbered neighbors of $u$ in $G^2$.
For each $u_i$, $N_G[u_i]$ will form a clique in $G^2$ but there is no reason why  higher numbered neighbors of $u$ in $G^2$ can be partitioned into at most two cliques, if $t \ge 3$. So, we are tempted to consider only squares of 2-degenerate graphs, since for 2-degenerate graphs, for each vertex $u$, 
$|N_h(u)|= t  \le 2$. Unfortunately, even squares of all 2-degenerate graphs are not  2-simplicial.
If we carefully analyze the situation, we can see that  if the two vertices in $N_h(u)$ are adjacent to each other, the square of such a 2-degenerate
graph will be a 2-simplicial graph. This subclass of 2-degenerate graphs is exactly the class of 2-trees.
Note that though any $2$-tree is a $2$-degenerate graph,  the converse is not true. The square of any $2$-tree is a 2-simplicial  graph (but not necessarily a quasi-line graph), but the square of a $2$-degenerate graph may not be a 2-simplicial  graph.  To us, it seems that squares of 2-trees is one of the neatest non-trivial case to consider.  

The main result in this paper is as follows. It shows that Hadwiger's conjecture is true for a special class of 2-simplicial graphs that is not contained in the class of quasi-line graphs. 
The definition of a branch set of a minor will be given at the end of this section.
 
\begin{theorem}\label{thm:2tree}
Hadwiger's conjecture is true for squares of $2$-trees. Moreover, for any $2$-tree $T$, $T^2$ has a clique minor of order $\chi(T^2)$ for which each branch set induces a path. 
\end{theorem}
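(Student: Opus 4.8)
\medskip
\noindent\textbf{Proposal for the proof of Theorem \ref{thm:2tree}.}

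The plan is to prove the ``moreover'' statement by induction on $n=|V(T)|$, since a $K_{\chi(T^2)}$-minor of $T^2$ gives $\eta(T^2)\ge\chi(T^2)$, i.e.\ Hadwiger's conjecture for $T^2$. For $n\le 3$ the graph $T^2$ is complete and the branch sets can be taken to be single vertices, so assume $n\ge 4$. Every $2$-tree on at least three vertices has a simplicial vertex of degree $2$; fix such a vertex $v$, write $N_T(v)=\{x,y\}$ (so $xy\in E(T)$), and put $T'=T-v$, which is again a $2$-tree. Because $\deg_T(v)=2$ and $xy$ is already an edge, every $a$--$b$ walk through $v$ (for $a,b\in V(T')$) is at least as long as one through the edge $xy$, so no new pair of vertices of $T'$ is brought to within distance $2$; hence $T^2[V(T')]=(T')^2$ and $T^2=(T')^2+v$ with $N_{T^2}(v)=N_{T'}[x]\cup N_{T'}[y]=:S$. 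Since $v$ has the fixed neighbourhood $S$ in $T^2$, we have $\chi((T')^2)\le\chi(T^2)\le\chi((T')^2)+1$, and we split into two cases. If $\chi(T^2)=\chi((T')^2)$, then any $K_{\chi((T')^2)}$-minor of $(T')^2$ with path branch sets is already a $K_{\chi(T^2)}$-minor of $T^2$ with path branch sets, because $(T')^2$ is an induced subgraph of $T^2$; so here nothing has to be done for the minor itself. All the content is in the case $\chi(T^2)=\chi((T')^2)+1=:k$: the natural move is to take the $k-1$ branch sets $B_1,\dots,B_{k-1}$ inherited from $(T')^2$ and adjoin the trivial path $B_k=\{v\}$, and for this to be a $K_k$-minor of $T^2$ one needs $B_i\cap S\neq\emptyset$ for every $i\le k-1$.

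To guarantee this, the inductive statement must be strengthened so that the minor of $(T')^2$ can be chosen with branch sets meeting the relevant set. A workable form is to carry, alongside the minor, control over the \emph{critical edges} of the $2$-tree --- those edges $ab$ such that attaching a new degree-$2$ vertex at $ab$ raises the chromatic number of the square --- and to require that for each critical edge $ab$ there be a $K_{\chi}$-minor of the square with path branch sets, each meeting $N[a]\cup N[b]$ (allowing the minor to depend on $ab$). Observe that $\chi$ jumps exactly when, in every optimal colouring of $(T')^2$, the set $S$ already uses all $\chi((T')^2)$ colours, so in particular $|S|\ge k-1$ and there is room for each branch set to meet $S$; making this precise and identifying the critical edges is aided by first establishing a recursion for $\chi(T^2)$, for instance by deleting at once all degree-$2$ simplicial vertices that share a common pair of neighbours --- these form a clique in $T^2$ with a common outside neighbourhood.

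The crux --- and the step I expect to be the main obstacle --- is propagating this strengthened invariant through the induction. When $v$ is added at $xy$ one must understand how the set of critical edges of $T$ arises from that of $T'$ (new critical edges may appear at $vx$, $vy$, or at edges incident to $x$ or $y$, whose degrees each rise by one, while some old ones may cease to be critical), and then, for \emph{every} critical edge $ab$ of $T$, manufacture a $K_k$-minor of $T^2$ with path branch sets covering $N_T[a]\cup N_T[b]$: starting from the minor supplied by the induction hypothesis for the critical edge $xy$ of $T'$ in the jump case, and from a minor for a suitable critical edge of $T'$ in the non-jump case, where one may still have to splice $v$ onto the end of an appropriate path branch set so that the new critical edges get covered --- all while keeping every branch set a path. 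This bookkeeping rests on a fairly detailed handle on the local structure of a $2$-tree near a simplicial vertex: the neighbourhood $N_T(u)$ of any vertex induces a tree in $T$, the maximal cliques are triangles organised in a clique tree, and the ball of radius $2$ around a vertex is controlled by this tree. I would expect the genuine difficulty to be the simultaneous control, over all critical edges at once, of which branch set covers which part of each neighbourhood, possibly also tracking the locations of the endpoints of the path branch sets so that they remain available for extension.
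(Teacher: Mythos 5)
Your opening reduction is essentially the right one, and it matches the paper's first move (which goes to the first step $T_\ell$ in the $2$-tree construction where $\chi(T_\ell^2)$ equals $\chi(T^2)$, then works entirely inside $G=T_\ell$). The facts you record on the way — $T^2[V(T')]=(T')^2$, $N_{T^2}(v)=N_{T'}[x]\cup N_{T'}[y]$, $\chi((T')^2)\le\chi(T^2)\le\chi((T')^2)+1$, and that in the jump case every optimal coloring of $(T')^2$ uses all colors on $S=N_{T'}[x]\cup N_{T'}[y]$ — are all correct and all implicitly present in the paper's setup. In the non-jump case the inherited minor of $(T')^2$ indeed does the job.

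The gap is exactly where you flag it, and it is the entire content of the theorem. From the fact that $S$ sees all $k-1$ colors in every optimal coloring of $(T')^2$, you need to produce a $K_{k-1}$-minor of $(T')^2$ with path branch sets \emph{all meeting $S$}, and nothing you say closes that. Your proposed strengthened invariant — for each critical edge $ab$, carry a $K_\chi$-minor with path branch sets all meeting $N[a]\cup N[b]$ — is plausible as a statement shape, but you give no argument that it propagates: you would need to classify which edges become or cease to be critical upon adding $v$, and to manufacture a suitable minor for each of them, and you acknowledge you cannot do this. A further issue is that you fix an arbitrary simplicial degree-$2$ vertex $v$, whereas the paper has to choose its analogue (the pivot vertex $p$ at the jump step, under a breadth-first edge-processing order) extremally — $p$ is chosen among all optimal colorings and all vertices of maximum level so that $\min\{\mathrm{level}(x):x\in N(p)\}$ is as large as possible — and this extremal choice is load-bearing in the proof of Lemma 3.8(b). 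The paper does not do your induction at all: it works once and for all in $G=T_\ell$, introduces the sets $A,A',B,C,C',D,D',F,Q,Q'$ around the pivot, proves a chain of lemmas about Kempe (bichromatic) paths and ``bridging'' recolorings (Lemmas 3.9--3.16), and then builds the clique minor out of singletons from $N[w]\cup F$ (or $N[u]\setminus bp(L)$) together with $\min\{|A'|,|D'\cup L|\}$ bichromatic paths. That Kempe-chain machinery is the step your sketch is missing, and it is not a small bookkeeping detail but the technical core of the argument; without it the proposal does not constitute a proof.
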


A graph is called a \emph{generalized $2$-tree} if it can be obtained by allowing one to join a new vertex to a clique of order $1$ or $2$ instead of exactly $2$ in the above-mentioned construction of $2$-trees. (This notion is different from the concept of a partial $2$-tree which is defined as a subgraph of a $2$-tree.) The class of generalized $2$-trees contains all $2$-trees as a proper subclass. The following corollary is implied by (and equivalent to) Theorem~\ref{thm:2tree}.

\begin{corollary}
\label{coro:2tree}
Hadwiger's conjecture is true for squares of generalized $2$-trees. Moreover, for any generalized $2$-tree $G$, $G^2$ has a clique minor of order $\chi(G^2)$ for which each branch set induces a path.
\end{corollary}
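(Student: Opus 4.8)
The plan is to establish Corollary~\ref{coro:2tree} by an induction on $|V(G)|$ that extends the proof of Theorem~\ref{thm:2tree}, the only genuinely new ingredient being an easy case for attachments to a single vertex. Fix a construction of the generalized $2$-tree $G$ and let $v$ be the last vertex introduced; then $N_G(v)$ is a clique of size $1$ or $2$, and $H:=G-v$ is again a generalized $2$-tree. Since $N_G(v)$ is a clique of size at most $2$, no shortest path of $G$ passes through $v$, so deleting $v$ changes no distance among the remaining vertices; hence $H^2=G^2-v$ is the induced subgraph of $G^2$ on $V(G)\setminus\{v\}$, and any set of vertices inducing a path in $H^2$ induces the same path in $G^2$. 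By the inductive hypothesis --- the base $G=K_2$ being trivial, and the assertion being exactly Theorem~\ref{thm:2tree} when $G$ happens to be an ordinary $2$-tree --- $H^2$ has a clique minor $\mathcal{M}$ of order $\chi(H^2)$ all of whose branch sets are paths. Since $G^2$ arises from $H^2$ by adding one vertex, $\chi(G^2)\le\chi(H^2)+1$; if $\chi(G^2)=\chi(H^2)$ then $\mathcal{M}$ is already a clique minor of $G^2$ of the required order with path branch sets, so assume henceforth $\chi(G^2)=\chi(H^2)+1$.

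Suppose first that $v$ was attached to a single vertex $a$. Then $N_{G^2}(v)=N_H[a]$, a clique of $H^2$ of size $\deg_H(a)+1$ (any two of its members are within distance $2$ in $H$), and $\{v\}\cup N_H[a]$ is a clique of $G^2$. If $\deg_H(a)+1<\chi(H^2)$ then in every optimal colouring of $H^2$ some colour is free on $N_H[a]$, and using it at $v$ gives $\chi(G^2)=\chi(H^2)$, a contradiction; so $\deg_H(a)+1=\chi(H^2)$, whence $\{v\}\cup N_H[a]$ is a clique of $G^2$ of size $\chi(H^2)+1=\chi(G^2)$. Its vertices, taken as singleton branch sets, form a clique minor of $G^2$ of order $\chi(G^2)$ with trivial path branch sets. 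In other words, an order-$1$ attachment, whenever it increases the chromatic number, simultaneously forces $\omega(G^2)=\chi(G^2)$, so this case is disposed of at once.

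It remains to treat the case in which $v$ is attached to two adjacent vertices $a,b$, so that $N_{G^2}(v)=N_H[a]\cup N_H[b]$. This is precisely the inductive step in the proof of Theorem~\ref{thm:2tree}: although the graph $H=G-v$ is now a generalized $2$-tree rather than a $2$-tree, that step uses nothing about $H$ beyond the inductive hypothesis for $H^2$ (already in hand), the description of $N_{G^2}(v)$ above, and the assumption that $\chi$ went up by one, so it applies here verbatim and completes the induction. A clique minor of $G^2$ of order $\chi(G^2)$ then gives $\eta(G^2)\ge\chi(G^2)$, i.e.\ Hadwiger's conjecture for $G^2$. The main obstacle is therefore exactly the one already present in (the proof of) Theorem~\ref{thm:2tree}: converting ``$\chi$ increased by one'' into the combinatorial statement that $v$ can be linked, using a bounded number of auxiliary vertices, to $\chi(H^2)$ pairwise distinct branch sets of $\mathcal{M}$, and re-routing and merging those branch sets so that they, together with the new branch set containing $v$, are all still paths.
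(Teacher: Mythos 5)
Your reduction to the ``last vertex $v$'' is correct as far as it goes, and your handling of the case where $v$ is attached to a single vertex $a$ is sound: since $N_H[a]$ is a clique of $H^2$ of size at most $\omega(H^2)\le\chi(H^2)$, and a free colour on $N_H[a]$ would contradict $\chi(G^2)=\chi(H^2)+1$, you correctly conclude $|N_H[a]|=\chi(H^2)$ and that $\{v\}\cup N_H[a]$ is a clique of $G^2$ of size $\chi(G^2)$. The gap is in your final case, where $v$ is attached to an edge $ab$. You assert that the argument establishing Theorem~\ref{thm:2tree} ``uses nothing about $H$ beyond the inductive hypothesis for $H^2$, the description of $N_{G^2}(v)$, and the assumption that $\chi$ went up by one, so it applies verbatim.'' That is not the case: the proof of Theorem~\ref{thm:2tree} is not an inductive step plugging in black-box facts about $H^2$, but a direct structural argument exploiting the $2$-tree structure of the \emph{entire} graph --- levels of edges and vertices, breadth-first processing, the parent/edge-child/vertex-descendant relations, and in particular Lemmas~\ref{lemma:l-1} and~\ref{lemma:n2adjacentuts}, which bound $N_2(z)$ in terms of the endpoints of a level-$(\lmax-2)$ edge because every vertex in $A'\cup Q$ is a vertex-descendant of $ut$ or $us$. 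If $H$ is merely a generalized $2$-tree, a vertex $x\in N(u)$ may have been attached to the single vertex $u$ with a subtree later grown off $x$; then $x$ is not a vertex-descendant of any level-$(\lmax-2)$ edge, $N_2(x)$ need not lie in $N[\{u,t,s\}]$, and Lemma~\ref{lemma:n2adjacentuts} (hence Lemmas~\ref{lemma:bpbn}, \ref{psi-optimal}, \ref{lemma:qpath}, \ref{lemma:q3conn-q1d'}) breaks. So this case does not follow ``verbatim,'' and your induction is incomplete.

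The paper avoids this by decomposing at a cut vertex rather than at the last-added vertex. If $G$ is not a $2$-tree, some step (not necessarily the last) attaches a vertex to a single vertex $u$; then $u$ is a cut vertex and $G=G_1\cup G_2$ with $V(G_1)\cap V(G_2)=\{u\}$, where each $G_i$ is a smaller generalized $2$-tree. Applying the inductive hypothesis to $G_1$ and $G_2$, the paper shows $\chi(G^2)=\max\{\chi(G_1^2),\chi(G_2^2),|N_G(u)|\}$ and that either one of the inherited clique minors, or the clique $N_G(u)$ itself, has the required order with path branch sets. This keeps Theorem~\ref{thm:2tree} as a black box applied only to genuine $2$-trees, whereas your route would require re-establishing that theorem's internal machinery over generalized $2$-trees.
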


In general, in proving Hadwiger's conjecture it is interesting to study the structure of the branch sets forming a clique minor of order no less than the chromatic number. Theorem \ref{thm:2tree} and Corollary \ref{coro:2tree} provide this kind of information for squares of $2$-trees and generalized $2$-trees respectively.

We remark that it is often challenging to establish Hadwiger's conjecture for squares of even very special classes of graphs. We elaborate this point for a few graph classes. 
Obviously, planar graphs form a super class of the class of $2$-trees, but their squares seem to be much more difficult to handle than squares of $2$-trees. In fact, the chromatic number of squares of planar graphs is a very well studied topic in the context of Wegner's conjecture \cite{W}; we will say more about this in section \ref{sec:rem}. Another graph class related to $2$-trees is the class of squares of 2-degenerate graphs. Recently, the first author of this paper and his collaborators \cite{Coll} attempted Hadwiger's conjecture for squares of a special class of $2$-degenerate graphs, namely subdivision graphs. The \emph{subdivision} of a graph $G$, denoted by $S(G)$, is obtained from $G$ by replacing each edge by a path of length two. The square $S(G)^2$ of $S(G)$ is known as the total graph of $G$, and the chromatic number $\chi(S(G)^2)$ is simply the total chromatic number of $G$. Thus, unsurprisingly, Hadwiger's conjecture for squares of subdivision graphs is closely related to the long-standing total coloring conjecture, which can be stated as $\chi(S(G)^2) \le \Delta(G) + 2$, where $\Delta(G)$ is the maximum degree of $G$. It was shown in \cite{Coll} that Hadwiger's conjecture for squares of subdivisions is not difficult to prove if we assume that the total coloring conjecture is true. The best result to date for the total coloring conjecture, obtained by Reed and Molloy \cite{ReedMolloy}, asserts that $\chi(S(G)^2) \le \Delta(G) + {10}^{26}$. Using this result, it was proved in \cite{Coll} that Hadwiger's conjecture is true for squares of subdivisions of highly edge-connected graphs. However, it seems non-trivial to prove Hadwiger's conjecture for squares of subdivisions of all graphs without getting tighter bounds for the total chromatic number.  
        
All graphs considered in the paper are finite, undirected and simple. The vertex and edge sets of a graph $G$ are denoted by $V(G)$ and $E(G)$, respectively. If $u$ and $v$ are adjacent in $G$, then $uv$ denotes the edge joining them. As usual we use $\omega(G)$ to denote the clique number of $G$. A proper coloring of $G$ using exactly $\chi(G)$ colors is called an \emph{optimal coloring} of $G$. 
   
An $H$-minor of a graph $G$ can be thought as a family of $t = |V(H)|$ vertex-disjoint subgraphs $G_1, \ldots, G_t$ of $G$ such that each $G_i$ is connected (possibly $K_1$) and the graph constructed in the following way is isomorphic to $H$: Identify all vertices of each $G_i$ to obtain a single vertex $v_i$, and draw an edge between $v_i$ and $v_j$ if and only if there exists at least one edge of $G$ between $V(G_i)$ and $V(G_j)$. The vertex set of each subgraph $G_i$ in the family is called a {\em branch set} of the minor $H$. This equivalent definition of a minor will be used throughout the paper. 

The proof of Theorem \ref{thm:2tree} is the main body of the paper and will be given in section \ref{sec:2tree}. In section \ref{sec:coro2trees} we prove Corollary \ref{coro:2tree} using Theorem \ref{thm:2tree}, and in the last section we make a few remarks to conclude the paper.

\section{Proof of Theorem \ref{thm:hadsquare}} 
\label{sec:hadsquare}

It suffices to prove that if Hadwiger's conjecture is true for squares of all split graphs then it is also true for all graphs. 

So we assume that Hadwiger's conjecture is true for squares of split graphs. Let $G$ be an arbitrary graph with at least two vertices. Since deleting isolated vertices does not affect the chromatic or Hadwiger number, without loss of generality we may assume that $G$ has no isolated vertices. Construct a split graph $H$ from $G$ as follows: For each vertex $x$ of $G$, introduce a vertex $v_x$ of $H$, and for each edge $e$ of $G$, introduce a vertex $v_e$ of $H$, with the understanding that all these vertices are pairwise distinct. Denote 
$$
S = \{v_x: x \in V(G)\},\;\, C = \{v_e: e \in E(G)\}.
$$ 
Construct $H$ with vertex set $V(H) = S \cup C$ in such a way that no two vertices in $S$ are adjacent, any two vertices in $C$ are adjacent, and $v_x \in S$ is adjacent to $v_e \in C$ if and only if $x$ and $e$ are incident in $G$. Obviously, $H$ is a split graph as its vertex set can be partitioned into the independent set $S$ and the clique $C$. 

\smallskip
\textit{Claim 1:} The subgraph of $H^2$ induced by $S$ is isomorphic to $G$. 

In fact, for distinct $x, y \in V(G)$, $v_x$ and $v_y$ are adjacent in $H^2$ if and only if they have a common neighbor in $H$. Clearly, this common neighbor has to be from $C$, say $v_e$ for some $e \in E(G)$, but this happens if and only if $x$ and $y$ are adjacent in $G$ and $e=xy$. Therefore, $v_x$ and $v_y$ are adjacent
in $H^2$ if and only if $x$ and $y$ are adjacent in $G$. This proves Claim 1.

\smallskip
\textit{Claim 2:} In $H^2$ every vertex of $S$ is adjacent to every vertex of $C$.

This follows from the fact that $C$ is a clique of $H$ and $x$ is incident with at least one edge in $G$.

\smallskip
\textit{Claim 3:} $\chi(H^2) = \chi(G) + |C|$. 

In fact, by Claim 1 we may color the vertices of $S$ with $\chi(G)$ colors by using an optimal coloring of $G$ (that is, choose an optimal coloring $\phi$ of $G$ and assign the color $\phi(x)$ to $v_x$ for each $x \in V(G)$). We then color the vertices of $C$ with $|C|$ other colors, one for each vertex of $C$. It is evident that this is a proper coloring of $H^2$ and hence $\chi(H^2) \le \chi(G) + |C|$. On the other hand, since $C$ is a clique, it requires $|C|$ distinct colors in any proper coloring of $H^2$. Also, by Claim 2 none of these $|C|$ colors can be assigned to any vertex of $S$ in any proper coloring of $H^2$, and by Claim 1 the vertices of $S$ need at least $\chi(G)$ colors in any proper coloring of $H^2$. Therefore, $\chi(H^2) \ge \chi(G) + |C|$ and Claim 3 is proved.

\smallskip
\textit{Claim 4:} $\eta(H^2) = \eta(G) + |C|$.
 
To prove this claim, consider the branch sets of $G$ that form a clique minor of $G$ with order $\eta(G)$, and take the corresponding branch sets in the subgraph of $H^2$ induced by $S$. Take each vertex of $C$ as a separate branch set. Clearly, these branch sets produce a clique minor of $H^2$ with order $\eta(G) + |C|$. Hence $\eta(H^2) \ge \eta(G) + |C|$.  

To complete the proof of Claim 4, consider an arbitrary clique minor of $H^2$, say, with branch sets $B_1,B_2,\ldots,B_k$. Define $B_i' = B_i$ if $B_i \cap C = \emptyset$ (that is, $B_i \subseteq S$) and $B_i' = B_i \cap C$ if $B_i \cap C \ne \emptyset$. It can be verified that $B_1',B_2',\ldots,B_k'$ also produce a clique minor of $H^2$ with order $k$. Thus, if $k > \eta(G) + |C|$, then there are more than $\eta(G)$ branch sets among $B_1',B_2',\ldots,B_k'$ that are contained in $S$. In view of Claim 1, this means that $G$ has a clique minor of order strictly bigger than $\eta(G)$, contradicting the definition of $\eta(G)$. Therefore, any clique minor of $H^2$ must have order at most $\eta(G) + |C|$ and the proof of Claim 4 is complete. 
	
Since we assume that Hadwiger's conjecture is true for squares of split graphs, we have $\eta(H^2) \ge \chi(H^2)$. This together with Claims 3-4 implies $\eta(G) \ge \chi(G)$; that is, Hadwiger's conjecture is true for $G$. This completes the proof of Theorem \ref{thm:hadsquare}.

\section{Proof of Theorem \ref{thm:2tree}} 
\label{sec:2tree}
 
\newcommand{\lvl}{\emph{level }}
\newcommand{\lmax}{\ell_{max}}
\newcommand{\childn}{\emph{vertex-children }}
\newcommand{\child}{\emph{vertex-child }}
\newcommand{\echildn}{\emph{edge-children }}
\newcommand{\echild}{\emph{edge-child }}
\newcommand{\edesc}{\emph{edge-descendant }}
\newcommand{\desc}{\emph{vertex-descendant }}
\newcommand{\col}{\chi({G^2})}
\newcommand{\cqm}{\eta({G^2})}
\newcommand{\cq}{\omega(G^2)}
\newcommand{\piv}{\emph{pivot }}
\newcommand{\np}{N_2[p] }
\newcommand{\n}[1]{N_2[#1]}
\newcommand{\coup}{\mathtt{mate}}
\newcommand{\brset}{\mathcal{B}}

\subsection{Prelude}
\label{subsec:pre}

By the definition of a $k$-tree given in the previous section, a 2-tree is a graph that can be recursively constructed by applying the following operation a finite number of times beginning with $K_2$: Pick an edge $e=uv$ in the current graph, introduce a new vertex $w$, and add edges $uw$ and $vw$ to the graph. We say that $e$ is \emph{processed} in this \emph{step} of the construction. We also say that $w$ is a \emph{vertex-child} of $e$; each of $uw$ and $vw$ is an \emph{edge-child} of $e$; $e$ is the \emph{parent} of each of $w, uw$ and $vw$; and $uw$ and $vw$ are \emph{siblings} of each other. An edge $e_2$ is said to be an \emph{edge-descendant} of an edge $e_1$, if either $e_2 = e_1$, or recursively, the parent of $e_2$ is an \emph{edge-descendant} of $e_1$. A vertex $v$ is said to be a \emph{vertex-descendant} of an edge $e$ if $v$ is a \emph{vertex-child} of an \emph{edge-descendant} of $e$.  

An edge $e$ may be processed in more than one step. If necessary, we can change the order of edge-processing so that $e$ is processed in consecutive steps but the same $2$-tree is obtained. So without loss of generality we may assume that for each edge $e$ all the steps in which $e$ is processed occur consecutively.

We now define a \emph{level} for each edge and each vertex as follows. Initially, the level of the first edge and its end-vertices is defined to be $0$. Inductively, any vertex-child or edge-child of an edge with level $k$ is said to have level $k+1$. Observe that two edges that are siblings of each other have the same level. If there exists a pair of edges $e, f$ with levels $i, j$ respectively such that $i < j$ and the batch of consecutive steps where $e$ is processed is immediately after the batch of consecutive steps where $f$ is processed, then we can move the batch of steps where $e$ is processed to the position immediately before the processing of $f$ without changing the structure of the $2$-tree. We repeat this procedure until no such pair of edges exists. So without loss of generality we may assume that a \emph{breadth-first ordering} is used when processing edges, that is, edges of level $i$ are processed before edges of level $j$ whenever $i<j$.    

To prove Theorem~\ref{thm:2tree}, we will prove $\eta(T^2) \ge \chi(T^2)$ for any $2$-tree $T$. In the simplest case where $\chi(T^2)=2$, this inequality is true as $T^2$ has at least one edge and so contains a $K_2$-minor. Moreover, in this case both branch sets of this $K_2$-minor are singletons (and so induce paths of length $0$). 

In what follows $T$ is an arbitrary 2-tree with $\chi(T^2) \ge 3$. Denote by $T_i$ the $2$-tree obtained after the $i^{\textrm{th}}$ step in the construction of $T$ as described above. Then there is a unique positive integer $\ell$ such that $\chi(T^2)=\chi(T^2_{\ell})=\chi(T^2_{{\ell}-1})+1$. Define
$$
G=T_{\ell}.
$$
We will prove that $\cqm \ge \col$ and $G^2$ has a clique minor of order $\col$ for which each branch set induces a path. Once this is achieved, we then have $\eta(T^2) \ge \eta(G^2) \ge \chi(G^2) = \chi(T^2)$ and $T^2$ contains a clique minor of order $\chi(T^2)$ whose branch sets induce paths, as required to complete the proof of Theorem~\ref{thm:2tree}. 

Given $X \subseteq V(G)$, define
$$
N(X) = \{v \in V(G) \setminus X: \text{$v$ is adjacent in $G$ to at least one vertex in $X$}\}.
$$ 
Define 
$$
N[X] = N(X) \cup X,\;\, N_2[X]=N[N[X]],\;\, N_2(X)=N_2[X]\setminus X.
$$
In particular, for $x \in V(G)$, we write $N(x)$, $N[x]$, $N_2(x)$, $N_2[x]$ in place of $N(\{x\})$, $N[\{x\}]$, $N_2(\{x\})$, $N_2[\{x\}]$, respectively. 

Denote by $\lmax$ the maximum level of any edge of $G$. Then the maximum level of any vertex in $G$ is also $\lmax$. Observe that the level of the last edge processed is $\lmax-1$, and none of the edges with level $\lmax$ has been processed at the completion of the $\ell^{\text{th}}$ step, due to the breadth-first ordering of processing edges. Obviously, $\lmax \le \ell$.  

If $\lmax = 0$ or $1$, then $G^2$ is a complete graph and so $\chi(G^2) = \omega(G^2) = \eta(G^2)$. Moreover, $G^2$ contains a clique minor of order $\chi(G^2)$ for which each branch set induces a path of length $0$. Hence the result is true when $\lmax = 0$ or $1$. 

We assume $\lmax \ge 2$ in the rest of the proof. We will prove a series of lemmas that will be used in the proof of Theorem \ref{thm:2tree}. See Figure \ref{fig:relations} for relations among some of these lemmas.

\begin{figure}
	\centering
	\includegraphics[scale=0.35]{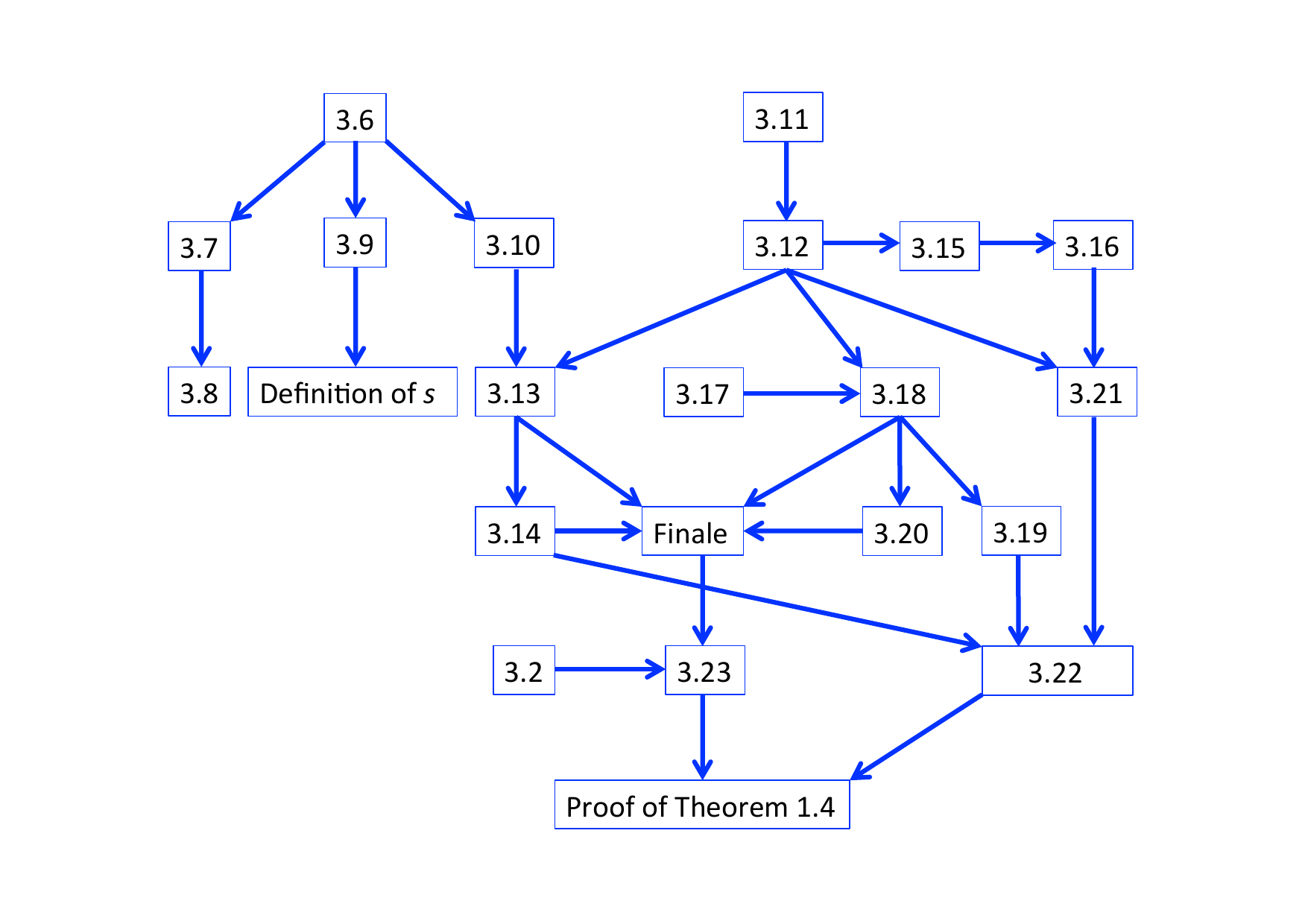}
	\vspace{-1cm}
	\caption{Lemmas to be proved and their relations.}
	\label{fig:relations}
\end{figure}

\subsection{Pivot coloring, pivot vertex and its proximity} 

\begin{lemma}\label{pivot}
There exist an optimal coloring $\mu$ of $G^2$ and a vertex $p$ of $G$ at level $\lmax$ such that $p$ is the only vertex with color $\mu(p)$.
\end{lemma}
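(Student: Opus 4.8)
The plan is to take the pivot vertex $p$ to be the vertex $w$ introduced at the $\ell$-th step of the construction of $G=T_\ell$; let $e=uv$ be the edge processed at that step. Two properties of $w$ are immediate. Since the construction uses a breadth-first ordering and the last processed edge $e$ has level $\lmax-1$, its vertex-child $w$ has level $\lmax$, which is exactly the level demanded of $p$. Since step $\ell$ is the last step, no edge incident with $w$ is ever processed, so $N_G(w)=\{u,v\}\subseteq V(T_{\ell-1})$ and, consequently, $N_{G^2}(w)\subseteq V(T_{\ell-1})$ as well.

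The key step will be to show $G^2-w=T_{\ell-1}^2$, i.e.\ $(G^2)[V(T_{\ell-1})]=T_{\ell-1}^2$. One inclusion is trivial because $T_{\ell-1}$ is a subgraph of $G$. For the other, given distinct $x,y\in V(T_{\ell-1})$ at distance at most $2$ in $G$, a shortest $x$--$y$ walk either avoids $w$ (and hence lies in $T_{\ell-1}$) or passes through $w$, in which case it has the form $x\,w\,y$, forcing $\{x,y\}=N_G(w)=\{u,v\}$; since $uv\in E(T_{\ell-1})$, in either case $x$ and $y$ are within distance $2$ in $T_{\ell-1}$. With this established, the minimality in the definition of $\ell$ gives $\chi(G^2-w)=\chi(T_{\ell-1}^2)=\col-1$. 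I would then take an optimal colouring $\nu$ of $G^2-w$ (using some set of $\col-1$ colours), choose a colour $c$ not in that set, and define $\mu$ on $G^2$ by $\mu|_{V(T_{\ell-1})}=\nu$ and $\mu(w)=c$. Since every edge of $G^2$ incident with $w$ has its other endpoint in $V(T_{\ell-1})$, where $c$ is unused, $\mu$ is a proper colouring; it uses exactly $\col$ colours, so it is optimal, and $w$ is the only vertex with colour $c$. Thus $\mu$ and $p=w$ satisfy the lemma.

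I expect the only point needing care to be the identity $G^2-w=T_{\ell-1}^2$ --- one must be sure that removing $w$ from the square does not also remove some edge $xy$ that existed only on account of a length-$2$ detour $x\,w\,y$; this is controlled precisely by the facts that $w$ has degree $2$ in $G$ and is added last, so such a detour arises only when $xy$ is already an edge of $T_{\ell-1}$. Everything else is routine bookkeeping with the monotonicity of $i\mapsto\chi(T_i^2)$ and the minimality built into the definition of $\ell$.
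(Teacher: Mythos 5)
Your proof is correct and follows the same approach as the paper: take $p$ to be the vertex introduced at step $\ell$, color $T_{\ell-1}^2$ with $\chi(G^2)-1$ colors, and give the new vertex a fresh color. You make explicit the identity $G^2-w=T_{\ell-1}^2$ (which the paper tacitly uses when it asserts the extended coloring is proper); this is a worthwhile check, and your justification of it — that the new vertex has degree~$2$ in $G$ and its two neighbors are already adjacent in $T_{\ell-1}$, so no length-$2$ detour through it creates a new edge of the square — is correct.
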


\begin{proof}
Let $v$ be the vertex introduced in the step $\ell$. Then $v$ has level $\lmax$. By the definition of $G = T_{\ell}$, there exists a proper coloring of $T^2_{\ell-1}$ using $\chi(G^2)-1$ colors. Extend this coloring to $G^2$ by assigning a new color to $v$. This extended coloring $\phi$ is an optimal coloring of $G^2$ under which $v$ is the only vertex with color $\phi(v)$. 
\end{proof}

Note that, apart from the pair $(\phi, v)$ in the proof above, there may be other pairs $(\mu, p)$ with the property in Lemma \ref{pivot}. 

In the remaining part of the paper, we will use the following notation and terminology (see Figure~\ref{fig:mainfig} for an illustration):  

\begin{itemize}
\item $\mu, p$: an optimal coloring of $G^2$ and a vertex of $G$, respectively, as given in Lemma \ref{pivot}; we fix a pair $(\mu, p)$ such that the minimum level of the vertices in $N(p)$ is as large as possible; we call this particular $\mu$ the \emph{pivot coloring} and this particular $p$ the \emph{pivot vertex};

\item $uw$: the parent of $p$;

\item $t$: the vertex such that $w$ is a child of $ut$, so that the level of $ut$ is $\lmax-2$, and $uw$ and $wt$ are siblings with level $\lmax-1$ (the existence of $t$ is ensured by the fact $\lmax \ge 2$);

\item $B$: the set of vertex-children of $wt$;  

\item $C$: the set of vertex-children of $uw$; 

\item $\mu(X) = \{\mu(x): x \in X\}$, for any subset $X \subseteq V(G)$;

\item when we say the color of a vertex, we mean the color of the vertex under the coloring $\mu$, unless stated otherwise.  
\end{itemize}

\begin{lemma}\label{lemma:allcolors}
All colors used by $\mu$ are present in $N_2[p]$.
\end{lemma}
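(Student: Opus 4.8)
\textbf{Proof plan for Lemma \ref{lemma:allcolors}.}
The plan is to argue by contradiction: suppose some color $c$ used by $\mu$ does not appear on any vertex of $N_2[p]$. The key observation is that $p$ itself is the unique vertex with color $\mu(p)$ (by Lemma \ref{pivot}), so $\mu(p)\neq c$, and moreover no vertex in $N_2[p]$ has color $c$. I would then try to recolor $p$ with color $c$. Since $p$ is adjacent in $G^2$ precisely to the vertices of $N_2(p)\subseteq N_2[p]$, and none of those carries color $c$, changing $\mu(p)$ to $c$ yields another proper coloring $\mu'$ of $G^2$ using the same number of colors, hence still optimal. Under $\mu'$, the color $\mu(p)$ is now used by \emph{no} vertex at all, so $\mu'$ is in fact a proper coloring of $G^2$ with only $\chi(G^2)-1$ colors, contradicting the optimality (equivalently, contradicting $\chi(G^2)=\chi(G^2_\ell)=\chi(G^2_{\ell-1})+1$, i.e.\ that $G^2$ genuinely needs $\chi(G^2)$ colors).

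The one subtlety to nail down is why recoloring $p$ alone suffices, i.e.\ that the only constraints on $p$'s color come from $N_2[p]$. This is immediate from the definition of the square: $p$'s neighbors in $G^2$ are exactly the vertices at distance $1$ or $2$ from $p$ in $G$, which is $N_2(p)$. So in any proper coloring of $G^2$, the color of $p$ must differ from the colors appearing on $N_2(p)$, and that is the \emph{only} restriction; any choice of a color not used on $N_2(p)$ keeps the coloring proper. Since $c$ is used somewhere by $\mu$ but not on $N_2[p]\supseteq N_2(p)$, it is available for $p$.

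I do not expect a genuine obstacle here; the lemma is essentially a ``uniqueness of $p$'s color forces every color to be nearby'' argument, and the main point is simply to phrase the contradiction with the minimality/optimality of $\mu$ correctly. The only thing to be careful about is the trivial-looking step that after recoloring, the old color $\mu(p)$ has disappeared entirely — this uses crucially that $p$ was the \emph{unique} vertex of color $\mu(p)$, which is exactly the content of Lemma \ref{pivot}. With that in hand the reduction from $\chi(G^2)$ colors to $\chi(G^2)-1$ colors is immediate and gives the desired contradiction.
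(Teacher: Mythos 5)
Your proposal is correct and follows exactly the same argument as the paper: recolor $p$ with the supposedly missing color $c$, note that the uniqueness of $p$'s original color (Lemma \ref{pivot}) then eliminates $\mu(p)$ entirely, and derive a contradiction with the optimality of $\mu$. You simply spell out a few more of the routine details, but the approach and the key observations are identical.
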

\begin{proof}
If there is a color $c$ used by $\mu$ that is not present in $N_2\left[ p \right]$, then we can re-color $p$ with $c$. Since $p$ is the only vertex with color $\mu(p)$ under $\mu$, we then obtain a proper coloring of $G^2$ with $\chi(G^2)-1$ colors, which is a contradiction.
\end{proof}


\begin{figure}
	\centering
	\includegraphics[scale=0.35]{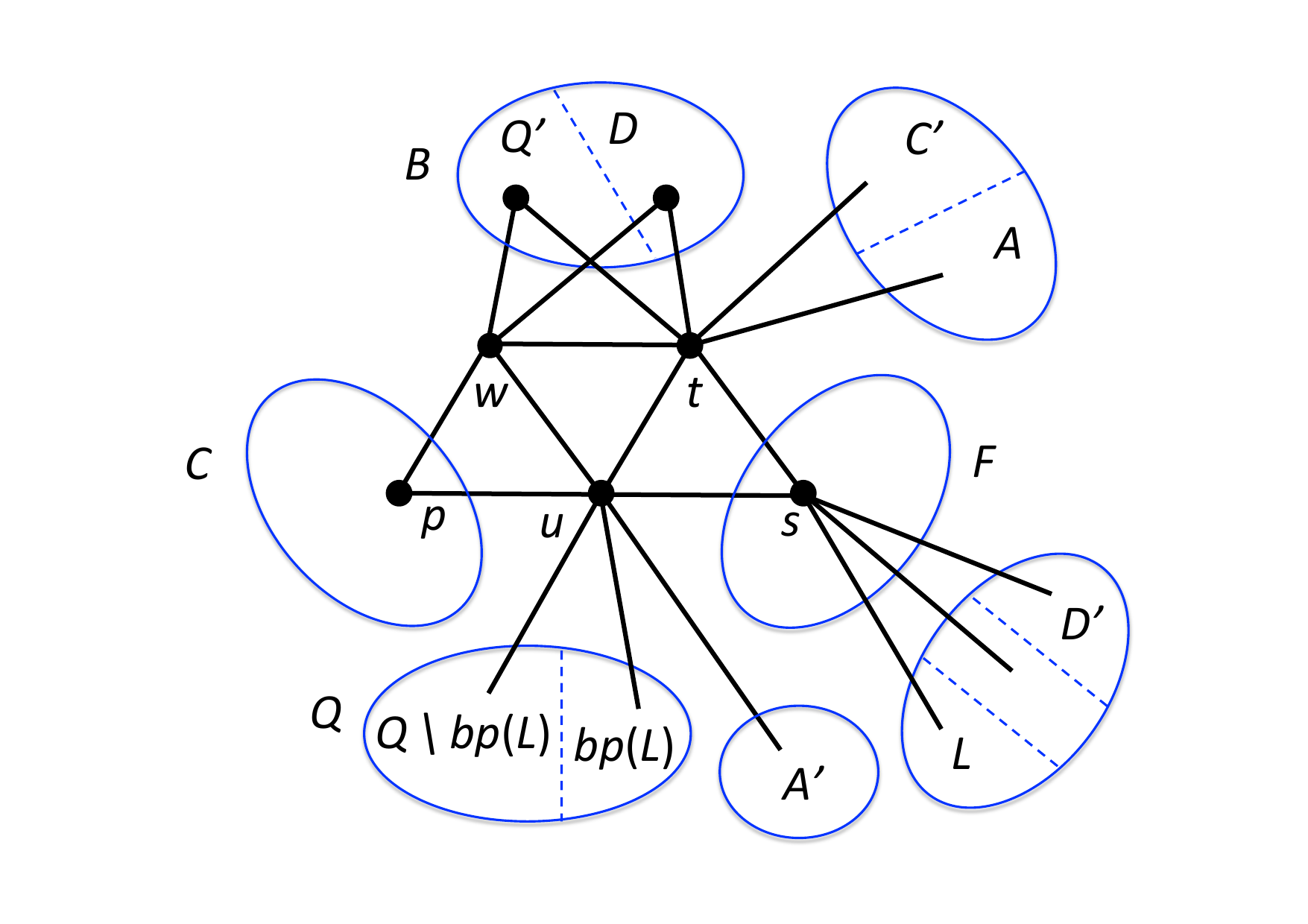}
        	\caption{Vertex subsets of the 2-tree $G$ used in the proof of Theorem \ref{thm:2tree}.}
	\label{fig:mainfig}
\end{figure}

\begin{lemma}\label{unprocessed}
$N(b)=\{w,t\}$ for any $b\in B$, and $N(c)=\{u,w\}$ for any $c\in C$.
\end{lemma}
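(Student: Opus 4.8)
The plan is to use the breadth-first processing order together with the fact that $G=T_\ell$, where $\ell$ is the first step at which the chromatic number of the square jumps to its final value. First I would recall what the two claimed identities say: $wt$ and $uw$ are edges of level $\lmax-1$, and $b\in B$ (resp.\ $c\in C$) is a vertex-child of $wt$ (resp.\ $uw$), hence $b$ has level $\lmax$ and its only neighbors in $G$ at the moment it was introduced are $w$ and $t$. So the content of the lemma is that \emph{no further edges incident to $b$ are ever added}, i.e.\ $b$ is never used as an endpoint of a processed edge, and likewise for $c$. Equivalently, $b$ and $c$ have degree exactly $2$ in $G$.

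The key observation is that any new edge incident to $b$ would be an edge of level $\lmax+1$ (since $b$ has level $\lmax$), so processing it would be a step strictly after all level-$\lmax$ vertices have been introduced. More to the point, I would argue directly from the choice of $\ell$: the vertex $v$ introduced in step $\ell$ has level $\lmax$, and by the breadth-first ordering every edge processed up to and including step $\ell$ has level at most $\lmax-1$ (this is stated in the Prelude: ``none of the edges with level $\lmax$ has been processed at the completion of the $\ell$-th step''). Since $b$ and $c$ have level $\lmax$, every edge incident to $b$ or $c$ has level $\lmax$, so no such edge has been processed by step $\ell$. Because $G=T_\ell$ is exactly the graph obtained after step $\ell$, this means that in $G$ the vertices $b$ and $c$ are incident only to the edges created when they were introduced: $b$ is adjacent precisely to $w$ and $t$, and $c$ is adjacent precisely to $u$ and $w$. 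That is exactly $N(b)=\{w,t\}$ and $N(c)=\{u,w\}$.

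The one point that needs care — and I expect it to be the only real obstacle — is making sure that $b$ (and $c$) really has level $\lmax$ and not something smaller, and that $wt$ really has level $\lmax-1$. For $wt$: $w$ is a child of $ut$, so $ut$ has level $\lmax-2$ by the definition of $t$ in the notation list, and hence $uw$ and $wt$, being edge-children of $ut$, have level $\lmax-1$; then $b$, a vertex-child of $wt$, has level $\lmax$, and similarly $c$, a vertex-child of $uw$, has level $\lmax$. Once these level computations are in place, the argument above closes: any edge incident to $b$ or $c$ other than the two defining ones would have level $\lmax$ and hence would not yet be processed in $G=T_\ell$, contradicting its presence in $G$. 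This completes the proof.
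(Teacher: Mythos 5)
Your proof is essentially the same as the paper's: the paper observes that $bw$ and $bt$ have level $\lmax$, hence have not been processed by step $\ell$, so $N(b)=\{w,t\}$, and analogously for $c$. One minor wording slip in your closing sentence: a hypothetical extra edge incident to $b$ would be an edge-child of $bw$ or $bt$ and hence have level $\lmax+1$ (not $\lmax$), and the correct conclusion is that such an edge cannot have been \emph{created} (since its parent has not been processed), rather than that it has not been \emph{processed}; your earlier ``More to the point'' paragraph already states the argument correctly.
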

\begin{proof}
Since both $bw$ and $bt$ have level $\lmax$, they have not been processed at the completion of the $\ell^{\text{th}}$ step. Hence the first statement is true. The second statement can be proved similarly.
\end{proof}

Define
$$
F = (N(u)\cap N(t)) \setminus \{w\}
$$
$$
C' = \{x\in N(t): \mu(x) \in \mu(C)\}
$$ 
$$
A = N(t)\setminus (B \cup F \cup C' \cup \{u, w\}).
$$ 
Note that $C' \subseteq N(t)\setminus (B \cup F \cup \{u, w\})$ and $\{A, C'\}$ is a partition of $N(t)\setminus (B \cup F \cup \{u, w\})$. Note also that there may exist edges between $F$ and $A \cup C'$.

\begin{lemma}\label{lemma:AA}
	$\mu(A)\subseteq \mu(N(u)\setminus (C \cup F \cup \{w,t\}))$.
\end{lemma}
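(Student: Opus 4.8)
The plan is to fix an arbitrary $a \in A$ and produce a vertex $y \in N(u) \setminus (C \cup F \cup \{w,t\})$ with $\mu(y) = \mu(a)$. First I would locate where the color $\mu(a)$ can possibly occur. Since $p$ is a vertex-child of $uw$, Lemma~\ref{unprocessed} gives $N(p) = \{u,w\}$, hence $N[p] = \{p,u,w\}$ and $N_2[p] = N[u] \cup N[w]$; as $u \in N(w)$ and $w \in N(u)$, this equals $N(u) \cup N(w)$. By Lemma~\ref{lemma:allcolors} every color of $\mu$ occurs in $N_2[p]$, so in particular $\mu(a)$ occurs on some vertex of $N(u) \cup N(w)$.

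Next I would record the elementary $G^2$-adjacencies forced by $a \in A \subseteq N(t) \setminus \{u,w\}$. The vertices $u$ and $w$ also lie in $N(t)$ (since $ut$ is an edge and $w$ is a vertex-child of $ut$), and $t$ is adjacent to $a$ in $G$; hence $a$ is adjacent in $G^2$ to $t$, to $u$, to $w$, and indeed to every vertex of $N(t) \setminus \{a\}$ (any two distinct vertices of $N(t)$ are at distance at most $2$ in $G$). Consequently $\mu(a)$ equals none of $\mu(t), \mu(u), \mu(w)$ and none of the colors on $N(t) \setminus \{a\}$. In particular $\mu(a) \notin \mu(B)$ (as $B \subseteq N(t)$ by Lemma~\ref{unprocessed} and $a \notin B$), and $\mu(a) \notin \mu(C)$ (otherwise $a$ would be a vertex of $N(t)$ with $\mu(a) \in \mu(C)$, i.e.\ $a \in C'$, contradicting $a \in A$).

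The heart of the argument is to rule out $\mu(a) \in \mu(N(w))$. For this I would compute $N(w)$ exactly: the birth edges of the vertex-child $w$ of $ut$ are $uw$ and $wt$, and every other edge incident to $w$ is an edge-child of one of these, namely $cw$ $(c \in C)$ or $bw$ $(b \in B)$; all these edge-children have level $\lmax$, hence are unprocessed at step $\ell$, so no further edges incident to $w$ exist and $N(w) = \{u,t\} \cup B \cup C$. By the previous paragraph $\mu(a)$ avoids $\mu(\{u,t\})$, $\mu(B)$ and $\mu(C)$, so $\mu(a) \notin \mu(N(w))$; combined with $\mu(a) \in \mu(N(u)) \cup \mu(N(w))$, this forces $\mu(a) \in \mu(N(u))$. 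Finally I would check that any $y \in N(u)$ with $\mu(y) = \mu(a)$ automatically satisfies $y \notin C \cup F \cup \{w,t\}$: $y \ne w,t$ since $\mu(a) \notin \{\mu(w),\mu(t)\}$; $y \notin C$ since $\mu(a) \notin \mu(C)$; and $y \notin F$ since $F \subseteq N(t)$ while $a$ is $G^2$-adjacent to every other vertex of $N(t)$ and $a \notin F$. This gives $\mu(a) \in \mu(N(u) \setminus (C \cup F \cup \{w,t\}))$. The only delicate point is pinning down $N(w)$ precisely using the unprocessed level-$\lmax$ edges; once that description is in hand, the rest is routine bookkeeping with distances in $G$.
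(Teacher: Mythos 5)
Your proof is correct and follows the same approach as the paper: use Lemma~\ref{lemma:allcolors} to place a $\mu(a)$-colored vertex in $N_2[p] = N(u) \cup N(w)$, then use the $G^2$-adjacencies forced by $a \in N(t)$ to rule out everything except $N(u) \setminus (C \cup F \cup \{w,t\})$. The paper compresses the case analysis into the single inclusion $N_2[p] \setminus N_2(a) \subseteq N(u) \setminus (F \cup \{w,t\})$ (together with $\mu(a)\notin\mu(C)$), whereas you unpack it by computing $N(w) = \{u,t\} \cup B \cup C$ and checking each piece explicitly, but the content is the same.
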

\begin{proof}
Let $a\in A$. Clearly, $\mu(a) \notin \mu(N_2(a))$. On the other hand, $\mu(a) \in \mu(N_2\left[ p \right])$ by Lemma~\ref{lemma:allcolors}. So $\mu(a) \in \mu (N_2[p]\setminus N_2(a))$. Since $N_2[p]\setminus N_2(a)\subseteq N(u)\setminus (F \cup \{w, t\})$, it follows that $\mu(a) \in \mu(N(u) \setminus (F \cup \{w,t\}))$. By the definition of $A$, we also have $\mu(a) \notin \mu(C)$. Therefore, $\mu(a) \in \mu(N(u)\setminus (C \cup F \cup \{w,t\}))$. 
\end{proof}

By Lemma~\ref{lemma:AA}, for each color $c\in \mu(A)$, there is a $c$-colored vertex in $N(u)\setminus (C \cup F \cup \{w,t\})$. On the other hand, no two vertices in $N(u)$ can have the same color. So each color in $\mu(A)$ is used by exactly one vertex in $N(u)$. Let 
$$
A' = \{x \in N(u): \mu(x) \in \mu(A)\}.
$$ 
Then $A' \subseteq N(u)\setminus (C \cup F \cup \{w,t\})$ and
$$
\mu(A')=\mu(A).
$$ 
Since no two vertices in $A$ ($A'$, respectively) are colored the same, the relation $\mu(a) = \mu(a')$ defines a bijection $a \mapsto a'$ from $A$ to $A'$. We call $a$ and $a'$ the mates of each other and denote the relation by 
$$
a = \coup(a'),\;\, a' = \coup(a).
$$
Note that $a \ne a'$ as $A$ and $A'$ are disjoint. Define
$$
Q=N(u)\setminus (A' \cup C \cup F \cup \{ w,t\}).
$$
Then $\{A', Q\}$  is a partition of $N(u)\setminus (C \cup F \cup \{w,t\})$. Note that there may exist edges between $F$ and $A' \cup Q$.

Define
$$
D = \{x\in B: \mu(x)\notin \mu(N(u))\}
$$ 
$$
Q' = B \setminus D.
$$ 
Then $A',A,C,C',D,F,Q,Q',\{u,w,t\}$ are pairwise disjoint. See Figure~\ref{fig:mainfig} for an illustration of these sets.

\subsection{A few lemmas}

\begin{lemma}\label{lemma:empty}
Suppose $D=\emptyset$. Then $\cqm \ge \col$. Moreover, $\col = \omega(G^2)$ and so $G^2$ contains a clique minor of order $\col$ for which each branch set is a singleton. 
\end{lemma}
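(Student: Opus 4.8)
The plan is to exhibit, under the hypothesis $D=\emptyset$, a clique in $G^2$ whose size equals $\chi(G^2)$. Since trivially $\chi(G^2)\ge\omega(G^2)$ always holds, and $\omega(G^2)\le\eta(G^2)$ always holds (a clique is a clique minor with singleton branch sets), producing such a clique simultaneously proves $\chi(G^2)=\omega(G^2)$, proves $\eta(G^2)\ge\chi(G^2)$, and gives the desired clique minor with singleton (hence path) branch sets. So the entire content is: find a clique of size $\chi(G^2)$.

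First I would use Lemma~\ref{lemma:allcolors}: every color of $\mu$ appears in $N_2[p]$, so $|N_2[p]|\ge\chi(G^2)$, but $N_2[p]$ need not be a clique. The idea is to replace the "bad" vertices of $N_2[p]$ — those not adjacent (in $G^2$) to all of $N_2[p]$ — by color-equivalent substitutes that do form a clique. Recall $N[p]=\{u,w,p\}$ and $N_2[p]$ consists of $p$, its neighbors $u,w$, and the vertices at $G$-distance $2$ from $p$, namely the other vertex-children of $uw$ (the set $C$), plus $N(u)\setminus\{w\}$ and $N(w)\setminus\{u\}$. The candidate clique I would build is centered on the edge $uw$: take $\{u,w,p\}\cup C$ — this is already a clique in $G^2$ since every vertex of $C\cup\{p\}$ is adjacent in $G$ to both $u$ and $w$. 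Then I would adjoin a carefully chosen set of vertices from $N(u)$ and from $N(w)$ (equivalently from the $wt$-side, using the sets $A'$, $Q$, $F$, $C'$, $B$, $\{t\}$ defined before the lemma) so that the whole set stays a clique while still realizing every color. The hypothesis $D=\emptyset$ means $\mu(B)\subseteq\mu(N(u))$, i.e. every color appearing on the $wt$-side children already appears on the $u$-side; this is exactly what lets us discard the $B$-vertices (which are not adjacent to the $C$-vertices, since by Lemma~\ref{unprocessed} $N(b)=\{w,t\}$) and keep a representative of each of their colors on the $u$-side instead.

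Concretely, I would argue as follows. Let $K = N[u]\cup\{p\}\cup C = \{u,w,t,p\}\cup C\cup\bigl(N(u)\setminus\{w,t\}\bigr)$. Every vertex of $N(u)$ is within distance $2$ in $G$ of $u$, so $N[u]$ is a clique in $G^2$; moreover every vertex of $C\cup\{p\}$ is adjacent in $G$ to $u$, so $C\cup\{p\}\subseteq N(u)$ and in fact $K\subseteq N[u]\cup\{p\}$ — wait, more carefully: $K$ is a subset of $N[u]$ together with possibly $p$, but $p\in N(u)$ as well, so $K\subseteq N[u]$, which is a clique in $G^2$. Thus $K$ is a clique in $G^2$, and $|K|=|N[u]|$ minus nothing, i.e. $K=N[u]$. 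Now I must check $\mu(K)$ covers all $\chi(G^2)$ colors. By Lemma~\ref{lemma:allcolors} it suffices to see that $\mu(N_2[p])\subseteq\mu(N[u])$. The vertices of $N_2[p]$ not in $N[u]$ are exactly those in $N(w)\setminus N[u] = \{t\}\cup B$ together with... but $t\in N[u]$ since $t\in N(u)$ (as $ut$ is an edge), so the only ones outside are $B$. And $D=\emptyset$ gives $\mu(B)\subseteq\mu(N(u))\subseteq\mu(N[u])$. Hence $\mu(N_2[p])\subseteq\mu(N[u])=\mu(K)$, so $|K|\ge|\mu(K)|\ge\chi(G^2)$, giving $\omega(G^2)\ge|N[u]|\ge\chi(G^2)\ge\omega(G^2)$; equality throughout.

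The step I expect to be the main obstacle is verifying cleanly that $N_2[p]\setminus N[u]=B$ — i.e. that apart from the $wt$-side children $B$, nothing in the distance-$\le 2$ ball around $p$ lies outside $N[u]$. This requires pinning down the local structure of the $2$-tree around the edges $uw$ and $wt$: one must use that $p$'s parent is $uw$, that $w$'s parent is $ut$, Lemma~\ref{unprocessed} describing the neighborhoods of $B$ and $C$ vertices, and the breadth-first / consecutive-processing conventions to be sure no stray vertex-descendant sneaks in. Once that structural identity is in hand, the hypothesis $D=\emptyset$ does the rest essentially for free, and the coloring-count argument via Lemma~\ref{lemma:allcolors} closes the proof.
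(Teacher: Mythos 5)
Your proposal is correct and is essentially the paper's own argument: both identify $N[u]$ as the clique of $G^2$ that realizes all $\chi(G^2)$ colors, using Lemma~\ref{lemma:allcolors} together with the structural fact $N_2[p]\setminus N[u]=B$ (the paper phrases this as $N_2[p]=N[u]\cup Q'$, which is the same thing once $D=\emptyset$ gives $B=Q'$) and the observation that $D=\emptyset$ means $\mu(B)\subseteq\mu(N(u))$. Your worry about the structural identity $N_2[p]\setminus N[u]=B$ is unfounded: $N_2[p]=N[u]\cup N[w]$, and $N(w)=\{u,t\}\cup B\cup C\cup\{p\}$ with $t,C,p$ all in $N[u]$ (the edges $wc$ and $wb$ for $c\in C$, $b\in B$ have level $\lmax$ and hence are unprocessed by Lemma~\ref{unprocessed}), so the only leftovers are exactly $B$. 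Note also a small slip you corrected mid-sentence: $N(w)\setminus N[u]$ is $B$, not $\{t\}\cup B$, since $t\in N(u)$.
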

\begin{proof}
Since $D=\emptyset$, we have $N_2[p]=N[u]\cup Q'$. So by Lemma~\ref{lemma:allcolors} all colors of $\mu$ are present in $N[u]\cup Q'$. However, $\mu(Q')\subseteq \mu(N[u])$ by the definition of $Q'$. So all colors of $\mu$ are present in $N[u]$. Since $N[u]$ is a clique of $G^2$, it follows that $\col = |N[u]| \le \cq$. Therefore, $\col=\cq\le \cqm$.
\end{proof}

\begin{lemma}\label{lemma:duniquecolor}
For any $d\in D$, no vertex in $N_2[p]$ other than $d$ is colored $\mu(d)$.
\end{lemma}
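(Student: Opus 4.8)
The plan is to first determine $N_2[p]$ explicitly and then rule out, case by case, the existence of a second vertex of colour $\mu(d)$ inside it.

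\medskip
\noindent\emph{Step 1: pin down $N_2[p]$.} I would show that $N_2[p]=\{p\}\cup N[u]\cup B$. Since $p$ is a vertex-child of $uw$ of level $\lmax$, the edges $up$ and $wp$ have level $\lmax$ and so are unprocessed at the end of step $\ell$; arguing exactly as in Lemma~\ref{unprocessed} this gives $N(p)=\{u,w\}$, hence $N[p]=\{p,u,w\}$. Likewise the only edges incident with $w$ that are ever processed are $uw$ and $wt$, whose sets of vertex-children are $C$ and $B$ respectively, so $N(w)=\{u,t\}\cup B\cup C$. Every vertex of $C$ is adjacent to $u$ and $t\in N(u)$, so $N(w)\subseteq N[u]\cup B$, and therefore $N_2[p]=N[p]\cup N(u)\cup N(w)=\{p\}\cup N[u]\cup B$. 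I would also record that $N[u]$ is a clique of $G^2$ (any two of its vertices are joined by a path of length at most $2$ through $u$), so under $\mu$ all vertices of $N[u]$ receive pairwise distinct colours.

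\medskip
\noindent\emph{Step 2: case analysis.} Suppose for contradiction that some $z\in N_2[p]$ with $z\ne d$ satisfies $\mu(z)=\mu(d)$. Since $d\in B$ and $N(d)=\{w,t\}$ we have $d\notin\{p\}\cup N[u]$, so $z$ lies in one of $\{p\}$, $N[u]$, $B\setminus\{d\}$. If $z=p$, then $\mu(d)=\mu(p)$, contradicting Lemma~\ref{pivot}, which says $p$ is the only vertex of colour $\mu(p)$. If $z\in N[u]$, then since $N[u]$ is a clique of $G^2$ the colour $\mu(z)=\mu(d)$ can appear on $N[u]$ only once; by the definition of $D$ we have $\mu(d)\notin\mu(N(u))$, forcing $\mu(d)=\mu(u)$ and $z=u$; but $d$ is adjacent to $u$ in $G^2$ via the length-$2$ path $d\,w\,u$ in $G$, so $\mu(d)\ne\mu(u)$, a contradiction. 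If $z\in B\setminus\{d\}$, then $z$ and $d$ are both adjacent to $w$ in $G$ by Lemma~\ref{unprocessed}, hence adjacent in $G^2$, so $\mu(z)\ne\mu(d)$, again a contradiction. In all cases we reach a contradiction, so $d$ is the unique vertex of $N_2[p]$ with colour $\mu(d)$.

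\medskip
\noindent I expect the only point requiring care to be the computation of $N_2[p]$ in Step~1 — establishing $N(p)=\{u,w\}$ and $N(w)=\{u,t\}\cup B\cup C$ from the breadth-first processing order and the level constraints, and checking $C\subseteq N(u)$ and $t\in N(u)$ so that the union collapses to $\{p\}\cup N[u]\cup B$. Once that is in hand, the three colouring contradictions in Step~2 follow immediately from Lemmas~\ref{pivot} and~\ref{unprocessed} and the definition of $D$.
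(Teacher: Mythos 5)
Your proof is correct. You take a somewhat different route from the paper. The paper observes that any other vertex of colour $\mu(d)$ must lie in $N_2[p]\setminus N_2[d]$, shows this set equals $Q\cup A'$, and then rules out each of $Q$ and $A'$ by using the definition of $D$ (which gives $\mu(d)\notin\mu(Q)\subseteq\mu(N(u))$) together with the relation $\mu(A')=\mu(A)$ and $A\subseteq N_2[d]$. You instead compute $N_2[p]$ directly as $\{p\}\cup N[u]\cup B$ (which is accurate, and incidentally $p\in C\subseteq N(u)$ so this is just $N[u]\cup B$) and run a short case analysis on which piece $z$ could lie in. Your approach is more elementary: it avoids invoking the auxiliary sets $A$, $A'$, $Q$ entirely and never needs the mate correspondence. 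The paper's version is a little slicker in that subtracting $N_2[d]$ immediately kills most cases at once and fits naturally with the machinery ($A'$, $Q$, mates) that the paper has already built and continues to use. Both arguments are sound; the small typo where you write $N[p]\cup N(u)\cup N(w)$ for $N_2[p]$ (it should be $N[p]\cup N[u]\cup N[w]$) is harmless since you use the bracketed form in the conclusion, and the $z=p$ case you flag is already subsumed by $z\in N[u]$.
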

\begin{proof}
Suppose that there is a vertex in $N_2[p] \setminus \{d\}$ with color $\mu(d)$.
Such a vertex must be in $N_2[p]\setminus N_2[d]$. However, $N_2[p]\setminus N_2[d]=Q\cup A'$, but 
$\mu(d)\notin \mu(Q)$ by the definition of $D$ and $\mu(d)\notin \mu(A')=\mu(A)$ as $A\subseteq N_2[d]$. This contradiction proves the result. 
\end{proof}

\begin{lemma}\label{lemma:QQ}
Suppose $D \ne \emptyset$. Then $\mu(Q)=\mu(Q')$.  
\end{lemma}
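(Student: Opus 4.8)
\textbf{Proof proposal for Lemma~\ref{lemma:QQ}.}

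The plan is to show the two inclusions $\mu(Q) \subseteq \mu(Q')$ and $\mu(Q') \subseteq \mu(Q)$ separately, in both cases exploiting the fact that $D \ne \emptyset$ together with the extremal choice of the pivot pair $(\mu, p)$, namely that the minimum level of vertices in $N(p)$ is as large as possible. Recall that $N(p) = \{u, w\}$ (as $uw$ is the parent of $p$), so one of $u, w$ has level $\lmax - 1$ and the other has level $\lmax - 2$; more precisely $ut$ has level $\lmax - 2$, $uw$ has level $\lmax - 1$, so $u$ has level $\lmax - 2$ and $w$ has level $\lmax - 1$. A vertex $d \in D$ is a vertex-child of $wt$, so $d$ has level $\lmax$ and $N(d) = \{w, t\}$ by Lemma~\ref{unprocessed}.

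First I would establish $\mu(Q) \subseteq \mu(Q')$. Take $q \in Q \subseteq N(u) \setminus (C \cup F \cup \{w, t\})$, and consider the color $\mu(q)$. I want to find a vertex of $Q' = B \setminus D$ with this color. The idea is a recoloring/exchange argument: pick any $d \in D$, which by Lemma~\ref{lemma:duniquecolor} is the unique vertex of $N_2[p]$ with color $\mu(d)$. Suppose for contradiction that $\mu(q) \notin \mu(Q')$. Since $q \in N_2[p]$, the color $\mu(q)$ is used somewhere; I would argue that $\mu(q)$ is not present in $N_2[d] = N[\{w,t\}]$ — here one needs that $q$, having level $\le \lmax - 1$ and being a neighbor of $u$ but not of $w$ or $t$, is not adjacent in $G^2$ to $d$, and that no vertex colored $\mu(q)$ sits in $N_2[d] \setminus \{q\}$; the candidates for such a vertex lie in $B \cup \{w, t\} \cup \ldots$, and $\mu(q) \notin \mu(B)$ would follow since $\mu(q) \notin \mu(Q')$ and $\mu(q) \notin \mu(D)$ (as $\mu(q) \in \mu(N(u))$ while $D$'s colors avoid $\mu(N(u))$). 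Then recoloring $d$ with $\mu(q)$ and $p$ with $\mu(d)$ (legal since $d$ was the unique $\mu(d)$-vertex near $p$ and $\mu(q)$ is absent from $N_2[d]$) produces an optimal coloring in which $p$ is still uniquely colored but — and this is the crux — the new coloring either violates the extremality of $(\mu, p)$ or frees a color, giving a contradiction. The symmetric argument, swapping the roles of $Q$ and $Q'$ and using that each $d \in D$ has a neighbor structure confined to $\{w, t\}$, yields $\mu(Q') \subseteq \mu(Q)$.

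The main obstacle I anticipate is pinning down exactly which vertices can carry the color $\mu(q)$ (resp. $\mu(q')$) inside the relevant second neighborhoods, so that the recoloring is genuinely proper — this requires a careful bookkeeping of the partition $A', A, C, C', D, F, Q, Q', \{u,w,t\}$ and of adjacencies in $G^2$ among vertices at levels $\lmax - 2, \lmax - 1, \lmax$, together with Lemma~\ref{unprocessed} to control the edges incident to $B$ and $C$. A secondary subtlety is making the extremality argument bite: one must verify that after the exchange the minimum level in the neighborhood of the (possibly new) uniquely-colored vertex is strictly larger, contradicting the maximality in the choice of $(\mu, p)$, rather than merely equal. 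I expect the inclusion $\mu(Q') \subseteq \mu(Q)$ to be the more delicate of the two, since vertices of $Q'$ may a priori have colors appearing on several vertices of $N_2[p]$, so the uniqueness leverage of Lemma~\ref{lemma:duniquecolor} is available only on the $D$-side and must be transferred across by a two-step recoloring.
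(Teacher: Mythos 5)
Your plan for the inclusion $\mu(Q) \subseteq \mu(Q')$ is on the right track and matches the paper's approach: assume $q \in Q$ with $\mu(q) \notin \mu(Q')$, argue $\mu(q)$ is absent from $\mu(N_2(d))$, recolor $d$ with $\mu(q)$ and then $p$ with $\mu(d)$ (legal by Lemma~\ref{lemma:duniquecolor}). However you are imprecise about where the contradiction comes from. It is not a choice between ``violates extremality'' and ``frees a color'' --- it is simply the latter: since $p$ was the unique holder of $\mu(p)$ under $\mu$, vacating it yields a proper coloring with $\chi(G^2)-1$ colors, contradicting optimality of $\mu$. No extremality of $(\mu,p)$ is invoked here. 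Also, showing $\mu(q)\notin\mu(N_2(d))$ is handled in the paper by observing $N_2(d)\setminus N_2[q]\subseteq A\cup C'\cup Q'\cup D$ and ruling out each of the four sets; the blanket claim ``$\mu(q)$ absent from $N_2[d]=N[\{w,t\}]$'' is correct but you do need to split off $N_2[d]\cap N_2[q]$ (colors there differ from $\mu(q)$ automatically) and handle the complement set by set.

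The genuine gap is in your treatment of $\mu(Q') \subseteq \mu(Q)$. You propose a ``symmetric argument, swapping the roles of $Q$ and $Q'$,'' but the configuration is structurally asymmetric: $Q\subseteq N(u)$ sits next to the pivot $p$ (whose parent is $uw$), whereas $Q'=B\setminus D$ consists of vertex-children of $wt$. There is no analogue of Lemma~\ref{lemma:duniquecolor} for a vertex $q'\in Q'$; indeed the very lemma you are proving implies $\mu(q')\in\mu(Q)$, so $q'$ is \emph{not} the unique vertex of its color in $N_2[p]$, and the two-step recoloring cannot free a color. You flag exactly this obstacle (``uniqueness leverage \ldots only on the $D$-side \ldots must be transferred across'') but do not resolve it, and in fact it cannot be resolved by symmetry. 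The paper's proof of this inclusion is direct and much simpler, requiring neither recoloring nor $D\neq\emptyset$: by definition $\mu(Q')\subseteq\mu(N(u))$, while $\mu(Q')$ is disjoint from $\mu(N_2(Q'))$ --- in particular from $\mu(C\cup F\cup\{w,t\})$, all of which lie in $N_2(Q')$ --- and also disjoint from $\mu(A')=\mu(A)$ since $A\subseteq N(t)\subseteq N_2(Q')$. The only colors left in $N(u)$ are those of $Q$, giving $\mu(Q')\subseteq\mu(Q)$. You should replace your ``symmetric'' plan with this counting argument.
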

\begin{proof}
We prove $\mu(Q')\subseteq \mu(Q)$ first. 
By the definition of $Q'$, $\mu(Q')\subseteq \mu(N(u))$. Clearly, $\mu(Q')\cap \mu(N_2(Q'))=\emptyset$, and $\mu(Q')\cap \mu(A')=\emptyset$ as $\mu(A')=\mu(A)$. Hence $\mu(Q') \subseteq \mu(N(u) \setminus (N_2(Q') \cup A'))$. However, $N(u)\setminus (N_2(Q') \cup A') = Q$. Therefore, $\mu(Q')\subseteq \mu(Q)$.
	
	Now we prove $\mu(Q)\subseteq \mu(Q')$.
	Suppose otherwise. Say, $q \in Q$ satisfies $\mu(q) \notin \mu(Q')$. Since $D \ne \emptyset$ by our assumption, we may take a vertex $d \in D$. 
	We claim that $\mu(q)\notin N_2(d)$.
	This is because $N_2(d)\setminus N_2[q]\subseteq A \cup C' \cup Q' \cup D $, but  $\mu(q)\notin \mu(A)=\mu(A')$, $\mu(q)\notin \mu(C')\subseteq \mu(C)$, $\mu(q)\notin \mu(Q')$, and $\mu(q)\notin \mu(D)$ by the definition of $D$. So we can recolor $d$ with $\mu(q)$. By Lemma~\ref{lemma:duniquecolor}, we can then recolor $p$ with $\mu(d)$. In this way we obtain a proper coloring of $G^2$ with $\chi(G^2)-1$ colors, which is a contradiction. Hence $\mu(Q)\subseteq \mu(Q')$.
\end{proof}

\begin{lemma}
\label{lemma:Aempty}
Suppose $D \ne \emptyset$ but $A=\emptyset$. Then $\cqm \ge \col$. Moreover, $\col = \omega(G^2)$ and so $G^2$ contains a clique minor of order $\col$ for which each branch set is a singleton. 
\end{lemma}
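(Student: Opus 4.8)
The goal is to show that when $D\ne\emptyset$ but $A=\emptyset$, we have $\chi(G^2)=\omega(G^2)$, which immediately gives $\eta(G^2)\ge\omega(G^2)=\chi(G^2)$ with singleton branch sets. The strategy is to exhibit a clique of $G^2$ of size $\chi(G^2)$, and the natural candidate is $N[u]$ together with the set $D$ (or some subset built around $D$). So the plan is: first, count the colors of $\mu$ by locating where each one must appear; second, show that the vertices carrying these colors can be consolidated into a single clique of $G^2$.

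First I would use Lemma~\ref{lemma:allcolors}: every color of $\mu$ appears in $N_2[p]$. Since $A=\emptyset$ we have $A'=\emptyset$ (as $\mu(A')=\mu(A)$ and $A'$ is in bijection with $A$), so $N(u)\setminus(C\cup F\cup\{w,t\})=Q$, and the partition of $N_2[p]$ simplifies: $N_2[p]=N[u]\cup B = N[u]\cup Q'\cup D$. By the definition of $Q'$, $\mu(Q')\subseteq\mu(N(u))$, so every color of $\mu$ is present in $N[u]\cup D$. Moreover, by Lemma~\ref{lemma:duniquecolor} the colors on $D$ are distinct from all colors appearing in $N_2[p]\setminus\{d\}$, in particular distinct from the colors on $N[u]$ and from each other; hence $\mu(D)\cap\mu(N[u])=\emptyset$ and $|\mu(D)|=|D|$. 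Combining, $\chi(G^2)=|\mu(N[u])|+|\mu(D)|=|N[u]|+|D|$, using that $N[u]$ is a clique of $G^2$ so all its vertices get distinct colors.

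Next I would show $N[u]\cup D$ is a clique of $G^2$. $N[u]$ is already a clique of $G^2$ (it is a clique in $G$). Each $d\in D\subseteq B$ satisfies $N_G(d)=\{w,t\}$ by Lemma~\ref{unprocessed}, so $d$ is at distance $\le 2$ in $G$ from every vertex in $N_G[w]\cup N_G[t]$; in particular $d$ is adjacent in $G^2$ to $u$, $w$, $t$, and to every other vertex-child of $uw$ or $wt$ — and, crucially, to the mate-free set $Q\subseteq N(u)$. The point I must verify carefully is that every vertex of $N[u]=\{u,w,t\}\cup C\cup C'\cup F\cup Q$ (using $A'=\emptyset$; note $t\in N(u)$ via $w$? — here one checks $ut\in E(G)$, which holds by the definition of $t$) lies within distance $2$ in $G$ of $w$ or $t$, hence is adjacent to $d$ in $G^2$. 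Since $C,C'$ are vertex-children of $uw$ resp. neighbors of $t$, $F\subseteq N(t)$, and $Q\subseteq N(u)\subseteq N_2[t]$ (as $ut\in E(G)$), every such vertex is within distance $2$ of $t$; thus $d$ is $G^2$-adjacent to all of $N[u]$. Finally, any two $d,d'\in D$ share the neighbor $w$ in $G$, so they are adjacent in $G^2$. Hence $N[u]\cup D$ is a clique of $G^2$ of size $|N[u]|+|D|=\chi(G^2)$, giving $\omega(G^2)\ge\chi(G^2)$; the reverse inequality is trivial, so $\chi(G^2)=\omega(G^2)\le\eta(G^2)$.

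The main obstacle I anticipate is the bookkeeping in the second step: pinning down exactly which sets make up $N[u]$ (the excerpt defines $Q=N(u)\setminus(A'\cup C\cup F\cup\{w,t\})$, and with $A'=\emptyset$ this is $N(u)\setminus(C\cup F\cup\{w,t\})$, so $N[u]=\{u,w,t\}\cup C\cup F\cup Q$ — note $C'\subseteq N(t)$ need not lie in $N(u)$), and then certifying that each of these vertices is within $G$-distance $2$ of $t$ (or of $w$) so that it is $G^2$-adjacent to every $d\in D$. This requires using $ut\in E(G)$ and the local structure around the edges $uw$ and $wt$ given by Lemma~\ref{unprocessed}, but no deeper idea; once the adjacency claims are checked, the color count from the first step closes the argument.
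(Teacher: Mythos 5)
There is a genuine gap in the second half: the set $N[u]\cup D$ you propose is not in general a clique of $G^2$. The slip is in the step ``lies within distance $2$ in $G$ of $w$ or $t$, hence is adjacent to $d$ in $G^2$.'' Since $N_G(d)=\{w,t\}$ by Lemma~\ref{unprocessed}, a vertex $v\neq d$ is $G^2$-adjacent to $d$ iff $v\in N(w)\cup N(t)$, i.e.\ $v$ is at distance $1$ (not distance $\le 2$) from $w$ or $t$. Being in $N_2[t]$ is not enough: $v$ at distance exactly $2$ from $t$ and $d$ adjacent to $t$ can leave $d(v,d)=3$. This matters precisely for the set $Q$. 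Using $N(u)\cap N(w)=\{t\}\cup C$ and $N(u)\cap N(t)=F\cup\{w\}$ (both standard $2$-tree facts about triangles on $uw$, resp.\ $ut$), one sees that $Q=N(u)\setminus(C\cup F\cup\{w,t\})$ is disjoint from $N(w)$ and from $N(t)$; hence every $q\in Q$ has no common $G$-neighbor with $d$, so $q$ and $d$ are \emph{non}-adjacent in $G^2$. Whenever $Q\neq\emptyset$ (equivalently, by Lemma~\ref{lemma:QQ}, $Q'=B\setminus D\neq\emptyset$), your candidate clique fails.

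Your color count is fine and in fact equal to the paper's: $\chi(G^2)=|N[u]|+|D|=|N[w]\cup F|$ since $|Q|=|Q'|$. But the paper exhibits the clique $N[w]\cup F$ rather than $N[u]\cup D$. That set sidesteps the issue entirely: every vertex of $N[w]\cup F=\{u,w,t\}\cup B\cup C\cup F$ is an actual neighbor of $w$ or of $t$ in $G$, so pairwise $G^2$-adjacency is immediate, and Lemma~\ref{lemma:QQ} plus Lemma~\ref{lemma:allcolors} give $\mu(N_2[p])=\mu(N[w]\cup F)$, closing the count. To repair your argument you would have to replace $N[u]\cup D$ by this clique (or otherwise handle $Q$), which brings you essentially to the paper's proof.
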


\begin{proof}
Since $A=\emptyset$, we have $A'=\emptyset$ and $\mu(N_2[p])=\mu(N[w]\cup F)$ by Lemma \ref{lemma:QQ}. By Lemma \ref{lemma:allcolors}, $|\mu(N_2[p])|=\chi(G^2)$. On the other hand, $N[w]\cup F$ is a clique of $G^2$ and so $|\mu(N[w] \cup F)| \le \omega(G^2)$. So $\chi(G^2)=|\mu(N_2[p])|=|\mu(N[w]\cup F)| \le \omega(G^2)$, and therefore $\chi(G^2) = \omega(G^2) \le\eta(G^2)$.
\end{proof}

Due to Lemmas \ref{lemma:empty} and \ref{lemma:Aempty}, in the rest of the proof we assume without mentioning explicitly that $D \ne \emptyset$ and $A \ne \emptyset$. Then $A' \ne \emptyset$ and $\mu(Q)=\mu(Q')$.
 
\begin{lemma}
\label{lmax2}
The following hold:
\begin{itemize}
\item[\rm (a)] $\lmax \ge 3$;
\item[\rm (b)] the level of $u$ is $\lmax-2$.
\end{itemize}
\end{lemma}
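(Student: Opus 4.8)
The plan is to dispose of the two statements separately, in both cases using the standing assumptions $D\neq\emptyset$ and $A\neq\emptyset$. I will first record the elementary fact about the level function: for $j\geq 1$ an edge of level $j$ is an edge-child of an edge of level $j-1$, so it has exactly one endpoint of level $j$ (the vertex created together with it) and one endpoint of level $\leq j-1$; the unique edge of level $0$ has both endpoints of level $0$. Since $w$ is a vertex-child of $ut$, the edge $ut$ has level $\lmax-2$, so either $\lmax=2$, in which case $ut$ is the initial edge and $u,t$ both have level $0$, or $\lmax\geq 3$, in which case exactly one of $u,t$ has level $\lmax-2$ and the other has level $\leq\lmax-3$. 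Hence (b) is the assertion that, when $\lmax\geq 3$, it is $u$ that has level $\lmax-2$, and (a) is the assertion that $\lmax=2$ cannot occur.

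\emph{Proof of (b).} Suppose $\lmax\geq 3$ and the level of $u$ is $\leq\lmax-3$; then the level of $t$ is $\lmax-2$. Choose $d\in D$. As $d$ is a vertex-child of $wt$ and $wt$ has level $\lmax-1$, the vertex $d$ has level $\lmax$, and $N(d)=\{w,t\}$ by Lemma~\ref{unprocessed}. The decisive move is to interchange the colours of $p$ and $d$. This is legitimate: $p$ is the only vertex of $G^2$ coloured $\mu(p)$, and by Lemma~\ref{lemma:duniquecolor} $d$ is the only vertex of $N_2[p]$ (the closed neighbourhood of $p$ in $G^2$) coloured $\mu(d)$; so assigning $\mu(d)$ to $p$ and $\mu(p)$ to $d$ yields an optimal colouring $\mu'$, under which $d$ is the only vertex coloured $\mu(p)$. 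Thus $(\mu',d)$ is an admissible pivot pair in the sense of Lemma~\ref{pivot} and the itemized discussion following it. But the minimum level of the vertices of $N(d)=\{w,t\}$ equals $\min\{\lmax-1,\lmax-2\}=\lmax-2$, whereas the minimum level of the vertices of $N(p)=\{u,w\}$ equals the level of $u$, which is $\leq\lmax-3$. This contradicts the maximality used to select $(\mu,p)$, so the level of $u$ must be $\lmax-2$.

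\emph{Proof of (a).} Suppose $\lmax=2$. Then $u$ and $t$ are the endpoints of the initial edge; write $w=w_1,w_2,\dots,w_k$ for the vertex-children of $ut$, so that $C$ and $B$ are the sets of vertex-children of $uw_1$ and of $w_1t$ respectively and $p\in C$. Every other vertex of $G$ is a vertex-child of some $uw_i$ or of some $w_it$, and no such vertex is adjacent to both $u$ and $t$; hence $N(u)\cap N(t)=\{w_1,\dots,w_k\}$ and $F=\{w_2,\dots,w_k\}$, and a short computation then gives $N_2[p]=N[u]\cup B$ with $N[u]\cap B=\emptyset$, that $A$ consists of the vertex-children $x$ of the edges $w_it$ with $i\geq 2$ for which $\mu(x)\notin\mu(C)$, and that $A'$ is contained in the union over $i\geq 2$ of the sets of vertex-children of the edges $uw_i$. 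Pick $a\in A$, say a vertex-child of $w_jt$ with $j\geq 2$, and let $a'=\coup(a)$, a vertex-child of some $uw_{j'}$ with $j'\geq 2$; since $\mu(a)=\mu(a')$ forces $a$ and $a'$ to be non-adjacent in $G^2$, while a vertex-child of $w_jt$ and a vertex-child of $uw_j$ would share the neighbour $w_j$, we get $j\neq j'$. Pick also $d\in D$, and now recolour twice. First give $a'$ the colour $\mu(d)$: this is legal because $N_2[a']$ is the union of $N[u]$ with the set of vertex-children of $w_{j'}t$, while $\mu(d)\notin\mu(N(u))$ by the definition of $D$, $\mu(d)\neq\mu(u)$ since $d$ and $u$ are adjacent in $G^2$, and $\mu(d)$ differs from the colour of every vertex-child of $w_{j'}t$ since each of those shares the neighbour $t$ with $d$. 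Next give $p$ the colour $\mu(a)$: under the original colouring $\mu(a)$ appears in $N_2[p]=N[u]\cup B$ only on $a'$ (at most once on the clique $N[u]$, and never on $B$ because $a$ shares the neighbour $t$ with every vertex of $B$), and moreover $a\notin N_2[p]$ and $\mu(a)=\mu(a')\in\mu(N(u))$ is distinct from $\mu(d)$; hence after the first recolouring the colour $\mu(a)$ is absent from $N_2[p]\setminus\{p\}$, so $p$ may be given that colour. The resulting colouring of $G^2$ is proper and no longer uses $\mu(p)$, hence uses at most $\col-1$ colours, a contradiction. Therefore $\lmax\geq 3$.

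The step I expect to be the main obstacle is the one behind (a). For (b) the colour interchange $p\leftrightarrow d$ is natural once one realises that $D$ supplies a level-$\lmax$ vertex whose neighbourhood $\{w,t\}$ has minimum level $\lmax-2$, and then maximality of the pivot choice finishes things. For (a) this mechanism is unavailable — when $\lmax=2$ every level-$\lmax$ vertex has a level-$0$ neighbour, so no admissible pair beats the one in hand — and one must instead carry out the explicit double recolouring; the subtle point there is verifying that $\mu(a)$ appears in $N_2[p]$ only on $a'$, which rests on the identity $N_2[p]=N[u]\cup B$, the fact that $N[u]$ is a clique of $G^2$, and the fact that $a$, being adjacent to $t$, is $G^2$-adjacent to every vertex of $B$.
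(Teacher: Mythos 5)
Your proof is correct and follows essentially the same strategy as the paper: for (b), interchange the colours of $p$ and $d$ (legitimised by Lemma~\ref{lemma:duniquecolor}) and contradict the maximality criterion used to choose the pivot pair; for (a), recolour $a'$ with $\mu(d)$ and then $p$ with $\mu(a')$, yielding a proper colouring with fewer colours. The only noteworthy difference is in how part (a) establishes that $\mu(d)$ does not appear in $N_2[a']$: the paper argues concisely from $V(G)=N[\{u,t\}]$ (so any candidate $d_1$ would lie in $N(u)$ or $N(t)$, both excluded), whereas you compute $N_2[a']=N[u]\cup\{\text{vertex-children of }w_{j'}t\}$ explicitly and check each piece. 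Both routes are valid; the paper's is slightly slicker, yours is more self-contained.
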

\begin{proof}
(a) We have assumed $\lmax \ge 2$. Suppose $\lmax=2$ for the sake of contradiction. Take $a'\in A'$ and $d\in D$. Since $\lmax=2$, we have that $ut$ is the only edge with level $0$, and moreover $V(G)=N[\{u,t\}]$. 

We claim that no vertex in $N_2[a']$ is colored $\mu(d)$ under the coloring $\mu$. Suppose otherwise. Say, $d_1$ is such a vertex. Then $d_1\neq d$ as $d \in D$ but $D \cap N_2[a'] = \emptyset$. We have $d_1\notin N(u)$ by the definition of $D$. We also have $d_1 \notin N(t)$ for otherwise two distinct vertices in $N(t)$ have the same color. Thus, $d_1 \notin N(u) \cup N(t) = N[\{u,t\}] = V(G)$, a contradiction. Therefore, no vertex in $N_2[a']$ is colored $\mu(d)$.

So we can recolor $a'$ with color $\mu(d)$ but retain the colors of all other vertices. In this way we obtain another proper coloring of $G^2$. Observe that $a'$ was the only vertex in $N_2[p]$ with color $\mu(a')$ under $\mu$ as $N_2[p] \subseteq \n{a'} \cup N_2(a)$, where $a = \coup(a') \notin N_2[p]$. Since $a'$ has been recolored $\mu(d)$, we can recolor $p$ with $\mu(a')$ to obtain a proper coloring of $G^2$ using fewer colors than $\mu$, but this contradicts the optimality of $\mu$. 

(b) Suppose otherwise. Since the level of $ut$ is $\lmax-2$, the level of $t$ must be $\lmax-2$ and the level of $u$ must be smaller than $\lmax-2$. Take $d \in D$. Denote by $\mu'$ the coloring obtained by exchanging the colors of $d$ and $p$ (while keeping the colors of all other vertices). By Lemma \ref{lemma:duniquecolor}, $\mu'$ is a proper coloring of $G^2$. Note that $d$ is the only vertex with color $\mu'(d) = \mu(p)$ under the coloring $\mu'$. The minimum level of a vertex in $N(d)$ is $\lmax-2$, and the minimum level of a vertex in $N(p)$ is smaller than $\lmax-2$ since the level of $u$ is smaller than $\lmax-2$. However, this means that we would have selected respectively $\mu'$ and $d$ as the pivot coloring and pivot vertex instead of $\mu$ and $p$, which is a contradiction.
\end{proof}

In the sequel we fix a vertex $s \in F$ such that $ut$ is a child of $st$. The existence of $s$ is ensured by Lemma \ref{lmax2}. Note that the level of $st$ is $l_{max}-3$, and $us$ is the sibling of $ut$ and has level $\lmax-2$.  

\subsection{Bichromatic paths}

\begin{definition}
\emph{Given a proper coloring $\phi$ of $G^2$ and two distinct colors $r$ and $g$, a path in $G^2$ is called a \emph{$(\phi,r,g)$-bichromatic path} if its vertices are colored $r$ or $g$ under the coloring $\phi$.}
\end{definition}
\begin{lemma}
	\label{lemma:pathD}
For any $a'\in A'$ and $d\in D$, there exists a $(\mu,\mu(a'),\mu(d))$-bichromatic path from $a'$ to $\coup(a')$ in $G^2$. 
\end{lemma}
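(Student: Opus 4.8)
The plan is a standard Kempe-chain argument, but with the twist that we must track \emph{which} color class the chain ends in. Fix $a' \in A'$ and $d \in D$, and set $a = \coup(a')$, $r = \mu(a') = \mu(a)$, $g = \mu(d)$. Let $P$ be the connected component of the subgraph of $G^2$ induced by the vertices colored $r$ or $g$ that contains $a'$. The key observation I would use is that $P$ is not merely a connected subgraph but can be taken to be a path: the vertices colored $r$ form an independent set in $G^2$, as do those colored $g$, so $P$ is bipartite with parts alternating in color, and — crucially — in $G^2$ a Kempe component need not be a path in general, so I need to argue more carefully. Actually the cleanest route: suppose for contradiction that $\coup(a') = a$ is \emph{not} in $P$. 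Then $a$ and $a'$ lie in different Kempe $(r,g)$-components, so we may swap colors $r \leftrightarrow g$ along $P$ (recoloring exactly the vertices of $P$) to get a new proper coloring $\mu^*$ of $G^2$.

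**Deriving the contradiction.** Under $\mu^*$, the vertex $a'$ now has color $g = \mu(d)$. I then want to recolor $p$ with color $r = \mu(a')$ and thereby save a color, contradicting optimality of $\mu$. For this I must check that after the swap, color $r$ no longer appears in $N_2[p]$ at all. Recall (as used in the proof of Lemma~\ref{lmax2}(a)) that $a'$ was the \emph{only} vertex in $N_2[p]$ colored $r$ under $\mu$, because $N_2[p] \subseteq \n{a'} \cup N_2(a)$ with $a = \coup(a') \notin N_2[p]$, and $N_2(a')$ contains no $r$-colored vertex while $N_2(a) \cap N_2[p]$ can only meet color $r$ at $a$ itself. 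So under $\mu$ the only $r$-vertex in $N_2[p]$ is $a'$. After the swap, $a'$ is recolored $g$; I must verify the swap does not \emph{introduce} a new $r$-vertex into $N_2[p]$. A new $r$-vertex in $N_2[p]$ would be some $g$-colored vertex of $P$ lying in $N_2[p]$; but $g = \mu(d)$, and the relevant fact here is that the $g$-colored vertices reachable along the chain from $a'$ must, by the structure of $N_2[p]$, either equal $d$ or lie outside $N_2[p]$ — this is the place where I expect to need the careful case analysis with the sets $Q, A', C', Q', D$, paralleling Lemmas~\ref{lemma:duniquecolor} and~\ref{lemma:QQ}. Once color $r$ is absent from $N_2[p]$ after the swap, I recolor $p$ with $r$ to get a proper coloring with $\chi(G^2) - 1$ colors, a contradiction. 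Hence $a \in P$, i.e., there is an $(r,g)$-bichromatic walk from $a'$ to $a$; taking a shortest such walk yields a path, which is the desired $(\mu, \mu(a'), \mu(d))$-bichromatic path.

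**The main obstacle.** The delicate point is not the Kempe swap itself but controlling the interaction of the $(r,g)$-chain with the neighborhood $N_2[p]$: I must rule out that swapping colors along $P$ accidentally places an $r$-colored vertex back inside $N_2[p]$ (other than handling $a'$), and symmetrically that the chain behaves well near $d$. Concretely I would decompose $N_2[p]$ using the partition into $N[w]$, $F$, $A'$, $Q$, $C'$, $Q'$, $D$ (as in Figure~\ref{fig:mainfig}) and check, color class by color class, that the only vertices of $P$ meeting $N_2[p]$ are among $\{a'\} \cup (D \cap \{d\})$ — using $\mu(D) \cap \mu(N(u)) = \emptyset$ (definition of $D$), $\mu(A') = \mu(A)$ with $A \subseteq N_2[d]$, and $\mu(Q) = \mu(Q')$ from Lemma~\ref{lemma:QQ}. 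If even after this one cannot pin the chain down completely, the fallback is to argue by a minimal counterexample on the length of the chain, peeling off one vertex at a time and invoking Lemma~\ref{lemma:duniquecolor} to reset $p$; but I anticipate the direct Kempe argument with the $N_2[p]$-decomposition will suffice.
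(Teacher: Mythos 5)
Your Kempe-chain setup is exactly the paper's argument: let $H'$ be the $(r,g)$-component of $a'$, assume $a = \coup(a') \notin V(H')$, swap colors on $V(H')$, then recolor $p$ with $r$. You have also correctly identified (and correctly justified, via $N_2[p] \subseteq \n{a'} \cup N_2(a)$) that $a'$ is the unique $r$-colored vertex in $N_2[p]$ under $\mu$, and that after the swap the only possible new $r$-vertex in $N_2[p]$ would have to be $g$-colored under $\mu$ and hence, by Lemma~\ref{lemma:duniquecolor}, equal $d$.

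However, you stop at precisely the point that closes the proof: you say the $g$-vertices reachable from $a'$ ``must either equal $d$ or lie outside $N_2[p]$,'' but you do not rule out the first option, and you even flag this as the delicate step you ``expect to need'' more case analysis for. If $d$ \emph{were} in $V(H')$, then $d$ (which lies in $N_2[p]$ since $d \in B \subseteq N(w)$) would receive color $r$ after the swap and the recoloring of $p$ would fail. The paper's way to exclude this is short and needs none of the partition bookkeeping with $Q, C', Q'$ that you anticipate: since $N(d) = \{w,t\}$, we have $N_2[d] = N[w] \cup N[t]$, and one checks (using $\mu(a) = r \notin \mu(C)$ from $a \notin C'$, $r \notin \mu(D)$ since $r \in \mu(N(u))$, $r \notin \mu(Q')=\mu(Q)$ since $a' \in A'$, and $N[t]$ a clique) that $a$ is the \emph{only} $r$-colored vertex of $N_2[d]$. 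Any bichromatic $(r,g)$-path from $a'$ to $d$ would have to enter $d$ through an $r$-colored neighbor of $d$ in $G^2$, hence through $a$; but $a \notin V(H')$ by assumption, so $d \notin V(H')$. Once you add this one observation, your argument matches the paper's proof; your ``minimal counterexample on chain length'' fallback is unnecessary.
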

\begin{proof}
	We will use the well known Kempe chain technique.
Let $a=\coup(a')$. Denote $r = \mu(a')$ ($=\mu(a)$) and $g = \mu(d)$. Then $r \ne g$ as $d \in N_2(a)$. Consider the subgraph $H$ of $G^2$ induced by the set of vertices with colors $r$ and $g$ under $\mu$. Let $H'$ be the connected component  of $H$ containing $a'$. It suffices to show that $a$ is contained in $H'$.

Suppose to the contrary that $a \not \in V(H')$. Define
$$
\mu'(v) = \left\{ 
\begin{array}{ll}
\mu(v), & \mbox{if } v\in V(G)\setminus (V(H')\cup \{p\}) \\
r, & \mbox{if } v = p \\
r, & \mbox{if $v\in V(H')$ and $\mu(v)=g$} \\
g, & \mbox{if $v\in V(H')$ and $\mu(v)=r$.}  
\end{array}
\right.
$$
In particular, $\mu'(a')=g$. We will prove that $\mu'$ is a proper coloring of $G^2$, which will be a contradiction as $\mu'$ uses less colors than $\mu$. Since exchanging colors $r$ and $g$ within $H'$ does not produce an improper coloring, in order to prove $\mu'$ is proper, it suffices to prove that $N_2(p)$ does not contain any vertex with color $\mu'(p)$ under $\mu'$. Suppose otherwise. Say, $v\in N_2(p)$ satisfies $\mu'(v)=\mu'(p)=r$. Consider first the case when $v\in V(H')$. In this case, we have $\mu(v)=g$, and so $v=d$ since by Lemma~\ref{lemma:duniquecolor}, $d$ is the only vertex in $N_2[p]$ with color $g$ under $\mu$. On the other hand, $d\notin V(H')$ as $a\notin V(H')$ is the only vertex in $N_2[d]$ with color $r$ under $\mu$. Hence $v\notin V(H')$, which is a contradiction. Now consider the case when $v\notin V(H')$. In this case, we have $\mu(v)=r$. Since $N_2[p]\subseteq \n{a'}\cup N_2(a)$, $a'$ is the only vertex in $N_2[p]$ with color $r$ under $\mu$. So $v=a'\in V(H')$, which is again a contradiction.
\end{proof}

\begin{lemma}\label{lemma:l-1}
For any edge $e=xy$ with level $\lmax -2$ and any vertex-descendant $z$ of $e$, we have $N_2(z)\subseteq N[\{x, y\}]$.
\end{lemma}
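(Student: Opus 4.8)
The plan is to unwind the tree structure of the $2$-tree near level $\lmax$, using the two facts recorded in the Prelude: the construction proceeds in breadth-first order, and no edge of level $\lmax$ has been processed in $G=T_\ell$. Since $e=xy$ has level $\lmax-2$, every edge-descendant of $e$ has level $\lmax-2$, $\lmax-1$ or $\lmax$; the only one of level $\lmax-2$ is $e$ itself, the ones of level $\lmax-1$ are exactly the edge-children of $e$ (namely the edges $xw$ and $yw$ for $w$ a vertex-child of $e$), and those of level $\lmax$ are unprocessed. Hence every vertex-descendant $z$ of $e$ is either a vertex-child of $e$, of level $\lmax-1$, or a vertex-child of an edge-child of $e$, of level $\lmax$; these are the two cases I would treat.

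Before splitting into cases I would record two structural claims. First, exactly as in the proof of Lemma~\ref{unprocessed}: if $v$ is any vertex of level $\lmax$ whose parent edge is $fg$, then $N(v)=\{f,g\}$, since the edges $vf$ and $vg$ have level $\lmax$ and are therefore unprocessed, so $v$ acquires no neighbour beyond $f$ and $g$. Second: if $w$ is a vertex-child of $e=xy$, then every neighbour of $w$ other than $x$ and $y$ is a vertex-child of $xw$ or of $yw$; indeed the only edges ever incident to $w$ are $xw$ and $yw$ (of level $\lmax-1$) together with the level-$\lmax$ edges produced by processing $xw$ or $yw$, and the latter are unprocessed, so any neighbour of $w$ introduced after $w$ must be a vertex-child of $xw$ or of $yw$. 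In particular each such neighbour lies in $N(x)\cup N(y)$, whence $N[w]\subseteq N[\{x,y\}]$.

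In the first case, $z$ is a vertex-child of $e$, so the second claim gives $N[z]\subseteq N[\{x,y\}]$, and every neighbour $v$ of $z$ outside $\{x,y\}$ has level $\lmax$ with $N[v]\subseteq\{v,z,x,y\}\subseteq N[\{x,y\}]$ by the first claim. Since $N_2[z]=\bigcup_{v\in N[z]}N[v]$ and each of $N[x]$, $N[y]$, $N[z]$ and these $N[v]$ is contained in $N[\{x,y\}]$, we conclude $N_2(z)\subseteq N_2[z]\subseteq N[\{x,y\}]$. In the second case, $z$ has level $\lmax$ and is a vertex-child of an edge-child $xw$ of $e$, where $w$ is a vertex-child of $e$; the first claim gives $N(z)=\{x,w\}$, so $N_2[z]=N[x]\cup N[w]\cup N[z]$, and here $N[x]\subseteq N[\{x,y\}]$ trivially, $N[w]\subseteq N[\{x,y\}]$ by the second claim, and $N[z]=\{z,x,w\}\subseteq N[\{x,y\}]$ because $w\in N(x)$. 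In both cases $N_2(z)\subseteq N[\{x,y\}]$, as required.

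The only genuine work lies in the two structural claims, that is, in the bookkeeping of which edges have and have not been processed: one must invoke the breadth-first order and the stopping rule $G=T_\ell$ to be certain that a level-$\lmax$ vertex really has degree $2$ and that a vertex-child of $e$ gains new neighbours only through its two incident level-$(\lmax-1)$ edges. After that the argument is a mechanical expansion of $N[\cdot]$ and $N_2[\cdot]$.
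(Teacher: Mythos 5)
Your proof is correct and follows essentially the same approach as the paper's: split into the two cases according to whether $z$ is a vertex-child of $e$ or of an edge-child of $e$, and then use the fact that level-$\lmax$ vertices have only their two parents as neighbours to bound $N_2(z)$. You are more explicit than the paper in isolating the two structural claims and expanding $N_2[z]$ as $\bigcup_{v\in N[z]} N[v]$, but the underlying argument is identical.
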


\begin{proof}
Consider an arbitrary vertex $v$ in $N_2(z)$. Since the level of $e$ is $\lmax-2$, there are only two possibilities for $z$. The first possibility is that $z$ is a vertex-child of $e$. In this possibility, either $v$ is a vertex-child of $x z$ or $yz$, or $v \in \{x, y\}$,  or $v \in N(x) \cup N(y)$; in each case we have $v \in N[\{x, y\}]$. The second possibility is that $z$ is the vertex-child of an edge-child of $e$. Without loss of generality we may assume that $z$ is the vertex-child of $x q$, where $q$ is a vertex-child of $e$. Then either $v$ is a vertex-child of $yq$ or $v \in N[x]$; in each case we have $v \in N[\{x, y\}]$.
\end{proof}

\begin{lemma}
\label{lemma:n2adjacentuts}
The following hold:
\begin{itemize}
\item[\rm (a)] $N_2(A'\cup Q)\subseteq N[\{u,t,s\}]$;
\item[\rm (b)] if $v\in N_2(A'\cup Q)$ and $\mu(v)\in \mu(B)$, then $v\in N(\{u,s\})$;
\item[\rm (c)] if $v\in N_2(A'\cup Q)$ and $\mu(v)\in \mu(D)$, then $v\in N(s)$.
\end{itemize}
\end{lemma}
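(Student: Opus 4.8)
The plan is to derive part (a) from Lemma~\ref{lemma:l-1} after pinning down the structure of $A'\cup Q$, and then to read off parts (b) and (c) from (a) using only that $\mu$ is a proper colouring of $G^2$. Recall from Lemma~\ref{lmax2} that $u$ has level $\lmax-2$, and that $s$ was chosen so that $ut$ is an edge-child of $st$; hence $st$ has level $\lmax-3$ and its two edge-children $su$ and $ut$ both have level $\lmax-2$. The core claim is that every $z\in A'\cup Q$ is a vertex-descendant of $su$ or of $ut$. Granting this, for each $z\in A'\cup Q$ Lemma~\ref{lemma:l-1} applied to the level-$(\lmax-2)$ edge $su$ or $ut$ gives $N_2(z)\subseteq N[\{s,u\}]$ or $N_2(z)\subseteq N[\{u,t\}]$, hence $N_2(z)\subseteq N[\{u,t,s\}]$ in either case; it follows that $N_2(A'\cup Q)\subseteq N[\{u,t,s\}]$, which is part (a).

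To prove the core claim, recall $A'\cup Q=N(u)\setminus(C\cup F\cup\{w,t\})$. Let $z\in A'\cup Q$. Since $s\in F$ and $t$ are both excluded, $z\notin\{s,t\}$, so the parent edge of $z$ is an edge incident to $u$, and therefore an edge-descendant of $su$ or of $ut$. By the breadth-first processing order, at the end of step $\ell$ no edge of level $\ge\lmax$ has been processed; hence the only edges incident to $u$ that have a vertex-child are $su$ and $ut$ (level $\lmax-2$) together with their edge-children $ux$ ($x$ a vertex-child of $su$) and $uf$ ($f$ a vertex-child of $ut$) (level $\lmax-1$). A vertex-child of $ut$ lies in $N(u)\cap N(t)=F\cup\{w\}$, and a vertex-child of $uw$ lies in $C$, so $z$ is of neither type; thus $z$ is a vertex-child of $su$ or of some $ux$ (so a vertex-descendant of $su$), or else a vertex-child of some $uf$ with $f$ a vertex-child of $ut$ different from $w$ (so a vertex-descendant of $ut$, since $uf$ is an edge-child of $ut$). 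This proves the core claim, and hence part (a).

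For parts (b) and (c) I would first record two observations. (i) $B\cap N_2(A'\cup Q)=\emptyset$, and therefore also $D\cap N_2(A'\cup Q)=\emptyset$: by Lemma~\ref{unprocessed} every $b\in B$ has $N(b)=\{w,t\}$, so $N_2[b]=\{b\}\cup N[w]\cup N[t]$; since $w$ has level $\lmax-1$ with parent $ut$, the breadth-first order gives $N(w)=\{u,t\}\cup C\cup B$, and $N(u)\cap N(t)=F\cup\{w\}$ by the definition of $F$; as no vertex of $A'\cup Q$ lies in $\{u,t,w\}\cup C\cup B\cup F$, we get $(A'\cup Q)\cap N_2[b]=\emptyset$, i.e.\ no $b\in B$ lies in $N_2(A'\cup Q)$. (ii) Each of $u,s,t$ is adjacent in $G^2$ to every $b\in B$ ($u$ shares the neighbour $w$ with $b$, and $s,t$ share the neighbour $t$ with $b$), and every vertex adjacent to $t$ in $G$ shares the neighbour $t$ with every $b\in B$; since $\mu$ is proper on $G^2$, no vertex of $\{u,s,t\}$ has a colour in $\mu(B)$, and any vertex adjacent to $t$ whose colour lies in $\mu(B)$ must itself belong to $B$.

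Now let $v\in N_2(A'\cup Q)$, so $v\in N[u]\cup N[t]\cup N[s]$ by part (a). For (b), suppose $\mu(v)\in\mu(B)$; then by (ii) $v\notin\{u,s,t\}$, and $v$ is not adjacent to $t$, since otherwise (ii) and (i) would force $v\in B\cap N_2(A'\cup Q)=\emptyset$; hence $v\notin N[t]$, and therefore $v\in(N[u]\cup N[s])\setminus\{u,s\}=N(\{u,s\})$. For (c), suppose $\mu(v)\in\mu(D)\subseteq\mu(B)$; then, exactly as in (b), $v\notin\{u,s,t\}$ and $v\notin N[t]$; moreover $\mu(D)\cap\mu(N(u))=\emptyset$ by the definition of $D$, so $v$ is not adjacent to $u$, and $v\ne u$ already holds, hence $v\notin N[u]$; therefore $v\in N[s]\setminus\{s\}=N(s)$. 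The step I expect to be the main obstacle is the core claim used for part (a): one must track the levels and use the breadth-first order carefully to verify that, at the end of step $\ell$, the only edges incident to $u$ with a vertex-child are $su$, $ut$ and their level-$(\lmax-1)$ edge-children. Once part (a) is in hand, (b) and (c) follow quickly from the properness of $\mu$ on $G^2$, Lemma~\ref{unprocessed}, and the definition of $D$.
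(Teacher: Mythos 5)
Your proof is correct and follows essentially the same route as the paper's: part (a) is reduced to Lemma~\ref{lemma:l-1} by showing every vertex of $A'\cup Q$ is a vertex-descendant of $us$ or $ut$, and then (b) and (c) are read off from (a) using properness of $\mu$ on $N(t)$, the fact that $B\cap N_2(A'\cup Q)=\emptyset$, and the definition of $D$. The only substantive difference is that you spell out the ``core claim'' for (a) (tracking levels and breadth-first order to show that the only processed edges incident to $u$ are $us$, $ut$ and their level-$(\lmax-1)$ edge-children) and the auxiliary facts $B\cap N_2(A'\cup Q)=\emptyset$ and $\mu(\{u,s,t\})\cap\mu(B)=\emptyset$, whereas the paper asserts the former without proof and folds the latter into a one-line case distinction; your version is a more detailed rendering of the same argument, and it is sound.
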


\begin{proof}
(a) Any vertex $x\in A'\cup Q$ is a vertex-descendant of $ut$ or $us$. Since the levels of $ut$ and $us$ are both $\lmax-2$, by Lemma~\ref{lemma:l-1}, if $x$ is a vertex-descendant of $ut$ then $N_2(x)\subseteq N[\{u,t\}]$, and if $x$ is a vertex-descendant of $us$ then $N_2(x)\subseteq N[\{u,s\}]$. Therefore, $N_2(x)\subseteq N[\{ u,t,s\}]$.

(b) Consider $v\in N_2(x)$ for some $x\in A' \cup Q$ such that $\mu(v)\in \mu(B)$. Since $v\in N[\{u,t,s\}]$ by (a),  
it suffices to prove $v\notin N[t]$. Suppose otherwise. Since $\mu(v)\in \mu(B)$, if $v \not \in B$, then both $v \in N[t]$ and another neighbor of $t$ in $B$ have color $\mu(v)$, a contradiction. Hence $v\in B$. Since $N_2(x)\cap B=\emptyset$, we then have $v\notin N_2(x)$, but this is a contradiction.

(c) By (b), every vertex $v\in N_2(A'\cup Q)$ with $\mu(v)\in \mu(D)$ must be in $N(\{u,s\})$. If $v \in N(u)$, then $\mu(v) \in \mu(N(u))$ and so $\mu(v) \notin \mu(D)$ by the definition of $D$, a contradiction. Hence $v \notin N(u)$ and therefore $v\in N(s)$.
\end{proof}

Define 
$$
D' = \{x\in N(s): \mu(x)\in\mu(D)\}.
$$

\begin{lemma}
\label{lemma:pathd-adj}
The following hold:
	\begin{itemize}
		\item[\rm (a)] $\mu(D')=\mu\left( D \right)$;
		\item[\rm (b)] for any $a'\in A'$ and $d'\in D'$, there exists a $(\mu,\mu(a'),\mu(d'))$-bichromatic path in $G^2$ from $a'$ to $\coup(a')$ such that $d'$ is adjacent to $a'$ in this path. 
	\end{itemize}
\end{lemma}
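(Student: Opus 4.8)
The plan is to handle the two parts separately, with part~(a) being a short counting/coloring argument and part~(b) being the substantive step that refines Lemma~\ref{lemma:pathD} by controlling the \emph{first} vertex on the bichromatic path.

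For part~(a), I would argue exactly as in the passage from $A$ to $A'$. First, $\mu(D') \subseteq \mu(D)$ holds by definition of $D'$. For the reverse inclusion, fix $d \in D$; I want a vertex of $N(s)$ colored $\mu(d)$. By Lemma~\ref{lemma:duniquecolor}, $d$ is the unique vertex of $N_2[p]$ with color $\mu(d)$, so $\mu(d) \notin \mu(N_2(d))$ and, since $\mu(d) \in \mu(N_2[p])$ by Lemma~\ref{lemma:allcolors}, we get $\mu(d) \in \mu(N_2[p] \setminus N_2[d])$. The set $N_2[p] \setminus N_2[d]$ should be contained in something small adjacent to $s$; here I would use the structure around $st$ and $us$ (note $s \in F$, $ut$ is a child of $st$, and the levels of $st$, $us$, $ut$ are $\lmax-3$, $\lmax-2$, $\lmax-2$) together with Lemma~\ref{lemma:n2adjacentuts}(c), which already tells us that any vertex in $N_2(A'\cup Q)$ colored with a color of $\mu(D)$ lies in $N(s)$. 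Combining, $\mu(d)$ is realized by some vertex of $N(s)$, i.e.\ $\mu(d) \in \mu(D')$. Since no two vertices of $N(s)$ share a color, $D'$ is in bijection with $D$ via $\mu$, and in particular $\mu(D') = \mu(D)$.

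For part~(b), fix $a' \in A'$ and $d' \in D'$, and let $d \in D$ be the (unique) vertex with $\mu(d) = \mu(d')$. Write $r = \mu(a')$, $g = \mu(d')$. I would start from the $(\mu, r, g)$-bichromatic path $P$ from $a'$ to $a = \coup(a')$ guaranteed by Lemma~\ref{lemma:pathD} (note this is legitimate since $g = \mu(d) = \mu(d')$, so ``colors $r,g$'' is the same colour pair regardless of whether we name the $g$-vertex $d$ or $d'$). The vertex of $P$ immediately after $a'$ is some neighbor $x$ of $a'$ in $G^2$ with $\mu(x) = g$; so $x$ is a $g$-colored vertex in $N_2(a') \subseteq N_2(A' \cup Q)$. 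By Lemma~\ref{lemma:n2adjacentuts}(c) (applied with $v = x$, which has $\mu(x) = g \in \mu(D)$), we get $x \in N(s)$, hence $x \in D'$ by definition of $D'$. Since $D'$ contains exactly one vertex of each colour in $\mu(D)$, and $\mu(x) = g = \mu(d')$, we conclude $x = d'$. Thus $d'$ is the second vertex of $P$, i.e.\ $d'$ is adjacent to $a'$ in $P$, which is what we wanted.

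The main obstacle I anticipate is part~(a): pinning down exactly which vertices $N_2[p] \setminus N_2[d]$ can contain and verifying they all lie in $N(s)$, i.e.\ making sure the bound that feeds into Lemma~\ref{lemma:n2adjacentuts}(c) is actually available. This requires carefully tracking the level/descendant structure of $d$ (a vertex-child of $wt$, which has level $\lmax-1$) relative to $u$, $t$, $s$ and the edges $ut$, $us$, $st$; the earlier lemmas (Lemmas~\ref{lemma:l-1} and~\ref{lemma:n2adjacentuts}) should supply everything needed, but the bookkeeping is where an error could creep in. Part~(b), by contrast, is essentially a one-line consequence of Lemma~\ref{lemma:pathD}, Lemma~\ref{lemma:n2adjacentuts}(c), and the bijectivity of $\mu$ on $D'$ established in part~(a); the only point requiring a moment's care is the observation that the colour pair does not change when we replace $d$ by $d'$.
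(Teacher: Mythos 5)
Your part~(b) argument is essentially correct and matches the paper's approach: apply Lemma~\ref{lemma:pathD} to get a $(\mu,r,g)$-bichromatic path from $a'$ to $\coup(a')$, note that the vertex $x$ of the path adjacent to $a'$ satisfies $\mu(x)\in\mu(D)$ and $x\in N_2(A'\cup Q)$ (which requires the small observation that $x\notin A'\cup Q$ since $\mu(x)\in\mu(D)$ is disjoint from $\mu(N(u))$), invoke Lemma~\ref{lemma:n2adjacentuts}(c) to get $x\in N(s)$, hence $x\in D'$, and then use the fact that vertices of $N(s)$ have distinct colors to conclude $x=d'$. The paper phrases the same idea slightly differently (it shows that the neighbour of $a'$ is independent of the choice of $a'$, by comparing two paths and noting that both neighbours lie in $N(s)$ and share a colour), but the content is the same.

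Your part~(a) argument, however, is broken. You try to imitate the derivation of $\mu(A)\subseteq\mu(N(u))$ in Lemma~\ref{lemma:AA}: from $\mu(d)\notin\mu(N_2(d))$ and $\mu(d)\in\mu(N_2[p])$ you conclude $\mu(d)\in\mu(N_2[p]\setminus N_2[d])$. This conclusion is false. The vertex of $N_2[p]$ that realises the colour $\mu(d)$ is $d$ itself, and $d\in N_2[d]$, so nothing is left over. Indeed, $N_2[p]\setminus N_2[d]=Q\cup A'$, and the content of Lemma~\ref{lemma:duniquecolor} is precisely that $\mu(d)\notin\mu(Q\cup A')$: your claimed inclusion is the exact negation of what that lemma establishes. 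The $A$-to-$A'$ argument works only because $a\notin N_2[p]$; here $d\in B\subseteq N_2(p)$, so the analogy fails at the first step. The correct route to part~(a) is the same bichromatic-path argument you use for part~(b): for any $d\in D$ and any $a'\in A'$, Lemma~\ref{lemma:pathD} supplies a path from $a'$ to $\coup(a')$, the vertex on it adjacent to $a'$ has colour $\mu(d)$ and lies in $N_2(A'\cup Q)$, and Lemma~\ref{lemma:n2adjacentuts}(c) then places it in $N(s)$, giving $\mu(d)\in\mu(N(s))$, i.e.\ $\mu(D)\subseteq\mu(D')$. Note also that the paper needs to verify that this vertex is the same for every choice of $a'$ (so that the map $d\mapsto d'$ is well-defined); your part~(b) also yields this, since each colour of $\mu(D)$ is realised by at most one vertex of $N(s)$.
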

\begin{proof}
	Let $d$ be an arbitrary vertex in $D$.	
	Let $a_1'$ and $a_2'$ be arbitrary vertices in $A'$.
	By Lemma~\ref{lemma:pathD}, there exists a $(\mu,\mu(a'_1),\mu(d))$-bichromatic path $P_1$ from $a_1'$ to $\coup(a_1')$, and there exists a $(\mu,\mu(a'_2),\mu(d))$-bichromatic path $P_2$ from $a_2'$ to $\coup(a_2')$. Note that $P_1$ and $P_2$ each has at least three vertices.
	Let $d_1$  be the vertex adjacent to $a_1'$ in $P_1$ and $d_2$ the vertex adjacent to $a_2'$ in $P_2$.
	Clearly, $\mu(d_1)=\mu(d_2)=\mu(d)$.
	By Lemma~\ref{lemma:n2adjacentuts}(c), both $d_1$ and $d_2$ are in  $N(s)$, and hence $d_1\in N_2[d_2]$. This together with $\mu(d_1)=\mu(d_2)$ implies $d_1=d_2$.
	Thus, for any $d\in D$, there exists $d'\in N(s)$ with $\mu(d')=\mu(d)$ such that for each $a'\in A'$, there exists a $(\mu,\mu(a'),\mu(d))$-bichromatic path from $a'$ to $\coup(a')$ that passes through the edge $a' d'$. Both statements in the lemma easily follow from the statement in the previous sentence.  
\end{proof}

Since no two vertices in $D$ ($D'$, respectively) are colored the same, by Lemma \ref{lemma:pathd-adj} we have $|D| = |D'|$ and every $d' \in D'$ corresponds to a unique $d \in D$ such that $\mu(d) = \mu(d')$, and vice versa. We call $d$ and $d'$ the mates of each other, written $d = \coup(d')$ and $d' = \coup(d)$. Lemma \ref{lemma:pathd-adj} implies the following results (note that for $a'\in A'$ and $d'\in D'$, $\coup(a')$ is adjacent to $\coup(d')$ in $G^2$). 

\begin{corollary}
\label{corollary:dadja'}
The following hold:
\begin{itemize}
\item[\rm (a)] each $a'\in A'$ is adjacent to each $d'\in D'$ in $G^2$;
\item[\rm (b)] for any $a'\in A'$ and $d'\in D'$, there exists a $(\mu,\mu(a'),\mu(d'))$-bichromatic path from $d'$ to $\coup(d')$ in $G^2$.
\end{itemize}	
\end{corollary}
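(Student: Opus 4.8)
The plan is to read off both statements directly from Lemma~\ref{lemma:pathd-adj}(b) together with the adjacency $\coup(a')\coup(d')\in E(G^2)$ recorded just before the corollary. That adjacency is itself immediate: $\coup(a')\in A\subseteq N(t)$, while $\coup(d')\in D\subseteq B$ is a vertex-child of $wt$ and hence also lies in $N(t)$; moreover $\coup(a')\ne\coup(d')$ because $\mu(A)\cap\mu(D)=\emptyset$ (as $\mu(A)=\mu(A')\subseteq\mu(N(u))$ while $\mu(D)\cap\mu(N(u))=\emptyset$ by the definition of $D$). Being two distinct neighbours of $t$ in $G$, the vertices $\coup(a')$ and $\coup(d')$ are therefore adjacent in $G^2$.

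For part~(a), I would fix $a'\in A'$ and $d'\in D'$ and simply apply Lemma~\ref{lemma:pathd-adj}(b): it produces a $(\mu,\mu(a'),\mu(d'))$-bichromatic path from $a'$ to $\coup(a')$ on which $d'$ is adjacent to $a'$. In particular $a'd'$ is an edge of $G^2$, which is exactly the assertion of part~(a).

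For part~(b), write $a=\coup(a')$ and $d=\coup(d')$, so that $\mu(d)=\mu(d')$ and $ad\in E(G^2)$ by the first paragraph. Let $P$ be the path supplied by Lemma~\ref{lemma:pathd-adj}(b): it runs from $a'$ to $a$, every vertex of $P$ is coloured $\mu(a')$ or $\mu(d')$, and $d'$ is the neighbour of $a'$ on $P$. Deleting the endpoint $a'$ from $P$ yields a $(\mu,\mu(a'),\mu(d'))$-bichromatic path $P'$ from $d'$ to $a$ (with $d'\ne a$, again since $\mu(A)\cap\mu(D)=\emptyset$). I would then distinguish two cases. If $d$ lies on $P'$, the portion of $P'$ between $d'$ and $d$ is the required path from $d'$ to $\coup(d')$. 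If $d$ does not lie on $P'$, then appending the edge $ad$ to $P'$ gives a path from $d'$ to $d$; its only colours are $\mu(a')$ and $\mu(d')=\mu(d)$, so it is a $(\mu,\mu(a'),\mu(d'))$-bichromatic path, as needed.

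I do not anticipate a genuine obstacle, since the substance of the corollary is already contained in Lemma~\ref{lemma:pathd-adj}; the only things to verify are routine bookkeeping items flagged above, namely that $\coup(a')\ne\coup(d')$ and $d'\ne\coup(a')$ (both consequences of $\mu(A)\cap\mu(D)=\emptyset$), and the elementary facts that deleting an endpoint of a path, passing to a subpath, or extending a path by one new vertex all preserve being a path and introduce no new colours. If anything is mildly delicate, it is ensuring the two cases in part~(b) are exhaustive and that in each case the endpoints are exactly $d'$ and $\coup(d')$; this is why I would treat $d\in V(P')$ and $d\notin V(P')$ separately rather than attempt a single uniform construction.
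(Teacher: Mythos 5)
Your argument is correct and matches the paper's intended (one-line) justification: part~(a) reads off directly from the edge $a'd'$ on the path in Lemma~\ref{lemma:pathd-adj}(b), and part~(b) follows by truncating that path at $d'$ and, if needed, appending the edge $\coup(a')\coup(d')$, whose existence you verify from $\coup(a')\in A\subseteq N(t)$ and $\coup(d')\in D\subseteq B\subseteq N(t)$. Your explicit case split on whether $\coup(d')$ already lies on the truncated path is a detail the paper leaves implicit, but it is exactly the right bookkeeping.
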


\subsection{Bridging sets, bridging sequences, and re-coloring}

\begin{definition}\label{bridging-set}
{\em An ordered set $\{x_1, x_2, \ldots, x_k\}$ of vertices of $G^2$ is called a \emph{bridging set} if for each $i$, $1\le i\le k$, $x_i \in N(s) \setminus D'$ and there exists a vertex $q_i \in Q$ such that $\mu(q_i)=\mu(x_i)$ and $q_i$ is not adjacent in $G^2$ to at least one vertex in $D'\cup \{x_1,x_2, \ldots, x_{i-1}\}$. Denote $q_i = bp(x_i)$ and call it the \emph{bridging partner} of $x_i$. We also fix one vertex in $D'\cup \{x_1, x_2, \ldots, x_{i-1}\}$ not adjacent to $q_i$ in $G^2$, denote it by $bn(q_i)$, and call it the \emph{bridging non-neighbor} of $q_i$. (If there is more than one candidate, we fix one of them arbitrarily as the bridging non-neighbor.)}
\end{definition}

In the definition above we have $bp(x_i) \neq x_i$ for each $i$, for otherwise $bp(x_i)$ would be adjacent in $G^2$ to all vertices in $N(s)$ and so there is no candidate for the bridging non-neighbor of $bp(x_i)$, contradicting the definition of a bridging set.

In the following we take $L$ to be a fixed bridging set with maximum cardinality.  Note that $\mu(L)\subseteq \mu(Q)$ by the definition of a bridging set.

\begin{definition}
	{\em Given $z \in D' \cup L$, the \emph{bridging sequence} of $z$ is defined as the sequence of distinct vertices $s_1, s_2, \ldots, s_j$ such that $s_1=z$, $s_j\in D'$, and for $2\le i\le j $, $s_i$ is the bridging non-neighbor of the bridging partner of $s_{i-1}$.}	
\end{definition}

By Definition~\ref{bridging-set}, it is evident that the bridging sequence of every $z \in D' \cup L$ exists. In particular, for $d \in D'$, the bridging sequence of $d$ consists of only one vertex, namely $d$ itself.

\begin{lemma}
\label{lemma:bpbn}
Let $x\in L$, $q=bp(x)$ and $y=bn(q)$. If there exists $v\in N_2(q)$ such that $\mu(v)=\mu(y)$,  then $y\in L$ and $v=bp(y)$.
\end{lemma}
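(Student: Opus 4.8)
The plan is to argue by contradiction, mimicking the re-coloring strategy used in the proofs of Lemmas~\ref{lemma:QQ} and~\ref{lemma:pathD}. So suppose $q\in L$, $x=bp(q)$, $y=bn(x)$, and there exists $v\in N_2(x)$ with $\mu(v)=\mu(y)$, but the conclusion fails. First I would pin down where $v$ can live: by the definition of a bridging set, $q\in Q$ and $\mu(q)=\mu(x)$, and $q$ is not adjacent to $y\in D'\cup\{x_1,\dots\}$. I would apply Lemma~\ref{lemma:n2adjacentuts}(a) to get $N_2(q)\subseteq N[\{u,t,s\}]$, and then exploit the fact that $\mu(v)=\mu(y)\in\mu(D')\subseteq\mu(D)$. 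Since $\mu(y)\in\mu(D)$, by the definition of $D$ no vertex of $N(u)$ carries this color, and no two vertices of $N(t)$ carry the same color, so $v\notin N[u]$ and $v\notin N(t)\setminus\{t\}$-type positions; combined with Lemma~\ref{lemma:n2adjacentuts}(a)-(c) (applied with $q$ playing the role of an element of $A'\cup Q$, which is legitimate since $q\in Q$) this should force $v\in N(s)$. So $v$ is a vertex of $N(s)$ with $\mu(v)\in\mu(D)$, i.e.\ $v\in D'$, OR $v$ is not in $D'$ but still in $N(s)$ with color in $\mu(D)$ — but by Lemma~\ref{lemma:pathd-adj}(a) $\mu(D')=\mu(D)$ and $D'\subseteq N(s)$, and no two vertices of $N(s)$ share a color, so actually $v\in D'$ and $v=\coup$-related to the unique element of $D$ of that color. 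Since $y\in D'$ has color $\mu(y)$ and $v\in N(s)$ has the same color, uniqueness of colors in $N(s)$ forces $v=y$.

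With $v=y$, I then have $y\in N_2(x)$, i.e.\ $x$ and $y$ are adjacent in $G^2$ — but wait, that directly contradicts $y=bn(x)$ being a bridging non-neighbor of $x$... no: $bn$ is defined for $q_i$, not for $x_i$; here $y=bn(x)$ means $x=bp(y')$? Let me re-read: in Definition~\ref{bridging-set}, $bn(q_i)$ is the bridging non-neighbor of $q_i\in Q$, and it is a vertex not adjacent in $G^2$ to $q_i$. So $y=bn(x)$ only makes sense if $x\in Q$ — but $x\in N(s)\setminus D'$ by the bridging-set definition, not necessarily in $Q$. So I must be careful: the statement says $x=bp(q)$, so $x$ is the bridging partner, hence $x\in Q$ by Definition~\ref{bridging-set} (bridging partners lie in $Q$), and $q\in L\subseteq N(s)\setminus D'$. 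Thus $y=bn(x)$ is the fixed vertex of $D'\cup\{\dots\}$ not adjacent to $x$ in $G^2$. Good. Then $v=y$ gives $y\in N_2(x)$, i.e.\ $y$ adjacent to $x$ — contradicting $y=bn(x)$. Hence either $v\ne y$ is impossible unless the forced conclusions $y\in L$ and $v=bp(y)$ hold. I would need to track the ``or'' carefully: the real dichotomy is that $v\in N(s)$ with color $\mu(y)$; if $v\in D'$ then $v=y$ (contradiction as above), so $v\in N(s)\setminus D'$; and $v$ having color equal to that of $q$'s partner plus $v\in Q$-adjacency structure should identify $v$ as some $x_j$ in the bridging set with $v=bp$-partnered appropriately, forcing $y\in L$ and $v=bp(y)$.

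The step I expect to be the main obstacle is the bookkeeping that turns ``$v\in N(s)$, $\mu(v)=\mu(y)$, $v\notin D'$'' into ``$v=bp(y)$ and $y\in L$'': this requires recalling that $L$ is a bridging set of \emph{maximum} cardinality, so if $v$ were a legitimate new candidate to append to $L$ with bridging partner related to $y$, maximality would be violated unless $v$ is already in $L$; and then the definitions of $bp$ and $bn$ must be reconciled with the roles of $v$, $y$, $q$, $x$. In other words, the content is: $v$ together with an appropriate bridging partner would extend $L$, contradicting maximality, unless $v$ is already an element of $L$ whose bridging partner is forced to be $y$ — and here I would use that $q=bp(x$'s index$)$ was chosen with $bn(q)=y$, i.e.\ $q$ is not adjacent to $y$, to show that the required non-adjacency condition for $v$'s candidacy is met. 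Once the identification $v=bp(y)$ is in place, $y\in D'\cup L$ and $y\notin D'$ (since $y$ has a bridging partner, and elements of $D'$ do not need one — actually $y=bn(x)\in D'\cup\{x_1,\dots,x_{i-1}\}$, and if $y\in D'$ we already derived a contradiction), so $y\in L$, completing the proof. I would keep the argument parallel in style to Lemma~\ref{lemma:QQ}, reusing Lemmas~\ref{lemma:n2adjacentuts} and~\ref{lemma:pathd-adj} as the workhorses and invoking maximality of $L$ exactly once.
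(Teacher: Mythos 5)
Your plan has the right ingredients (Lemma~\ref{lemma:n2adjacentuts} plus color accounting) but goes off track in three places, and the invocation of maximality of $L$ is a red herring.

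First, you repeatedly treat $q$ as the element of $Q$ and apply Lemma~\ref{lemma:n2adjacentuts} to $N_2(q)$, but $q\in L\subseteq N(s)\setminus D'$, whereas $x=bp(q)\in Q$; the hypothesis puts $v$ in $N_2(x)$, so Lemma~\ref{lemma:n2adjacentuts}(b) must be applied with $x\in A'\cup Q$, not $q$. Second, you assume $\mu(v)=\mu(y)\in\mu(D)$. This is only true when $y\in D'$; since $y=bn(x)\in D'\cup\{x_1,\dots,x_{i-1}\}$ it can equally be in $L$, with $\mu(y)\in\mu(L)\subseteq\mu(Q')$. The correct observation is merely that $\mu(y)\in\mu(B)$ in either case (since $\mu(D')=\mu(D)\subseteq\mu(B)$ and $\mu(L)\subseteq\mu(Q)=\mu(Q')\subseteq\mu(B)$), which is exactly the hypothesis needed to apply Lemma~\ref{lemma:n2adjacentuts}(b) and conclude $v\in N(\{u,s\})$ --- \emph{not} $v\in N(s)$ as you derive. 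The error here matters: the lemma's conclusion $v=bp(y)$ puts $v$ in $Q\subseteq N(u)$, so by forcing $v$ into $N(s)$ you have steered the argument toward a location $v$ cannot occupy.

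Third, and most decisively, the mechanism that finishes the proof is not maximality of $L$. Once you have $v\in N(\{u,s\})$, the case $v\in N(s)$ is eliminated because all vertices of $N(s)$ receive distinct colors under $\mu$, forcing $v=y$; but $y=bn(x)$ is by definition not in $N_2(x)$, contradicting $v\in N_2(x)$. Hence $v\in N(u)$. This immediately gives $\mu(y)=\mu(v)\in\mu(N(u))$, so $\mu(y)\notin\mu(D)=\mu(D')$ and therefore $y\notin D'$, i.e.\ $y\in L$. Finally $bp(y)$ is the unique vertex of $N(u)$ with color $\mu(y)$, so $v=bp(y)$. No appeal to $|L|$ being maximal is needed, and your sketch of how such an appeal would identify $v$ as ``some $x_j$ in the bridging set'' is both unnecessary and, as phrased, incompatible with the conclusion: $v$ is a bridging \emph{partner} (in $Q$), not a member of $L$ (in $N(s)$). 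Your plan, as written, would not close the argument.
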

\begin{proof}
	We know that $q\in Q$ and $y\in D'\cup L$.
	Since $\mu(L)\subseteq \mu(Q)$ by the definition of a bridging set, we have $\mu(v)=\mu(y)\in \mu(D'\cup L)\subseteq \mu\left( B \right)$.
	Hence, by Lemma~\ref{lemma:n2adjacentuts}(b), $v$ must be in $N(\{s, u\})$. If $v\in N(s)$, then $v=y$, but this cannot happen as $y=bn(q)\notin N_2(q)$. Hence $v\in N(u)$. This implies that $\mu(v)\in \mu(Q\cup A'\cup C \cup \left\{ w,t,s \right\})$ and in particular $\mu(v)\notin \mu(D')$. Therefore, $\mu(y)\notin \mu(D')$, which implies $y\in L$. Since the only vertex in $N(u)$ with color $\mu(y)$ is $bp(y)$, we obtain $v=bp(y)$.  
\end{proof}

\begin{definition}\label{bridging-recoloring}
{\em 
Given a vertex $z\in D' \cup L$ with bridging sequence $s_1,s_2,\ldots, s_j$, define the \emph{bridging re-coloring} $\psi_z$ of $\mu$ with respect to $z$ by the following rules:
	\begin{itemize}
		\item[(a)] $\psi_z(x)=\mu(x)$ for each $x\in V(G) \setminus \left\{ bp(s_i):1\le i< j \right\}$;  
		\item[(b)] \label{constr-step2-bridging-recoloring} 
	  $\psi_z(bp(s_i))=\mu(s_{i+1})$ for $1\le i< j$. 
	\end{itemize}
}
\end{definition}

Observe that for $i \neq j$ we have $\mu(s_i)\neq \mu(s_j)$ as $s_i, s_j \in N(s)$. So each color is used at most once for recoloring in (b) above.

\begin{lemma}\label{psi-optimal}
	For any $z\in D' \cup L$, $\psi_z$ is an optimal coloring of $G^2$.
\end{lemma}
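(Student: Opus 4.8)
The plan is to verify that $\psi_z$ is a proper coloring of $G^2$ that uses at most $\chi(G^2)$ colors; since it uses colors only from the palette of $\mu$, the second part is immediate, and the real content is properness. First I would record the only changes made by $\psi_z$ relative to $\mu$: for each $i$ with $1 \le i < j$ the vertex $bp(s_i)$ is re-colored from $\mu(s_i)$ to $\mu(s_{i+1})$, and all other vertices keep their $\mu$-colors. Since the $s_i$ lie in $N(s)$, their colors are pairwise distinct, so the re-coloring is a ``partial shift'' along the bridging sequence: on the set $\{bp(s_1),\ldots,bp(s_{j-1})\}$ the color multiset changes, but no new color is introduced.

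Next I would check properness by considering, for each re-colored vertex $x = bp(s_i)$ with new color $c = \mu(s_{i+1})$, whether any vertex $v \in N_2(x)$ ends up with color $c$ under $\psi_z$. There are two cases for such a $v$. If $v$ is itself not re-colored, then $\psi_z(v) = \mu(v) = c = \mu(s_{i+1})$, so $v \in N_2(x) = N_2(bp(s_i))$ has $\mu(v) = \mu(s_{i+1})$; by the definition of the bridging sequence, $s_{i+1} = bn(bp(s_i))$, so $s_{i+1} \notin N_2(bp(s_i))$, hence $v \ne s_{i+1}$. But now Lemma~\ref{lemma:bpbn} (applied with $q = s_i \in L$ if $i \ge 1$, or directly, noting $bp(s_i) \in Q$ and $bn(bp(s_i)) = s_{i+1}$) forces $v = bp(s_{i+1})$; however $bp(s_{i+1})$ is re-colored by $\psi_z$ (its index is $i+1$, and $i+1 \le j-1$ exactly when $s_{i+1} \notin D'$, i.e. when $i+1 < j$; if $i+1 = j$ then $s_{i+1} \in D'$ and $bp(s_{i+1})$ does not exist), contradicting that $v$ is not re-colored — so this sub-case cannot occur, with the boundary case $i+1=j$ handled since then there is simply no such $v = bp(s_j)$. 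If instead $v$ is re-colored, say $v = bp(s_m)$ with $\psi_z(v) = \mu(s_{m+1}) = c = \mu(s_{i+1})$, then $s_{m+1} = s_{i+1}$ (colors distinct along the sequence forces equality of indices), so $m = i$ and $v = bp(s_i) = x$, which is impossible since $x \notin N_2(x)$.

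Finally I would also double-check the ``old'' constraints are not violated: for a re-colored $x = bp(s_i)$ we must make sure no $v \in N_2(x)$ with $\mu(v) = \mu(s_{i+1})$ sat happily before — but that is exactly the analysis above, and for vertices $v$ that keep their color and whose neighbors all keep theirs, properness is inherited from $\mu$; the only interactions to worry about are between a re-colored vertex and its $N_2$-neighborhood, which is what the case analysis covers. It is also worth noting that two distinct re-colored vertices $bp(s_i)$ and $bp(s_m)$ receive distinct new colors (again by distinctness of the $\mu(s_\cdot)$), so no clash is created among the re-colored vertices themselves. Assembling these observations shows $\psi_z$ is a proper coloring of $G^2$ with at most $\chi(G^2)$ colors, hence optimal.

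I expect the main obstacle to be the bookkeeping in the first sub-case above: correctly invoking Lemma~\ref{lemma:bpbn} to pin down $v = bp(s_{i+1})$ and then arguing that this vertex is itself re-colored (so cannot be the unchanged witness $v$), while carefully treating the boundary index $i+1 = j$ where $s_j \in D'$ has no bridging partner. Everything else is a routine ``color shift along a path with distinct colors'' verification.
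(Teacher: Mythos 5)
Your proof is correct and takes essentially the same approach as the paper's: reduce to checking that a re-colored vertex $bp(s_i)$ cannot have a color conflict in $N_2(bp(s_i))$, invoke Lemma~\ref{lemma:bpbn} to pin down the only possible conflicting vertex as $bp(s_{i+1})$, and then observe that this vertex is itself re-colored to $\mu(s_{i+2}) \ne \mu(s_{i+1})$ (with the boundary case $i+1=j$ ruled out because Lemma~\ref{lemma:bpbn} would force $s_{i+1}\in L$, contradicting $s_j\in D'$ and $D'\cap L=\emptyset$). The paper phrases this as a proof by contradiction and folds your two sub-cases into the single observation that $bp(s_i)$ is the only re-colored vertex with $\psi_z$-color $\mu(s_{i+1})$, but the underlying argument is the same.
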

\begin{proof}
Since $\psi_z$ only uses colors of $\mu$, it suffices to prove that it is a proper coloring of $G^2$.
	Let $s_1, s_2, \ldots, s_j$ be the bridging sequence of $z$.
	Suppose to the contrary that $\psi_z$ is not a proper coloring of $G^2$.
	Then by the definition of $\psi_z$ there exists $1 \le i \le j-1$ such that $\psi_z(bp(s_i)) \in \psi_z(N_2(bp(s_i)))$.
	Denote $x=bp(s_i)$. Then there exists $v\in N_2(x)$ such that $\psi_z(v)=\psi_z(x)=\mu(s_{i+1})$. 
	Since $x$ is the only vertex that has the color $\mu(s_{i+1})$ under $\psi_z$ and a different color under $\mu$, we have $\mu(v)=\mu(s_{i+1})$.
	Since $s_{i+1}=bn(x)$, by Lemma~\ref{lemma:bpbn} we have $s_{i+1}\in L$ and $v=bp(s_{i+1})$. Thus $j \ne i+1$.
	However, $\psi_z(bp(s_{i+1}))=\mu(s_{i+2})\neq \mu(s_{i+1})$ by the definition of $\psi_z$.
	Therefore, $\psi_z(v)\neq \mu(s_{i+1})$, which is a contradiction.
\end{proof}

\begin{lemma}\label{rgnotrecolored}
	Let $a'\in A'$, $z \in L$, $r=\mu(a')$ and $g=\mu(z)$. Let $c\in\left\{ r,g \right\}$. Then for any $x \in V(G) \setminus \{bp(z)\}$,
	$\psi_z(x)=c$ if and only if $\mu(x)=c$, whilst $\mu(bp(z)) = \mu(z) = g$ but $\psi_z(bp(z)) \notin \{r, g\}$.  
\end{lemma}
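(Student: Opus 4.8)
The plan is to read off the claim almost directly from the definition of the bridging re-coloring $\psi_z$ (Definition~\ref{bridging-recoloring}), using the fact that the only vertices whose colors are changed are among $bp(s_1), \ldots, bp(s_{j-1})$, where $s_1, s_2, \ldots, s_j$ is the bridging sequence of $z$ (so $s_1 = z$ and $s_j \in D'$). First I would observe that, since $z \in L$, by Definition~\ref{bridging-set} we have $z \in N(s) \setminus D'$, so $\mu(z) \in \mu(B)$ and $g = \mu(z) \notin \mu(D')$; similarly, for $1 \le i < j$, $s_i$ is a bridging non-neighbor, hence lies in $D' \cup \{x_1, \ldots, x_{i-1}\}$, and $bp(s_i) = q_i \in Q$ with $\mu(q_i) = \mu(s_i)$. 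Under $\psi_z$, rule (b) recolors $bp(s_i)$ with $\mu(s_{i+1})$ for $1 \le i < j$, and rule (a) leaves every other vertex fixed.

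For the ``whilst'' part: take $x = bp(z) = bp(s_1) = q_1$. Under $\mu$ its color is $\mu(s_1) = \mu(z) = g \in \{r, g\}$. Under $\psi_z$ its color becomes $\mu(s_2)$. Since $s_1, s_2, \ldots, s_j$ are distinct and all lie in $N(s)$, no two of them share a color, so $\mu(s_2) \ne g$; and $\mu(s_2) \ne r$ because $r = \mu(a') \in \mu(A') = \mu(A)$, whereas $s_2 \in D' \cup \{x_1, \ldots\} \subseteq N(s)$ has color in $\mu(D) \cup \mu(B)$, which is disjoint from $\mu(A)$ (the sets $A, A', C, C', D, F, Q, Q'$ and $\{u,w,t\}$ are pairwise disjoint, and colors in $\mu(A')$ are used by exactly one vertex of $N(u)$, while $s_2 \in N(s)$ is not in $N(u)$ when $s_2 \in D'$, and $\mu(D) \cap \mu(N(u)) = \emptyset$; when $s_2$ is some earlier $x_k \in L$ it has a color in $\mu(B)$, again disjoint from $\mu(A)$). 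Hence $\psi_z(bp(z)) \notin \{r, g\}$ while $\mu(bp(z)) \in \{r, g\}$.

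For the ``if and only if'' part, fix $x \in V(G) \setminus \{bp(z)\}$. If $x \notin \{bp(s_i) : 1 \le i < j\}$, then $\psi_z(x) = \mu(x)$ by rule (a) and there is nothing to prove. So suppose $x = bp(s_i)$ for some $2 \le i < j$ (the case $i = 1$ is excluded since $x \ne bp(z) = bp(s_1)$ and the $bp(s_i)$ are distinct, being distinct vertices of $Q$). Then $\mu(x) = \mu(s_i)$ and $\psi_z(x) = \mu(s_{i+1})$. Both $\mu(s_i)$ and $\mu(s_{i+1})$ are colors of vertices in $N(s) \setminus D'$ or $D'$, i.e. colors in $\mu(B) \cup \mu(D) \subseteq \mu(N[\{w,t\}] \cup \cdots)$ — in any case these colors are distinct from $r \in \mu(A)$ by the pairwise-disjointness recorded after the definition of $D'$, and they are also distinct from $g = \mu(z) = \mu(s_1)$ since $s_1, s_i, s_{i+1}$ are distinct vertices of $N(s)$ and so pairwise differently colored. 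Therefore $\mu(x) \notin \{r, g\}$ and $\psi_z(x) \notin \{r, g\}$, so the biconditional holds vacuously on both sides.

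The only mildly delicate point — and the step I expect to need the most care — is verifying that the colors $\mu(s_i)$ appearing along the bridging sequence never coincide with $r = \mu(a')$; this is where one must invoke that $r \in \mu(A) = \mu(A')$ and that each $s_i$ lies in $N(s)$ with color in $\mu(B) \cup \mu(D)$, which is disjoint from $\mu(A')$ because the sets $A', A, C, C', D, F, Q, Q', \{u,w,t\}$ are pairwise disjoint and each color of $\mu(A')$ is realized by a unique vertex of $N(u)$ lying outside all of $B$, $D$, $N(s)$-side sets. Everything else is bookkeeping with Definitions~\ref{bridging-set} and~\ref{bridging-recoloring}.
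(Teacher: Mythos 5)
Your proof is correct and follows essentially the same route as the paper's, which is a one-line argument: the paper simply observes that the lemma follows from the definition of $\psi_z$ together with the fact that $r, g \notin \mu(\{s_2, s_3, \ldots, s_j\})$, which is precisely what you establish (via $\mu(A')$ being disjoint from $\mu(D) \cup \mu(L)$ and the $s_i$ being distinct vertices of $N(s)$) and then unwind case by case.
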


\begin{proof}
	This follows from the definition of $\psi_z$ and the fact that 
	$r, g \notin \mu(\{s_2, s_3, \ldots, s_j\})$ for the bridging sequence $s_1, s_2, \ldots, s_j$ of $z$.
\end{proof}

\begin{lemma}\label{lemma:qpath}
	For any $a'\in A'$ and $q\in L$, there exists a $(\mu, \mu(a'),\mu(q))$-bichromatic path from $a'$ to $\coup(a')$ in $G^2$ which contains the edge $a' q$.
\end{lemma}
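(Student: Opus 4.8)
The plan is to reduce the claim to Lemma~\ref{lemma:pathd-adj}(b) applied to the optimal colouring $\psi_q$ (optimal by Lemma~\ref{psi-optimal}) instead of $\mu$. Write $r=\mu(a')$ and $g=\mu(q)$, and let $s_1=q,s_2,\dots,s_j$ be the bridging sequence of $q$, so $s_j\in D'$ and, since $q\in L$ forces $q\notin D'$, we have $j\ge 2$. First I would record the colour-disjointness facts that drive everything: $N[u]$, $N[t]$ and $N[s]$ are cliques of $G^2$ (they are closed neighbourhoods of single vertices of $G$), so every proper colouring of $G^2$ is injective on each of them; hence $\mu(A)\cap\mu(B)=\emptyset$ (as $A\cup B\subseteq N(t)$), $\mu(Q')\cap\mu(D)=\emptyset$ (as $Q'\cup D\subseteq B\subseteq N(t)$), $q$ is the only vertex of $N(s)$ coloured $g$, and $bp(q)$ is the only vertex of $N(u)$ coloured $g$. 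In particular $r\ne g$, and by Lemma~\ref{lemma:QQ} we have $g\in\mu(L)\subseteq\mu(Q)=\mu(Q')\subseteq\mu(B)$, so some $y_0\in Q'\subseteq B$ satisfies $\mu(y_0)=g$.

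Next I would argue that $(\psi_q,p)$ is as legitimate a choice for the pivot pair as $(\mu,p)$: $\psi_q$ is optimal, $p$ is still the only vertex of its colour under $\psi_q$ (each recoloured vertex is given a colour already used by another vertex under $\mu$), and $N(p)$ — a purely graph-theoretic object, equal to $\{u,w\}$ by Lemma~\ref{unprocessed} — is unchanged, so the quantity maximised in the choice of the pivot pair takes the same value on $(\psi_q,p)$ as on $(\mu,p)$ and is therefore still maximal. Consequently Lemmas~\ref{pivot}--\ref{lemma:pathd-adj} all hold with $\psi_q$ in place of $\mu$, for the corresponding sets $A_{\psi_q},A'_{\psi_q},Q_{\psi_q},D_{\psi_q},D'_{\psi_q},\dots$ defined using $\psi_q$. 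The key structural point is that $\psi_q$ differs from $\mu$ only on $\{bp(s_1),\dots,bp(s_{j-1})\}\subseteq Q$, where $Q$ is disjoint from $N(t)$ and from $C$, and each $bp(s_i)$ receives the colour $\mu(s_{i+1})\in\mu(B)$, which is never in $\mu(A)$; a short check then gives $A_{\psi_q}=A$, $A'_{\psi_q}=A'$, $Q_{\psi_q}=Q$, and $\coup_{\psi_q}=\coup$ on $A'$. Most importantly, passing from $\mu$ to $\psi_q$ removes the colour $g$ from $N(u)$ (it was carried there only by $bp(q)=bp(s_1)$) while $y_0\in B$ still carries colour $g$; hence $y_0\in D_{\psi_q}$ and therefore $q\in D'_{\psi_q}$.

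Now I would apply the $\psi_q$-version of Lemma~\ref{lemma:pathd-adj}(b) to $a'\in A'_{\psi_q}$ and $q\in D'_{\psi_q}$: it produces a $(\psi_q,\psi_q(a'),\psi_q(q))$-bichromatic path from $a'$ to $\coup_{\psi_q}(a')$ in $G^2$ in which $q$ is adjacent to $a'$, that is, a $(\psi_q,r,g)$-bichromatic path from $a'$ to $\coup(a')$ containing the edge $a'q$. Finally, Lemma~\ref{rgnotrecolored} with $z=q$ tells us that the set of vertices coloured $r$ or $g$ under $\psi_q$ is exactly the set of vertices coloured $r$ or $g$ under $\mu$ with $bp(q)$ deleted; in particular every vertex of the path just constructed has the same colour under $\mu$ as under $\psi_q$, so that path is also a $(\mu,r,g)$-bichromatic path from $a'$ to $\coup(a')$ containing $a'q$, which is the assertion of the lemma.

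I expect the main obstacle to be the bookkeeping in the middle step, namely making it airtight that $\psi_q$ may replace $\mu$ throughout Lemmas~\ref{pivot}--\ref{lemma:pathd-adj}: one must verify that the recolouring takes place entirely inside $Q$ (so that $N(t)$, $C$, $F$ and hence the sets $A,A',B,C$ and the mating $\coup$ are untouched), that the standing hypotheses $D_{\psi_q}\ne\emptyset$ and $A'_{\psi_q}\ne\emptyset$ survive, and above all that the target colour $g$ genuinely lands in the ``$D$-role'' for $\psi_q$, yielding $q\in D'_{\psi_q}$. Everything else is a routine transcription of the earlier bichromatic-path arguments.
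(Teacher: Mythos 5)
Your proof is correct, but it takes a genuinely different route from the paper's. The paper proves Lemma~\ref{lemma:qpath} directly by a Kempe-chain argument: it passes to the re-coloring $\psi_q$, considers the subgraph $H$ of $G^2$ induced on colors $r,g$ under $\psi_q$, and shows by an explicit color-swap-and-recolor-$p$ contradiction that $a=\coup(a')$ lies in the component of $a'$ and that $q$ must be the $g$-colored neighbor of $a'$ on the resulting path; Lemma~\ref{rgnotrecolored} then lifts this back to $\mu$. You instead argue that $(\psi_q,p)$ is itself a legitimate pivot pair (optimality by Lemma~\ref{psi-optimal}, uniqueness of $p$'s color is preserved because each recolored vertex receives a color already present elsewhere, and the level-maximization criterion is coloring-independent), that the recoloring takes place entirely inside $Q$, that consequently $A,A',Q,\coup|_{A'}$ are unchanged, and that the color $g$ migrates from $\mu(N(u))$ into $\psi_q(D_{\psi_q})$ so that $q\in D'_{\psi_q}$; then you invoke the $\psi_q$-instance of Lemma~\ref{lemma:pathd-adj}(b) and convert back with Lemma~\ref{rgnotrecolored}. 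I checked the bookkeeping you flag as the main risk and it does go through: $Q\cap N(t)=\emptyset$, $Q\cap N(s)=\emptyset$, $\mu(s_{i+1})\in\mu(B)$ and hence never in $\mu(A)$, so $A'_{\psi_q}=A'$ and $\coup_{\psi_q}=\coup$ on $A'$; the $g$-colored vertex $bp(q)$ is the unique source of $g$ in $N(u)$ and after recoloring no other $bp(s_i)$ takes $g$; and the standing hypotheses $A'_{\psi_q}\neq\emptyset$, $D_{\psi_q}\neq\emptyset$ survive (the latter via $y_0\in Q'$). What your approach buys is a conceptual explanation for why $L$ behaves like $D'$ throughout the rest of the proof (it literally becomes part of $D'$ after an appropriate bridging re-coloring), at the price of having to verify that the entire $\mu$-dependent scaffolding of Lemmas~\ref{lemma:duniquecolor}, \ref{lemma:pathD}, \ref{lemma:n2adjacentuts} and~\ref{lemma:pathd-adj} is stable under the substitution $\mu\mapsto\psi_q$; the paper's direct argument avoids that meta-level check but re-does the Kempe-chain computation from scratch.
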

\begin{proof}
Denote $\mu(a')=r$, $\mu(q)=g$ and $a=\coup(a')$. In view of Lemma~\ref{rgnotrecolored}, it suffices to prove that there exists a $(\psi_q, r,g)$-bichromatic path from $a'$ to $a$ in $G^2$ which uses the edge $a' q$. Consider the subgraph $H$ of $G^2$ induced by the set of vertices with colors $r$ and $g$ under $\psi_q$. Denote by $H'$ the connected component of $H$ containing $a'$.
	
We first prove that $a \in V(H')$. Suppose otherwise. Define a coloring $\phi$ of $G^2$ as follows: for each $v \in V(H')$, if $\psi_q(v)=r$ then set $\phi(v) = g$, and if $\psi_q(v)=g$ then set $\phi(v) = r$; set $\phi(p) = r$; and set $\phi(x) = \psi_q(x)$ for each $x \in V(G) \setminus (V(H') \cup \{p\})$. 
We claim that $\phi$ is a proper coloring of $G^2$. To prove this it suffices to show $r\notin \phi(N_2(p))$ because exchanging the two colors within $V(H')$ does not produce an improper coloring. Suppose to the contrary that there exists a vertex $v\in N_2(p)$ such that $\phi(v)=r$. If $v\in V(H')$, then $\psi_q(v)=g$ and so $v\neq bp(q)$ by the definition of $\psi_q$. 
Also $\mu(v)=g$ by Lemma~\ref{rgnotrecolored}. 
The only vertices in $N_2[p]$ with color $g$ under $\mu$ are $bp(q)$ and one vertex in $Q'$, say, $q'$. 
Since $v\neq bp(q)$, we have $v=q'$. Since $a\in N_2(q')$, we get $a\in V(H')$, which is a contradiction.
If $v\notin V(H')$, then $\psi_q(v)=r$, and by Lemma~\ref{rgnotrecolored}, $\mu(v)=r$.
However, the only vertex in $N_2[p]$ with color $r$ under $\mu$ is $a'$ (as $N_2[p]\subseteq N_2[\left\{ a,a' \right\}]$, $\mu(a)=\mu(a')=r$ and $a'\notin N_2[p]$). Then $v=a'$ and hence $\psi_q(v)=\psi_q(a')=g\neq r$, which is a contradiction. Thus $\phi$ is a proper coloring of $G^2$. Recall that $p$ is the only vertex in $G$ with color $\mu(p)$ under $\mu$. By the definition of $\psi_q$,  $p$ remains to be the only vertex with color $\mu(p)$ under $\psi_q$. Hence $\phi$ uses one less color than $\psi_q$ as it does not use the color $\psi_q(p)=\mu(p)$. This is a contradiction as by Lemma~\ref{psi-optimal} $\psi_q$ is an optimal coloring of $G^2$.  
Therefore, $a \in V(H')$. 

Since $a \in V(H')$, there is a $(\psi_q, r, g)$-bichromatic path from $a'$ to $a$ in $G^2$. We show that in this path $a'$ has to be adjacent to $q$. 
Suppose otherwise. Say, $v\neq q$ is adjacent to $a'$ in this path. 
Then $\psi_q(v)=g$, and by Lemma~\ref{rgnotrecolored}, $\mu(v)=g$.
By Lemma~\ref{lemma:n2adjacentuts}(b), $v\in N(\{u,s\})$. Since $v\neq q$, we have $v\not \in N(s)$.
Hence, $v\in N(u)$, which implies $v=bp(q)$. 
Since $\psi_q(bp(q))\neq g$ by the definition of $\psi_q$, it follows that $\psi_q(v)\neq g$, but this is a contradiction. This completes the proof. 
\end{proof}

\begin{corollary}\label{qadja'}
Each $a'\in A'$ is adjacent to each $q\in L$ in $G^2$.
\end{corollary}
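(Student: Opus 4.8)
The plan is to read this off directly from Lemma~\ref{lemma:qpath}, which has already done all the work. Fix an arbitrary $a' \in A'$ and an arbitrary $q \in L$. By Lemma~\ref{lemma:qpath} there is a $(\mu,\mu(a'),\mu(q))$-bichromatic path from $a'$ to $\coup(a')$ in $G^2$ that contains the edge $a'q$. In particular $a'q$ is an edge of $G^2$, which is exactly the assertion. So the only step is to invoke the previous lemma and observe that a path "containing the edge $a'q$" is, by the definition of a path in a graph, a path in which $a'$ and $q$ are adjacent vertices of $G^2$.

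I do not expect any obstacle here: the genuine difficulty — constructing a maximum bridging set $L$, defining the bridging sequence and the bridging re-coloring $\psi_q$, showing via Lemma~\ref{psi-optimal} that $\psi_q$ is optimal, and then running the Kempe-chain / bichromatic-component argument in Lemma~\ref{lemma:qpath} to force both $\coup(a') \in V(H')$ and the incidence of $q$ with $a'$ on the path — is all upstream. The one thing I would be mildly careful about is that the statement is used in the regime where it is non-vacuous, namely where $A' \neq \emptyset$ (guaranteed by the standing assumption that $D \neq \emptyset$ and $A \neq \emptyset$) and $L$ is the fixed bridging set of maximum cardinality; if $L = \emptyset$ the corollary is trivially true.

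Thus the write-up will be a single sentence: "This is immediate from Lemma~\ref{lemma:qpath}, since the bichromatic path produced there contains the edge $a'q$." Everything nontrivial that this corollary rests on — and, in turn, that later arguments building the path-branch-set clique minor will rest on — lives in the chain of lemmas culminating in Lemma~\ref{lemma:qpath} together with Corollary~\ref{corollary:dadja'}, so there is nothing further to grind through at this point.
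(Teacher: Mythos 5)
Your proof is correct and matches the paper exactly: the paper states Corollary~\ref{qadja'} with no proof precisely because, as you observe, it is an immediate consequence of Lemma~\ref{lemma:qpath}, since a bichromatic path containing the edge $a'q$ witnesses adjacency of $a'$ and $q$ in $G^2$.
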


We now extend the definition of mate to the set $L$.
For each $q\in L$, define $\coup(q)$ to be the vertex in $Q'$ with the same color as $q$ under the coloring $\mu$. 
We now have the following corollary of Lemma~\ref{lemma:qpath}.

\begin{corollary}\label{cor:pathq}
	For any $a'\in A'$ and $q\in L$, there is a $(\mu,\mu(a'),\mu(q))$-bichromatic path from $q$ to $\coup(q)$.
\end{corollary}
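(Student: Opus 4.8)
The plan is to obtain Corollary~\ref{cor:pathq} from Lemma~\ref{lemma:qpath} in the same way Corollary~\ref{corollary:dadja'}(b) was obtained from Lemma~\ref{lemma:pathd-adj}(b): take the bichromatic path guaranteed between $a'$ and $\coup(a')$, observe that $q$ lies on it, and then argue that the connected component of this bichromatic subgraph also contains $\coup(q)$ because $\coup(q)$ is adjacent in $G^2$ to $\coup(a')$.

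Concretely, I would fix $a'\in A'$ and $q\in L$, set $r=\mu(a')$ and $g=\mu(q)$, and let $H$ be the subgraph of $G^2$ induced by the vertices colored $r$ or $g$ under $\mu$. By Lemma~\ref{lemma:qpath} there is a $(\mu,r,g)$-bichromatic path from $a'$ to $\coup(a')$ that uses the edge $a'q$; in particular $q$ and $\coup(a')$ lie in one and the same connected component $H'$ of $H$.

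The only point needing verification is that $\coup(q)$ is adjacent to $\coup(a')$ in $G^2$. By construction $\coup(q)\in Q'\subseteq B$, so by Lemma~\ref{unprocessed} the only $G$-neighbours of $\coup(q)$ are $w$ and $t$; in particular $\coup(q)$ is adjacent to $t$ in $G$. On the other hand $\coup(a')\in A\subseteq N(t)$ is also adjacent to $t$ in $G$, and $\coup(a')\neq\coup(q)$ since $A\cap B=\emptyset$ (as $B=D\cup Q'$ and both $A\cap D$ and $A\cap Q'$ are empty). Hence $\coup(q)$ and $\coup(a')$ share the $G$-neighbour $t$ and are therefore adjacent in $G^2$. (By the same token $q\notin B$, since $q\in N(s)$ while $s\notin\{w,t\}$ by the level bookkeeping, so in fact $q\neq\coup(q)$, although this is not needed.)

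To finish, since $\mu(\coup(q))=\mu(q)=g$ the vertex $\coup(q)$ belongs to $V(H)$, and being $G^2$-adjacent to $\coup(a')\in V(H')$ it must lie in $V(H')$ (being in a different component would contradict $H$ being induced). Thus $q$ and $\coup(q)$ lie in the same connected component $H'$ of $H$, and any path between them in $H'$ is the desired $(\mu,\mu(a'),\mu(q))$-bichromatic path from $q$ to $\coup(q)$ in $G^2$. I do not anticipate a real obstacle: everything beyond invoking Lemma~\ref{lemma:qpath} reduces to the adjacency $\coup(q)\sim\coup(a')$, which is immediate from the definitions of $A$, $B$ and $Q'$ together with Lemma~\ref{unprocessed}; the corollary is essentially Lemma~\ref{lemma:qpath} read from the $\coup(q)$ end of the Kempe chain.
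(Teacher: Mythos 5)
Your proof is correct and follows the same idea as the paper's very short proof, which simply notes that $\coup(a')$ is adjacent to $\coup(q)$ in $G^2$ and lets this combine with Lemma~\ref{lemma:qpath}. You have spelled out the component argument and the adjacency check ($\coup(q)\in Q'\subseteq B$ is a $G$-neighbour of $t$, and $\coup(a')\in A\subseteq N(t)$, with $A\cap B=\emptyset$) in full detail, but this is exactly the reasoning the paper leaves implicit.
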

\begin{proof}
	This follows because $\coup(a')$ is adjacent to $\coup(q)$ in $G^2$.
\end{proof}

Define 
$$
bp(L) = \{bp(q): q\in L\}.
$$
Then $bp(L)\subseteq Q$, $\mu(bp(L))=\mu(L)$, and $\mu(L \cup (Q \setminus bp(L)))=\mu(Q)=\mu(Q')$.

\begin{lemma}\label{lemma:q3conn-q1d'}
For any $q \in Q \setminus bp(L)$, $D' \cup L \subseteq N_2[q]$.  
\end{lemma}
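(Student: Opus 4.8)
The plan is to argue by contradiction, exploiting the maximality of the bridging set $L$. Suppose there were a vertex $q\in Q\setminus bp(L)$ and a vertex $z\in D'\cup L$ with $q\notin\n{z}$. Write $c=\mu(q)$. Since $q\in Q\subseteq N(u)$ and $\mu$ is injective on $N(u)$, $q$ is the only vertex of $N(u)$ coloured $c$; in particular $c\in\mu(Q)\subseteq\mu(N(u))$, so $c\notin\mu(D)=\mu(D')$ because $\mu(D)\cap\mu(N(u))=\emptyset$. Also $c\notin\mu(L)$: if $c=\mu(x_i)$ for some $x_i\in L$ then $bp(x_i)\in Q$ and $q\in Q$ would both be coloured $c$, forcing $q=bp(x_i)\in bp(L)$, contrary to the choice of $q$. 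Finally, by Lemma~\ref{lemma:QQ} we have $c\in\mu(Q)=\mu(Q')$, so there is a unique $q''\in Q'$ with $\mu(q'')=c$, and by Lemma~\ref{unprocessed} its only $G$-neighbours are $w$ and $t$.

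The crux is to produce a vertex $x^{*}\in N(s)$ with $\mu(x^{*})=c$. Once this is done the proof finishes quickly: $x^{*}\notin D'$ since $c\notin\mu(D')$, and $x^{*}\notin L$ since $c\notin\mu(L)$, while $x^{*}\neq q$ because $x^{*}$ and $z$ both lie in $N(s)$ and are therefore adjacent in $G^{2}$ (or equal), whereas $q\notin\n{z}$. Writing $L=\{x_1,\dots,x_k\}$, the ordered set $\{x_1,\dots,x_k,x^{*}\}$ then satisfies Definition~\ref{bridging-set}: indeed $x^{*}\in N(s)\setminus D'$, and the vertex $q\in Q$ has $\mu(q)=\mu(x^{*})$ and is not adjacent in $G^{2}$ to the vertex $z$ of $D'\cup\{x_1,\dots,x_k\}$. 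Hence $L\cup\{x^{*}\}$ is a bridging set of size $|L|+1$, contradicting the maximality of $L$; this contradiction yields $q\in\n{z}$ for every $z\in D'\cup L$, i.e.\ $D'\cup L\subseteq\n{q}$.

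To produce $x^{*}$ I would use a bichromatic-path argument in the spirit of Lemmas~\ref{lemma:pathD}, \ref{lemma:pathd-adj} and \ref{lemma:qpath}. Fix $a'\in A'$ (nonempty by the standing assumption), put $r=\mu(a')$, and let $H'$ be the connected component containing $a'$ of the subgraph of $G^{2}$ induced by the vertices coloured $r$ or $c$. Using $\n{p}=N[u]\cup Q'\cup D$ together with the fact that $A$ and $B$ are disjoint subsets of the clique $N(t)$ of $G^{2}$ (so $\mu(A)\cap\mu(B)=\emptyset$), the only vertices of $\n{p}$ coloured $r$ or $c$ under $\mu$ are $a'$, $q$ and $q''$; hence if $H'$ avoided both $q$ and $q''$, swapping the two colours inside $H'$ and then recolouring $p$ with $r$ would give a proper colouring of $G^{2}$ with fewer colours, contradicting the optimality of $\mu$. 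So $H'$ contains $q$ or $q''$. Take a path in $H'$ from $a'$ to such a vertex; its second vertex $v_1$ is adjacent in $G^{2}$ to $a'$ and is coloured $c$, so $v_1\neq q''$ (whose only $G$-neighbours are $w,t$). If $v_1\in Q$ then $v_1=q$ (the unique $c$-coloured vertex of $N(u)$) and the path starts with the edge $a'q$; otherwise $v_1\in N_2(A'\cup Q)$ with $\mu(v_1)=c\in\mu(B)$, so Lemma~\ref{lemma:n2adjacentuts}(b) gives $v_1\in N(\{u,s\})$, and since $v_1\notin N(u)$ (else again $v_1=q\in Q$) we get $v_1\in N(s)$ and may take $x^{*}=v_1$. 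In the remaining degenerate case (path beginning $a'q$) I would invoke the bridging re-colouring $\psi_{z}$ (optimal by Lemma~\ref{psi-optimal}) together with a $(\psi_{z},r,c)$-bichromatic swap, as in the proof of Lemma~\ref{lemma:qpath}, using the hypothesis $q\notin\n{z}$ and the structural restrictions of Lemma~\ref{lemma:n2adjacentuts}, to force a $c$-coloured vertex into $N(s)$ while keeping $p$ the unique vertex of its colour.

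I expect this last step — establishing that some vertex of $N(s)$ carries the colour $\mu(q)$, in particular handling the degenerate case in which the only $(r,c)$-connection from $a'$ is the edge $a'q$ — to be the main obstacle, since it is precisely here that the bridging-re-colouring machinery ($\psi_{z}$, bridging sequences, and Lemmas~\ref{lemma:bpbn}--\ref{lemma:qpath}) must be deployed with care; once $x^{*}$ is in hand, the contradiction with the maximality of $L$ is immediate.
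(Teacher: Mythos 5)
The outer skeleton you propose is exactly the paper's: suppose some $q\in Q\setminus bp(L)$ and $z\in D'\cup L$ with $z\notin N_2[q]$, and obtain a contradiction from the maximality of $L$. You also correctly observe that if there is a vertex $x^*\in N(s)$ with $\mu(x^*)=\mu(q)$ then $x^*\notin D'\cup L$ and $x^*\neq q$, so $L\cup\{x^*\}$ is a larger bridging set, a contradiction. That part is fine (and the paper does it in one line, without any bichromatic-path machinery: if $\mu(q)\in\mu(N(s))$, just take the $\mu(q)$-coloured vertex of $N(s)$ as $x^*$).

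The genuine gap is the case $\mu(q)\notin\mu(N(s))$. Your plan throughout is ``produce $x^*\in N(s)$ with $\mu(x^*)=\mu(q)$,'' but in this case no such vertex exists, so the plan cannot succeed, and your own analysis confirms this: in this subcase the only vertex of colour $\mu(q)$ adjacent to $a'$ that your component argument can hand you is $q$ itself (by Lemma~\ref{lemma:n2adjacentuts}(b), any other candidate would have to lie in $N(s)$, which is excluded). Your remark that you would then ``invoke the bridging re-colouring $\psi_z$ \ldots to force a $c$-coloured vertex into $N(s)$'' is a misconception: bridging sets (Definition~\ref{bridging-set}) are defined with respect to the fixed colouring $\mu$, so recolouring cannot create the needed $x^*$. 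What is required here is a different contradiction, not an $x^*$. The paper's proof supplies it: starting from $\psi_z$ (optimal by Lemma~\ref{psi-optimal}), perform the cyclic recolouring $q\mapsto\mu(z)$, $a'\mapsto\mu(q)$, $p\mapsto\mu(a')$; the hypothesis $z\notin N_2[q]$ makes the first step safe, $\mu(q)\notin\mu(N(s))$ together with Lemma~\ref{lemma:n2adjacentuts}(b) makes the second safe, and uniqueness of the colour $\mu(a')$ in $N_2[p]$ makes the third safe. The result is a proper colouring using fewer colours than $\psi_z$, contradicting optimality. That direct argument, rather than any attempt to locate an $x^*$, is what closes the gap you flagged.
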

\begin{proof} 
Suppose otherwise. Say, $q \in Q \setminus bp(L)$ and $z\in (D' \cup L) \setminus N_2[q]$.	
	
Consider first the case when $\mu(q)\in \mu(N(s))$, say, $\mu(q)=\mu(x)$ for some $x \in N(s)$. Then $x \neq q$ for otherwise $z\in N_2[q]$. Also, $x\notin L$ for otherwise, $bp(x)$ and $q$ are adjacent in $G^2$ but have the same color under $\mu$. We also know that $x\notin D'$ as $\mu(D')\cap \mu(N[u])=\emptyset$.  Hence $L\cup \{x\}$ is a larger bridging set than $L$ by taking $bp(x)=q$ and $bn(q)=z$. This contradicts the assumption that $L$ is a bridging set with maximum cardinality.
	
	Henceforth we assume that $\mu(q)\notin \mu(N(s))$. Since $A' \ne \emptyset$ by our assumption, we can take a vertex $a' \in A'$.
	Define a coloring $\phi$ of $G^2$ as follows: set $\phi(q) = \psi_z(z)=\mu(z)$, $\phi(a') = \psi_z(q)=\mu(q)$ and $\phi(p) = \psi_z(a')=\mu(a')$, and color all vertices in $V(G) \setminus \{q, a', p\}$ in the same way as in $\psi_z$. Clearly, $\phi$ uses less colors than $\psi_z$ as it does not use the color $\psi_z(p)$. Since by Lemma~\ref{psi-optimal}, $\psi_z$ is an optimal coloring of $G^2$, $\phi$ cannot be a proper coloring of $G^2$. Hence one of the following three cases must happen. In each case, we will obtain a contradiction and thus complete the proof. Note that, by the definition of $Q \setminus bp(L), A', L$ and $D'$, the colors $\mu(z)$, $\mu(q)$ and $\mu(a')$ used by $\phi$ are pairwise distinct.
	
		\smallskip
		\textit{Case 1:} There exists $v\in N_2(q)$ such that $\phi(v)=\phi(q)=\mu(z)$.  		
		
		In this case $q$ is the only vertex with color $\mu(z)$ under $\phi$ that has a different color under $\psi_z$.
		Since $v\neq q$, $\psi_z(v)=\phi(v)=\mu(z)$.
		Since $\mu(z)$ is not a color that was recolored to some vertex during the construction of $\psi_z$, we have $\mu(v)=\psi_z(v)=\mu(z)$. By Lemma~\ref{lemma:n2adjacentuts}(b), $v\in N(\{u, s\})$.
		If $v\in N(s)$, then $v=z$, which is a contradiction as $z\notin N_2[q]$.
		Thus, $v\in N(u)$, which implies $v=bp(z)$ as $bp(z)$ is the only vertex in $N(u)$ with color $\mu(z)$ under $\mu$.
		However, $\phi(bp(z))=\psi_z(bp(z))=\mu(bn(bp (z)))\neq\mu(z) = \phi(v)$, which is a contradiction.
		
		\smallskip
		\textit{Case 2:} There exists $v\in N_2(a')$ such that $\phi(v)=\phi(a')=\mu(q)$.  		
		
		In this case $a'$ is the only vertex with color $\mu(q)$ under $\phi$ that has a different color under $\psi_z$.
		Since $v\neq a'$, $\psi_z(v)=\phi(v)=\mu(q)$.
		Since $\mu(q)$ is not a color that was recolored to some vertex during the construction of $\psi_z$, we have $\mu(v)=\psi_z(v)=\mu(q)$.
		By Lemma~\ref{lemma:n2adjacentuts}(b), $v\in N(\{u, s\})$. As $\mu(q)\notin \mu(N(s))$ by our assumption, we have
		$v\notin N(s)$.
		So $v\in N(u)$ which implies $v=q$. On the other hand, by the construction of $\phi$, we have $\phi(q)=\mu(z)\neq \mu(q)$, which means $\phi(q)\neq \phi(v)$, which is a contradiction to $v=q$.
	
	\smallskip
	\textit{Case 3:} There exists $v\in N_2(p)$ such that $\phi(v)=\phi(p)=\mu(a')$.  
			
		In this case $p$ is the only vertex with color $\mu(a')$ under $\phi$ that has a different color under $\psi_z$.
		Since $v\neq p$, $\psi_z(v)=\phi(v)=\mu(a')$.
		Since $\mu(a')$ is not a color that was recolored to some vertex during the construction of $\psi_z$, we have $\mu(v)=\psi_z(v)=\mu(a')$. Note that $a'$ is the only vertex in $N_2(p)$ with color $\mu(a')$ under $\mu$,
		which implies that $a'=v$.
		However, $\phi(a')=\mu(q)\neq \mu(a')$, which is a contradiction.
\end{proof}

\subsection{Finale}

Denote by $a_1', a_2', \ldots, a_{k}'$ the vertices in $A'$ and $z_1, z_2, \ldots, z_{\ell}$ the vertices in $D'\cup L$, where $k = |A'|$ and $\ell = |D'\cup L|$.

\smallskip
\textit{Case A:} $k \le \ell$.

In this case, by Lemmas~\ref{lemma:pathd-adj} and~\ref{lemma:qpath}, for each $1 \le i \le k$, we can take a $(\mu,\mu(a_i'),\mu(z_i))$-bichromatic path $P_i$ from $a_i'$ to $\coup(a_i')$. Define $\brset$ to be the family of the following branch sets: each vertex in $N[w]$ is a singleton branch set, each vertex in $F$ is a singleton branch set, and each $V(P_i)$ for $1\le i\le k$ is a branch set.

\smallskip
\textit{Case B:} $\ell < k$.

In this case, by Corollaries \ref{corollary:dadja'}(b) and~\ref{cor:pathq}, for each $1 \le i \le \ell$, we can take a $(\mu,\mu(a'_i),\mu(z_i))$-bichromatic path $P_i$ from $z_i$ to $\coup(z_i)$. Define $\brset$ to be the family of the following branch sets: each vertex in $N[u]\setminus bp(L)$ is a singleton branch set, and each $V(P_i)$ for $1\le i\le \ell$ is a branch set. 

In either case above, the paths $P_1, P_2, \ldots, P_n$ (where $n = \min\{k, \ell\}$) are pairwise vertex-disjoint because the colors of the vertices in $P_i$ and $P_j$ are distinct for $i \neq j$. Therefore, the branch sets in $\brset$ are pairwise disjoint in either case.

\begin{lemma}
\label{lemma:brset-connected}
Each pair of branch sets in $\brset$ are joined by at least one edge in $G^2$.
\end{lemma}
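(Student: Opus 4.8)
The plan is to show that any two branch sets in $\brset$ are adjacent in $G^2$ by considering the types of branch sets involved. There are three kinds of branch set: singletons coming from a clique ($N[w] \cup F$ in Case~A, or $N[u] \setminus bp(L)$ in Case~B), and the paths $V(P_i)$. I would organize the argument into the pairs (singleton, singleton), (singleton, path), and (path, path).

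For two singleton branch sets, the key observation is that in both cases the singletons are drawn from a set that is a clique of $G^2$: in Case~A, $N[w] \cup F$ is a clique of $G^2$ since $u,w,t$ are pairwise adjacent in $G$ and $s\in F$ makes $N[w]\cup F\subseteq N[\{u,w,t,s\}]$-type reasoning applies — more directly, every vertex of $F$ is adjacent in $G$ to both $u$ and $t$, and $w$ is adjacent to $u$ and $t$, so all of $N[w]\cup F$ lie within distance $2$ in $G$; in Case~B, $N[u]$ is literally a clique of $G^2$. So the first case is immediate from the clique property.

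For a singleton $\{x\}$ and a path $V(P_i)$: here I would use the structure of the bichromatic path $P_i$, whose endpoints are a mate pair. In Case~A, $P_i$ runs from $a_i'\in A'$ to $a_i=\coup(a_i')$; I need each such path to touch $N[w]\cup F$. Since $a_i'\in A' \subseteq N(u)$ and $a_i \in A \subseteq N(t)$, both endpoints are adjacent in $G^2$ to $w$ (as $uw, wt \in E(G)$), hence $V(P_i)$ meets $N(w)\subseteq N[w]$. For adjacency to an arbitrary singleton $x\in N[w]\cup F$ I would check that at least one endpoint of $P_i$ lies in $N_2[x]$: $a_i' \in N(u)$ and $a_i\in N(t)$, and every $x\in N[w]\cup F$ is within distance $2$ in $G$ of $u$ and of $t$; combined with Lemma~\ref{lemma:n2adjacentuts}(a), which confines $N_2(A'\cup Q)$ to $N[\{u,t,s\}]$, the relevant adjacencies follow. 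In Case~B, $P_i$ runs from $z_i\in D'\cup L$ to $\coup(z_i)\in D\cup Q'$; since $D' \cup L \subseteq N(s)\cup N(u)$ and $D\cup Q' \subseteq B\cup N(u) \subseteq N[w]$-adjacent sets, a similar endpoint analysis works, using Corollary~\ref{corollary:dadja'}(a) and Corollary~\ref{qadja'} for adjacency to vertices of $A'$-origin singletons when needed, and Lemma~\ref{lemma:q3conn-q1d'} to handle adjacency between a $q\in Q\setminus bp(L)$-type singleton and the path endpoints.

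For two paths $V(P_i)$ and $V(P_j)$ with $i\neq j$: both contain a vertex of $A'$ (namely $a_i', a_j'$ in Case~A) or both contain a vertex of $D'\cup L$ and $Q'$, and the crucial adjacency facts are exactly Corollary~\ref{corollary:dadja'}(a), Corollary~\ref{qadja'}, and Lemma~\ref{lemma:pathd-adj}(a)/\ref{lemma:qpath}: every $a'\in A'$ is adjacent in $G^2$ to every $d'\in D'$ and to every $q\in L$, so $a_i'$ (on $P_i$) is adjacent to $z_j$ (on $P_j$). This gives the edge between $V(P_i)$ and $V(P_j)$. The main obstacle I anticipate is the bookkeeping in the singleton–path case: one must verify, uniformly over the two cases and over all vertex-types appearing among the singletons, that each path $P_i$ has an endpoint landing in the closed second neighborhood of each singleton. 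I expect this to reduce cleanly to Lemmas~\ref{lemma:n2adjacentuts} and~\ref{lemma:q3conn-q1d'} together with the fact that $\{u,w,t,s\}$ form a short path in $G$ so that all the clique vertices and all the path endpoints lie in a common bounded-diameter region, but writing it out requires care to avoid missing a sub-case.
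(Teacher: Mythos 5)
Your overall decomposition matches the paper's: handle singleton--singleton pairs via the clique property, singleton--path pairs via an endpoint analysis, and path--path pairs via the adjacency corollaries, and the ingredients you cite (Lemmas \ref{lemma:n2adjacentuts}, \ref{lemma:q3conn-q1d'}, Corollaries \ref{qadja'}, \ref{corollary:dadja'}) are essentially the right ones. However, the singleton--path step as you wrote it has a genuine gap. You argue that because $a_i'\in N(u)$, $a_i\in N(t)$, and every $x\in N[w]\cup F$ is within distance $2$ in $G$ of both $u$ and $t$, one endpoint of $P_i$ must lie in $N_2[x]$. That inference is not valid: two vertices each within distance $2$ of $u$ can be at distance $4$ in $G$, so ``common bounded-diameter region'' reasoning does not yield $G^2$-adjacency. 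Moreover, appealing to Lemma~\ref{lemma:n2adjacentuts}(a) cannot close this gap, because that lemma bounds $N_2(A'\cup Q)$ \emph{from above} by $N[\{u,t,s\}]$, whereas what you need is a membership claim $x\in N_2[a_i']$ or $x\in N_2[\coup(a_i')]$, i.e.\ an inclusion in the opposite direction. The fact that actually makes the paper's one-line assertion work is stronger than the one you state: every $x\in N(w)\cup F$ is \emph{adjacent} in $G$ to $u$ or to $t$ (run through $\{u,t\}$, $B$, $C$, $F$ using Lemma~\ref{unprocessed} and the definition of $F$), so $x$ shares the $G$-neighbor $u$ with $a_i'$, or the $G$-neighbor $t$ with $\coup(a_i')$, giving the required $G^2$-edge directly.

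There is a second, smaller slip in the path--path step: the sentence ``$a_i'$ (on $P_i$) is adjacent to $z_j$ (on $P_j$)'' is only meaningful in Case~A. In Case~B the path $P_i$ runs from $z_i$ to $\coup(z_i)$ and need not contain $a_i'$ at all, so that argument does not apply. The correct (and simpler) observation for Case~B is that $z_i,z_j\in N(s)$, so they share the $G$-neighbor $s$ and are therefore adjacent in $G^2$; this is what the paper uses.
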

\begin{proof}
Consider Case A first. It is readily seen that $N(w) \cup F$ is a clique of $G^2$. Hence the singleton branch sets in $\brset$ are pairwise adjacent. For $1\le i\le k$, each vertex in $N(w)\cup F$ is adjacent to $a'_i$ or $\coup(a'_i)$ in $G^2$. 
	Hence each singleton branch set is adjacent to each path branch set.
	For $1\le i, j\le k$ with $i \ne j$, we have $a'_j\in N_2[a'_i]$ and thus the branch sets $V(P_i)$ and $V(P_j)$ are joined by at least one edge. 

Now consider Case B. Since $N(u)\setminus bp(L)$ is a clique of $G^2$, the singleton branch sets in $\brset$ are pairwise adjacent.
	All vertices in $N(u)\setminus (bp(L)\cup A' \cup (Q \setminus bp(L)))$ are adjacent to $\coup(z_i)$ in $G^2$ for $1\le i\le \ell$.
	By Corollaries~\ref{qadja'} and~\ref{corollary:dadja'}(a), all vertices in $A'$ are adjacent to $z_i$ in $G^2$ for $1\le i\le \ell$. 
	By Lemma~\ref{lemma:q3conn-q1d'}, all vertices in $Q \setminus bp(L)$ are adjacent to $z_i$ in $G^2$ for $1\le i\le \ell$.
	Hence each singleton branch set is joined to each path branch set by at least one edge.
	Since $z_i\in N(s)$ for $1\le i\le \ell$, the path branch sets are pairwise joined by at least one edge.
\end{proof}

\begin{lemma}
\label{lemma:brsetcolors}
	$|\brset|\ge \chi(G^2)$.
\end{lemma}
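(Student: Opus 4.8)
The plan is to establish the stronger equality $|\brset| = \chi(G^2)$ in both Case~A and Case~B, by evaluating each side in terms of the sizes of the ``layer'' sets $A',B,C,D,F,Q,\dots$ surrounding the pivot $p$.

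First I would record that $\chi(G^2)=|\mu(N_2[p])|$: one inequality is trivial since $\mu$ is optimal, and the reverse is Lemma~\ref{lemma:allcolors}. Then I would unwind $N_2[p]$. Since $p$ is a vertex-child of $uw$ (so $p\in C$), we have $N[p]=\{p,u,w\}$ and hence $N_2[p]=N[u]\cup N[w]$, using $p\in C\subseteq N(u)\cap N(w)$. Plugging in the partitions $N[u]=\{u,w,t\}\cup A'\cup C\cup F\cup Q$ and $N[w]=\{u,w,t\}\cup B\cup C$ together with $B=D\cup Q'$, I get $N_2[p]=\{u,w,t\}\cup A'\cup B\cup C\cup F\cup Q$. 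Since $D\neq\emptyset$, Lemma~\ref{lemma:QQ} gives $\mu(Q)=\mu(Q')\subseteq\mu(B)$, so on the level of colours
\[
\mu(N_2[p]) \;=\; \mu(N[w]\cup F)\cup\mu(A') \;=\; \mu(N[u])\cup\mu(D).
\]

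The crux is that \emph{both} of these are in fact disjoint unions, and that $N[w]\cup F$, $A'$, $N[u]$, $D$ are all cliques of $G^2$, so that the colour counts coincide with the vertex counts and
\[
\chi(G^2) \;=\; |N[w]\cup F|+|A'| \;=\; |N[u]|+|D|.
\]
For $\mu(N[u])\cap\mu(D)=\emptyset$ I would use that $\mu(D)\cap\mu(N(u))=\emptyset$ by the definition of $D$, while $\mu(u)\notin\mu(D)$ since $D\subseteq B\subseteq N(w)$ and $N[w]$ is a clique. For $\mu(N[w]\cup F)\cap\mu(A')=\emptyset$ I would write $\mu(A')=\mu(A)$ and argue: $A\cup\{u,w\}\cup B\cup F$ sits inside the clique $N(t)$ while $A$ is disjoint from $\{u,w\}\cup B\cup F$, so $\mu(A)$ avoids $\mu(\{u,w\}\cup B\cup F)$; it avoids $\mu(t)$ because $A\subseteq N(t)$; and it avoids $\mu(C)$ by the definition of $C'$ together with $A\cap C'=\emptyset$. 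The clique claims are routine ($N[u]$, $D\subseteq N[w]$, $A'\subseteq N(u)$, and $N[w]\cup F$, the last using that $N(w)\cup F$ is a clique and that $w$ is within distance $2$ of every vertex of $F$).

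It remains to count $|\brset|$. In Case~A the singleton branch sets are indexed by the disjoint sets $N[w]$ and $F$, and there are $k=|A'|$ path branch sets, so $|\brset|=|N[w]\cup F|+|A'|=\chi(G^2)$. In Case~B the singletons are indexed by $N[u]\setminus bp(L)$ and there are $\ell=|D'\cup L|$ path branch sets; since $bp(L)\subseteq Q\subseteq N[u]$ with $|bp(L)|=|L|$, and $|D'\cup L|=|D'|+|L|=|D|+|L|$ (the sets $D'$ and $L$ being disjoint and $\coup\colon D\to D'$ a bijection), I get $|\brset|=(|N[u]|-|L|)+(|D|+|L|)=|N[u]|+|D|=\chi(G^2)$. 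Either way $|\brset|=\chi(G^2)$, which in particular gives $|\brset|\ge\chi(G^2)$. The step I expect to be the main obstacle is the colour-disjointness $\mu(A')\cap\mu(N[w]\cup F)=\emptyset$: this is exactly where the technical definitions of $A$, $C'$ and $A'$ do their work, and where one must keep careful track of which of $\{u,w,t\}$, $B$, $C$, $F$ lie inside the clique $N(t)$ and which do not.
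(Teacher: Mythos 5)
Your proposal is correct and follows essentially the same route as the paper: both reduce to the identity $\chi(G^2)=|\mu(N_2[p])|$ and observe that the colors in $\mu(N_2[p])$ split into those realized by the singleton branch sets and those realized by the path branch sets. You establish the sharper fact that $|\brset|=\chi(G^2)$ by verifying the disjointness of $\mu(N[w]\cup F)$ from $\mu(A')$ (resp.\ $\mu(N[u])$ from $\mu(D)$), whereas the paper only asserts the containment $\mu(N_2[p])\setminus\mu(A)\subseteq\mu(N[w]\cup F)$ and settles for the inequality; the extra precision is fine but not needed for the lemma.
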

\begin{proof}
By Lemma~\ref{lemma:allcolors}, all colors used by $\mu$ are present in $\mu(N_2[p])$. In Case A, all colors in $\mu(N_2[p]) \setminus \mu(A)$ are present in $N(w)\cup F$, the set of singleton branch sets in $\brset$. Hence $|\brset| \ge (|N_2[p]|-|\mu(A)|) + k = (\chi(G^2) - k) +  k = \chi(G^2)$. In Case B, all colors in $\mu(N_2[p]) \setminus \mu(D' \cup L)$ are present in $N(u)\setminus bp(L)$, the set of singleton branch sets in $\brset$. Hence $|\brset| \ge (|N_2[p]|-|\mu(D' \cup L)|)+ \ell=(\chi(G^2)-\ell)+\ell = \chi(G^2)$.
\end{proof}

Theorem~\ref{thm:2tree} follows from Lemmas \ref{lemma:brset-connected} and \ref{lemma:brsetcolors} immediately.

\section{Proof of Corollary \ref{coro:2tree}}
\label{sec:coro2trees}

We now prove Corollary \ref{coro:2tree} using Theorem \ref{thm:2tree}. It can be easily verified that if $G$ is a generalized 2-tree with small order, say at most $4$, then $G^2$ has a clique minor of order $\chi(G^2)$ for which each branch set induces a path. Suppose by way of induction that for some integer $n \ge 5$, for any generalized $2$-tree $H$ of order less than $n$, $H^2$ has a clique minor of order $\chi(H^2)$ for which each branch set induces a path. Let $G$ be a generalized $2$-tree with order $n$. If $G$ is a $2$-tree, then by Theorem \ref{thm:2tree}, the result in Corollary \ref{coro:2tree} is true for $G^2$. Assume that $G$ is not a $2$-tree. Then at some step in the construction of $G$, a newly added vertex $v$ is made adjacent to a single vertex $u$ in the existing graph. (Note that $v$ may be adjacent to other vertices added after this particular step.) This means that $u$ is a cut vertex of $G$. Thus $G$ is the union of two edge-disjoint subgraphs $G_1, G_2$ with $V(G_1)\cap V(G_2) = \{u\}$. Since both $G_1$ and $G_2$ are generalized $2$-trees, by the induction hypothesis, for $i=1,2$, $G_i^2$ has a clique minor of order $\chi(G_i^2)$ for which each branch set induces a path. It is evident that $G^2$ is the union of $G_1^2$, $G_2^2$ and the clique induced by the neighborhood $N_G(u)$ of $u$ in $G$. 
 
Denote $N_i = N_{G_i}(u)$ for $i=1,2$. Then in any proper coloring of $G_i^2$, the vertices in $N_i$ need pairwise distinct colors. Without loss of generality we may assume $\chi(G_1^2) \le \chi(G_2^2)$. If $|N_G(u)| = |N_1|+|N_2| \le \chi(G_2^2) - 1$, then we can color the vertices in $N_1$ using the colors that are not present at the vertices in $N_2$ in an optimal coloring of $G_2^2$. Extend this coloring of $N_1$ to an optimal coloring of $G_1^2$. One can see that we can further extend this optimal coloring of $G_1^2$ to obtain an optimal coloring of $G^2$ using $\chi(G_2^2)$ colors. Thus, if $|N_G(u)| \le \chi(G_2^2) - 1$, then $\chi(G^2) = \chi(G_2^2)$. Moreover, the above-mentioned clique minor of $G_2^2$ is a clique minor of $G^2$ with order $\chi(G^2)$ for which each branch set induces a path. On the other hand, if $|N_G(u)| \ge \chi(G_2^2)$, then one can show that $\chi(G^2) = |N_G(u)|$ and $N_G(u)$ induces a clique minor of $G^2$, with each branch set a singleton. In either case we have proved that $G^2$ has a clique minor of order $\chi(G^2) = \max\{\chi(G_1^2), \chi(G_2^2), |N_G(u)|\}$ for which each branch set induces a path. This completes the proof of Corollary \ref{coro:2tree}. 
 
\section{Concluding remarks} 
\label{sec:rem}

\begin{figure}
	\centering
	\includegraphics[scale=0.3]{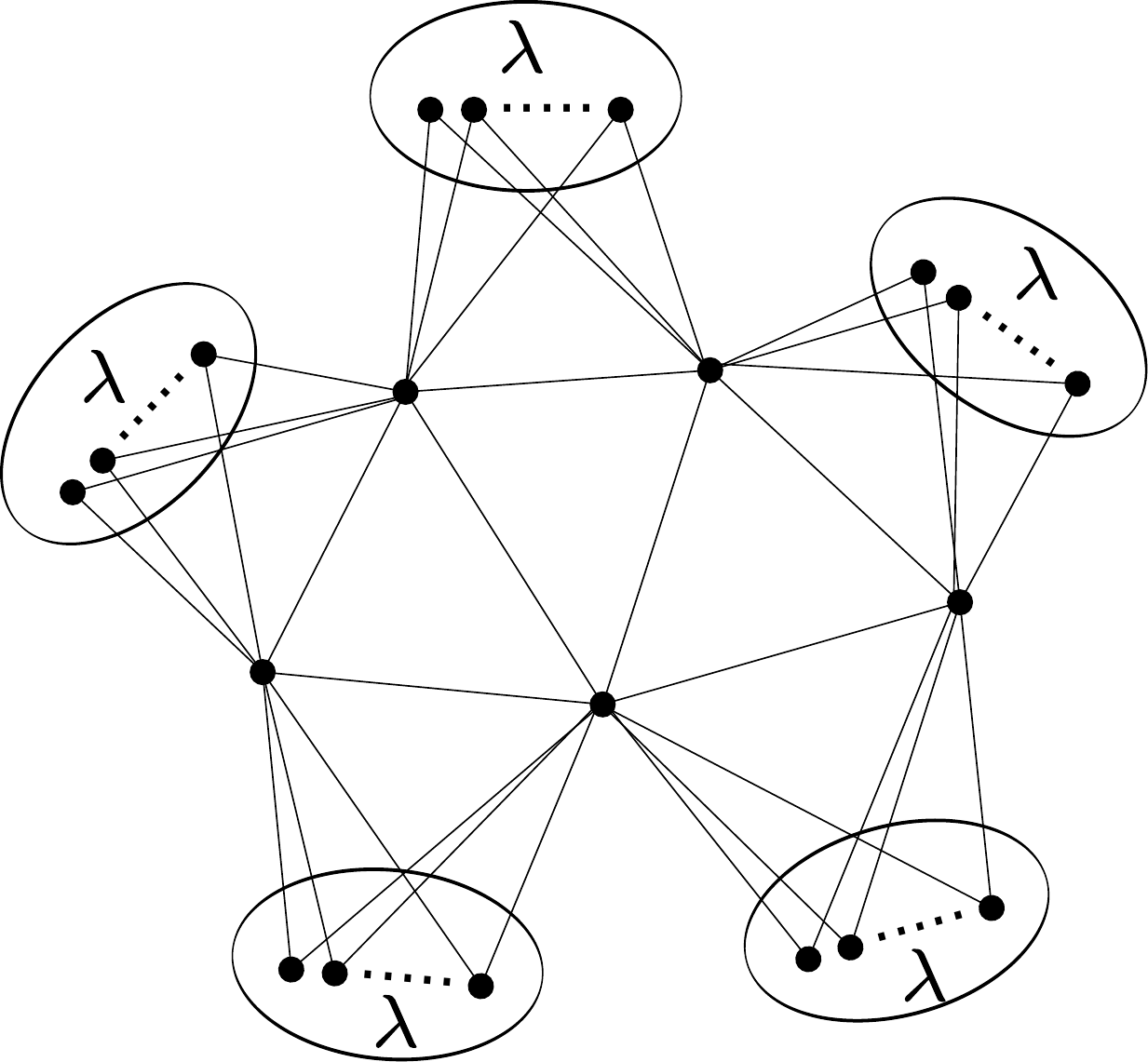}
	\caption{A 2-tree $G$ with $\omega(G^2)=2\lambda+5$ and $\chi(G^2)=3\lambda+3$.} 
	\label{fig:counter-eg}
\end{figure}

We have proved that for any $2$-tree $G$, $G^2$ has a clique minor of order $\chi(G^2)$. Since large cliques played an important role in our proof of this result, it is natural to ask whether $G^2$ has a clique of order close to $\chi(G^2)$, say, $\omega(G^2) \ge c \chi(G^2)$ for a constant $c$ close to $1$ or even $\omega(G^2) = \chi(G^2)$. Since the class of $2$-trees contains all maximal outerplanar graphs, this question seems to be relevant to Wegner's conjecture \cite{W}, which asserts that for any planar graph $G$ with maximum degree $\Delta$, $\chi(G^2)$ is bounded from above by $7$ if $\Delta=3$, by $\Delta + 5$ if $4\le\Delta\le 7$, and by $(3\Delta/2) + 1$ if $\Delta\ge 8$. For $\Delta=3$, this conjecture has been proved by Thomassen in \cite{THOMASSEN2017}. In the case of outerplanar graphs with $\Delta=3$, a stronger result holds as shown by Li and Zhou in \cite{LZ}. In \cite{Lih2003303}, Lih, Wang and Zhu proved that for any $K_4$-minor free graph $G$ with $\Delta\geq 4$, $\chi(G^2)\le (3\Delta/2) + 1$. Since $2$-trees are $K_4$-minor free, this bound holds for them. Combining this with $\omega(G^2)\ge \Delta(G)$, we then have $\omega(G^2) \ge 2(\chi(G^2)-1)/3$ for any $2$-tree $G$. It turns out that the factor $2/3$ here is the best one can hope for: In Figure~\ref{fig:counter-eg}, we give a $2$-tree whose square has clique number $2\lambda+5$ and chromatic number $3\lambda+3$. 
 
In view of Theorem \ref{thm:2tree}, the obvious next step would be to prove Hadwiger's conjecture for squares of $k$-trees for a fixed $k\ge 3$. Since squares of $2$-trees are 2-simplicial graphs, another related problem would be to prove Hadwiger's conjecture for the class of 2-simplicial graphs or some interesting subclasses of it. It is also interesting
to work on Hadwiger's conjecture for squares of some other special classes of graphs such as planar graphs.

\vspace{-0.1in}

\begin{small}
\bibliographystyle{plain}
\bibliography{hadwiger}
\end{small}

\end{document}